\newtheorem{theorem}{Theorem}[section]
\newtheorem{definition}[theorem]{Definition}
\newtheorem{proposition}[theorem]{Proposition}
\newtheorem{corollary}[theorem]{Corollary}
\theoremstyle{definition}
\newtheorem{remark}[theorem]{Remark}
\newtheorem{notation}[theorem]{Notation}
\newtheorem{hypothesis}[theorem]{Hypothesis}
\newtheorem{conclusion}[theorem]{Conclusion}
\newcommand\numberthis{\addtocounter{equation}{1}\tag{\theequation}}
\def\e{\mathfrak{e}}
\def\R{\mathbb{R}} 
\def\Rd{\mathbb{R}^d} 
\def\Z{\mathbb{Z}} 
\def\Zd{\mathbb{Z}^d} 
\def\Sb{\mathbb{S}}
\def\Co{{\mathbb C}} 
\def\Op{\mathfrak{Op}} 
\def\X{\mathcal X}
\def\U{\mathcal{U}}
\def\cL{\mathcal{L}}
\def\H{\mathcal{H}}
\def\E{\mathcal E}
\def\T{\mathcal{Q}}
\def\id{{\rm id\hspace*{-1.5pt}l}}
\def\bb1{{\rm{1}\hspace{-3pt}\mathbf{l}}}
\def\Ie0{[-\epsilon_0,\epsilon_0]}
\def\Id{\text{I}\hspace*{-2pt}\text{I{\sf d}}}
\def\Int{\mathfrak{I}\mathit{nt}}
\def\supp{\mathop{\rm supp} \nolimits}
\def\p{\mathfrak{p}}
\def\s{\mathfrak{s}}
\def\bz{\mathfrak{z}\hspace*{-4pt}\mathfrak{z}}
\def\dd{\mathfrak{d}}
\def\plus2{\hat{+}_{\hspace*{-2pt}_{2\Gamma}}}
\def\z{\mathfrak{z}}
\def\zz{\mathcal{z}}
\def\BC2{\mathbb{B}\big(\mathbb{C}^2\big)}
\def\repi{\rightY\hspace*{-4pt}\rightarrow}
\def\L2T{L^2(\mathbb{T})}
\def\bigprod{{\text{\large{$\prod$}}}}
\def\tgamma{\tilde{\gamma}}
\def\tB{\widetilde{B}}
\def\tA{\widetilde{A}}
\def\tmO{\widetilde{\mO}}
\def\tbz{\tilde{\bz}}
\def\Nb{\mathbb{N}_{\bullet}}
\def\beq{\begin{equation}}
\def\eeq{\end{equation}}
\def\tf{\widetilde{f}}
\def\tg{\widetilde{g}}
\def\F0{\mathlarger{\mathlarger{\mathbf{\Lambda}}}}
\def\Fb{\mathlarger{\mathlarger{\mathbf{\Lambda}}}_{\text{\tt bd}}}
\def\Fp{\mathlarger{\mathlarger{\mathbf{\Lambda}}}_{\text{\tt pol}}}
\def\z{\mathfrak{z}}
\def\G{\mathcal{G}}
\def\supp{\text{\tt supp}}
\def\Sd{\mathbb{S}^d}
\def\mbz{\mathring{\bz}}
\def\mf{\mathring{f}}
\def\mO{\mathfrak{O}}
\def\mI{\mathcal{I}}
\numberwithin{equation}{section}
\let\oldllcorner\llcorner
\renewcommand{\llcorner}[1][0pt]{%
  \mathrel{\raisebox{#1}{\text{\LARGE{$\oldllcorner$}}}}}
\def\lnu{\mathlarger{\nu}}
\title{The fibre operators in the Bloch-Floquet decomposition\\of periodic magnetic pseudo-differential operators}
\author{Horia D. Cornean\footnote{Department of Mathematical Sciences, Aalborg University, Thomas Manns Vej 23, 9220 Aalborg, Denmark; cornean@math.aau.dk, ORCID iD 0000-0003-2700-8785}, Bernard Helffer\footnote{Laboratoire de Math{\'e}matiques Jean Leray,  Nantes Universit{\'e}  and CNRS, Nantes, France;
Bernard.Helffer@univ-nantes.fr, ORCID iD 0009-0004-4238-015X`}, Radu Purice\footnote{\enquote{Simion Stoilow} Institute of Mathematics of the Romanian Academy, P.O. Box 1-764, 014700 Bucharest, Romania; Radu.Purice@imar.ro, ORCID iD 0000-0002-9012-7982}}
\begin{document}
\maketitle

\begin{abstract}
We study the structure of the fibre operators corresponding to  periodic magnetic pseudo-differential operators having periodic magnetic potentials. We obtain explicit formulas for their
distribution kernel, both when these fibres are seen as operators on the $d$-dimensional torus,
and also when they are seen as infinite matrices acting on a discrete $\ell^2$ space via a 
discrete Fourier transform. Moreover, using these distribution kernels we prove that the fibre
operators are toroidal pseudo-differential operators.
\end{abstract}

%\tableofcontents

\section{Introduction}

It  is well-known that any self-adjoint operator in $L^2(\Rd)$ that is left invariant by translations with elements of the subgroup $\Zd $ is unitarily equivalent to a direct integral of fibre operators acting on the quotient $\Rd/\Zd$ that is diffeomorphic to the $d$-dimensional torus. An interesting question is to understand the structure of these fibre operators when dealing with \textit{periodic pseudo-differential operators} and mainly with \textit{periodic magnetic pseudo-differential operators}. While, for periodic differential operators the situation has been thoroughly studied (see for example \cite{Ku-1, Ku-2} and references therein), the case of pseudo-differential operators is only mentioned in \cite{Ku-1} and some results are given in \cite{HM,Mo}. Meanwhile, as our analysis in \cite{CHP-1}-\cite{CHP-4} has shown, a good understanding of the structure of these fibre operators may be necessary when dealing with several types of problems and we devote this note to this subject.

{
Let us also mention the connection with other general versions of pseudo-differential calculus on different classes of groups \cite{P,BB,MR,FRR}. The construction of a ``symmetric Weyl calculus" still remains of interest even for Pontryagin dual pairs of abelian groups, see for example  \cite{W} and notice that the case of the torus, in which we are interested, is not covered, being outside the class of ``odd groups".}

\subsection{Some notations}\label{SS-notations}
\begin{enumerate}[{\rm a]}]
	\item $\Nb:=\mathbb{N}\setminus\{0\}$
	\item For any real number $t\in\R$ we define its integer  part 
	$
	\lfloor t\rfloor:=\max\{k\in\Z\,,\,k\leq t\}\in\Z\,.$
	\item Given any element $a$ in a normed space, we shall use the notation 
	$
	<a>:=\sqrt{1+|a|^2}\,.
	$
	\item Given two linear topological spaces $\mathcal{T}_j$ (with $j=1,2$) we denote by $\mathcal{L}\big(\mathcal{T}_1;\mathcal{T}_2\big)$ the space of continuous linear operators $\mathcal{T}_1\rightarrow\mathcal{T}_2$, endowed with the topology of uniform convergence on bounded subsets.
	\item Given a complex Hilbert space $\mathcal{H}$ we shall denote by $\mathbb{B}(\mathcal{H})$ the $C^*$-algebra of bounded linear operators on $\mathcal{H}$ and by $\mathbb{U}\big(\mathcal{H}\big)$ the group of unitary operators on $\mathcal{H}$.
	\item Given a linear continuous operator $T\in\mathcal{L}\big(\mathscr{S}(\X);\mathscr{S}^\prime(\X)\big)$ (remark that any $T\in\mathbb{B}\big(L^2(\X)\big)$ defines by restriction such an operator), by the nuclear theorem it admits a distribution kernel $\mathfrak{K}_T\in\mathscr{S}^\prime(\X\times\X)$ such that $\big(\psi,T\phi\big)_{L^2(\X)}=\big\langle\mathfrak{K}_T\,,\,\overline{\psi}\otimes\phi\big\rangle_{\mathscr{S}(\X\times\X)}$ and we write that $T=\Int_\X\mathfrak{K}_T$.
	\item Given $N\in\mathbb{N}\setminus\{0\}$ we denote by $\underline{N}:=\{j\in\mathbb{N}\setminus\{0\},\ 1\leq j\leq N\}$. Given an algebra $\mathcal{A}$, for any $a\in\mathbb{N}^N$ and any $\mathbf{A}=(A_1,\ldots,A_N)\in\mathcal{A}^N$ we write:
	$$\mathbf{A}^a\,:=\,\underset{1\leq n\leq N}{\bigprod}A_n^{a_n};\qquad|a|\,:=\,\underset{1\leq n\leq N}{\sum}a_n.$$
	\item For each $j\in\{1,\ldots,d\}$ we define the multi-index $\varepsilon_j\in\mathbb{N}^d$ with components $[\varepsilon_j]_l:=\delta_{jl}$.
\end{enumerate}

\subsection{The framework} 

We work in dimension $d\geq2$ and denote by $\X\cong\Rd$ the \textit{configuration space} with the canonical orthonormal basis $\big\{\e_1,\ldots,\e_d\big\}$ and an associated regular lattice $\Gamma:=\underset{1\leq j\leq d}{\bigoplus}\Z\e_j\subset\X$. In order to easily distinguish the configuration space from the \textit{momentum space} defined as its dual, we shall use the notation $\X\cong\R^d$ for the first one and $\X^*\cong\Rd$ for the second one. Then $\Xi:=\X\times\X^*$ is the phase space and we use notations of the form $X:=(x,\xi)\in\Xi$, $Y:=(y,\eta)\in\Xi$, ... . Let  $\Gamma_*:=\underset{1\leq j\leq d}{\bigoplus}\Z\e^*_j\subset\X^*$ be the \textit{dual lattice} generated by \textit{the dual basis} $\{\e^*_j,\ 1\leq j\leq d\}\subset\X^*$ defined by $<\e^*_j\,,\,\e_k>=(2\pi)\delta_{j,k}$. We choose to simplify some formulas by inserting the factor $2\pi$ in the duality map $<\cdot,\cdot>:\X^*\times\X\rightarrow\Co$ and thus for a decomposition $\xi=\underset{1\leq j \leq d}{\sum}\xi_j\e^*_j$ we have the equality $<\xi,x>=2\pi\underset{1\leq j \leq d}{\sum}\xi_j\,x_j$. This leads to some modifications in the formula of the Fourier unitary operators and the fact that the elementary cells in $\Gamma$ and $\Gamma_*$ have volume 1. We shall denote by $\tau_x$ for $x\in\X$ and $\tau_\xi$ for $\xi\in\X^*$ the usual translation operators with $x\in\X$ and resp. $\xi\in\X^*$ on the given linear spaces. We shall use the standard notations for test functions and distributions on $\Rd$. We shall denote by $\mathscr{E}_{\Gamma}(\Rd):=\big\{F\in\mathscr{E}(\Rd)\,,\,F\circ\tau_\gamma=F,\,\forall \gamma\in\Gamma\big\}$, as a subspace of $BC^\infty(\X)$ with the induced Fr\'{e}chet topology.

\begin{definition}\label{D-G-psrtunit}
We call  \emph{$\Zd$-partition of unity} on $\Rd$, any family $\big\{\rho\circ\tau_{\gamma}\big\}_{\gamma\in\Zd}$, where the function $\rho\in C^\infty_0(\Rd;\mathbb{R}_+)$ satisfies the relation $\underset{\gamma\in\Zd}{\sum}\rho(x+\gamma)=1$ for any $x\in\Rd$. Having in view the two lattices $\Gamma\cong\Zd$ in $\X$ and $\Gamma_*\cong\Zd$ in $\X^*$, we shall work with $\Gamma$-partitions of unity on $\X$ and $\Gamma_*$-partitions of unity on $\X^*$.
\end{definition}
We recall that given any abelian locally compact group, its \textit{Pontryagin dual} is the family of its irreducible unitary representations (and thus 1-dimensional) that has a natural topological group structure (see \cite{Fo-AHA}) and also the well-known identification of the dual $[\Rd]^*$ with the Pontryagin dual $\widehat{\Rd}$ given by
 $[\Rd]^*\ni\xi\overset{\sim}{\longrightarrow}\chi_{\xi}\in\widehat{\Rd}\,, \mbox{  with } \chi_{\xi}(x):=e^{-i<\xi,x>}  \mbox{ for } x\in\Rd$.
 We use the symbol $\overset{\sim}{\longrightarrow}$ for bijective maps.
 
 We recall that \textit{a Weyl projective representation} associated with  the pair $(\X,\X^*)$ is a map $W:\X\times\X^*\rightarrow\mathbb{U}(\H)$ defined on a Hilbert space $\H$ such that the restrictions $W(x,\cdot):\X^*\rightarrow\mathbb{U}(\H)$ and $W(\cdot,\xi):\X\rightarrow\mathbb{U}(\H)$ are strongly continuous unitary representations for any $x\in\X$, resp. any $\xi\in\X^*$ and verifying the commutation relations:
\beq\label{F-10}
W(x,\xi)W(y,\eta)=e^{i[<\xi,y>-<\eta,x>]}W(y,\eta)W(x,\xi),\quad\forall\big((x,\xi),(y,\eta)\big)\in(\X\times\X^*)^2.
\eeq
We work with the \textit{Schr\"{o}dinger projective representation} associated with  $(\X,\X^*)$ which is defined on $\H=L^2(\X)$ by the formula:
\beq\label{F-W-R-prepr}
\big(W_{\X}(z,\zeta)f\big)(x):=e^{(i/2)<\zeta,z>}\,e^{-i<\zeta,x+z>}\,f(x+z),\quad\forall{f}\in{L}^2(\X).
\eeq

The functional calculus associated with this representation of $(\X,\X^*)$ is the well-known \textit{Weyl calculus} (see \cite{Ho-3}) which  is a symmetric pseudo-differential calculus (i.e. taking complex conjugate symbols to adjoint operators):
\beq\begin{split}\label{D-Op}
	\Op:\ &\mathscr{S}(\Xi)\rightarrow\mathcal{L}\big(\mathscr{S}^\prime(\X);\mathscr{S}(\X)\big),\\
	\big(\Op(\Phi)\psi\big)(x)&=\int_{\X}\hspace*{-0.2cm}dy\int_{\X^*}\hspace*{-0.3cm}d\zeta\,e^{i<\zeta,x-y>}\,\Phi\big((x+y)/2,\zeta\big)\,\psi(y),\ \forall\psi\in\mathscr{S}(\X)\,,
\end{split}\eeq
where the formula in the second line may be extended to $\psi\in\mathscr{S}^\prime(\X)$ by duality.

\paragraph{The pair of $d$-dimensional tori}
Let us introduce the circle  $\Sb:=\{\z\in\Co,\ |\z|=1\}$ with its structure of an abelian Lie group with the multiplication induced from $\Co$ and the usual normalized Haar measure (giving it the measure 1). Then we have the isomorphism $\R/\Z\cong\Sb$ and the canonical quotient projection $\R\ni{t}\mapsto\mathfrak{e}(t):={e^{2\pi{i}t}}\in\Sb$. We  always identify $\Zd$-periodic functions on $\Rd$ with functions defined on $\Rd/\Zd$ and thus on the $d$-dimensional torus. Let us introduce the following two objects playing a central role in our work, namely the two quotient groups isomorphic to $d$-dimensional tori: 
\beq\label{D-T-Tstar}
\T:=\X/\Gamma\cong\Sb^d,\qquad\T_*:=\X^*/\Gamma_*\cong\Sb^d
\eeq
where the isomorphisms are defined by the two canonical orthonormal basis $\{\e_j\}_{j=1}^d\subset\X$ and $\{\e^*_j\}_{j=1}^d\subset\X^*$. We shall work with the quotient projections $\p:\X\repi\T$ and $ \p_*:\X^*\repi\T_*$ and 
 we shall denote the identity in $\T_*$ by:
 \beq\label{DF-1}
\mathfrak{i}^*\in\T_*:\quad\mathfrak{i}^*\bz^*=\bz^*,\ \forall\bz^*\in\T_*.
\eeq

 The Lie group structure of $\Sb^d$ implies the existence of a set of $d$ commuting invariant differential operators. On $\T\cong\Sb^d$ we will denote them by $\mathring{\partial}_j$ with $1\leq j\leq d$, with $\mathring{\Delta}:=\underset{1\leq j\leq d}{\sum}[\mathring{\partial}_j]^2$. They are chosen in order to satisfy the identities: 
 \beq\label{DF-T-deriv}
 \partial_j(\varphi\circ\p)=(\mathring{\partial}_j\varphi)\circ\p,\qquad\forall\varphi\in{C}^\infty(\T).
 \eeq

Let us consider the unit cells $\E:=\{\hat{x}\in\X,\ \hat{x}_j\in[-1/2,1/2),\,1\leq j\leq d\}\subset\X$ and $\E_*:=\{\hat{\xi}\in\X^*,\ \hat{\xi}_j\in[-1/2,1/2),\,1\leq j\leq d\}\subset\X^*$and the 'local' inverses $\s:\T\rightarrow\E$ and $\s_*:\T_*\rightarrow\E_*$ verifying $\p\circ\s=\id_{\T}$ and $\p_*\circ\s_*=\id_{\T_*}$. We use the notation $\bz:=(\z_1,\ldots\z_d)\in\T$ and $\bz^*:=(\z^*_1,\ldots,\z^*_d)\in\T_*$ for the group elements. Finally we shall denote by $d\bz$ and $d\bz^*$ the normalized Haar measures on $\T$ and respectively on $\T_*$ noticing that they are in fact equal with the inverse measure of $d\hat{x}$ on $\E$ through $\s$ and resp of $d\hat{\xi}$ on $\E_*$ through $\s_*$. For the characters we shall use the identifications: $\T_*\ni\bz^*\mapsto\chi_{\bz^*}\in\widehat{\Gamma}$ and $\Gamma\ni\gamma\mapsto\hat{\chi}_{\gamma}\in\widehat{\T_*}$  with:
\beq \label{DF-char-zst}
\hat{\chi}_{\bz^*}(\gamma):=(\bz^*)^{\gamma}\,, \, \hat{\chi}_{\gamma}(\bz^*)= (\bz^*)^{-\gamma}\,,\quad\forall(\bz^*,\gamma)\in\T_*\times\Gamma.
\eeq

\paragraph{The magnetic field.} Let $\Fb^p(\X)$ the space of smooth p-forms on $\X$ that have all the components of class $BC^\infty(\X)$ and by $\Fp^p(\X)$ the space of smooth p-forms on $\X$ that are of class $C^\infty_{\text{\tt pol}}(\X)$. A \textit{regular magnetic field} on $\X$ is described as \textit{a closed 2-form} $B\in\Fb^2(\X)$. Due to the contractibility of the space $\X$, $B$ is also exact and thus there exists a 1-form $A\in\Fp^1(\X)$ such that $B=dA$. Then the magnetic pseudo-differential calculus (\cite{MP-1,IMP-1}-\cite{IMP-3}) proposes the definition of a twisted quantization associated with  the vector potential $A\in\Fp^1(\X)$:
\beq\begin{split}\label{D-OpA}
	&\Op^A:\ \mathscr{S}(\Xi)\rightarrow\mathcal{L}\big(\mathscr{S}^\prime(\X);\mathscr{S}(\X)\big),\qquad\\
	&\big(\Op^A(\Phi)\psi\big)(x)=\int_{\X}\hspace*{-0.2cm}dy\int_{\X^*}\hspace*{-0.3cm}d\zeta\,\Lambda^A(x,y)\,e^{i<\zeta,x-y>}\,\Phi\big((x+y)/2,\zeta\big)\,\psi(y),\ \forall\psi\in\mathscr{S}(\X)\,,\\
	&\Lambda^A(x,y):=\exp\Big(-i\int_{[x,y]}A\Big)\,,
\end{split}\eeq
with a similar extension to $\mathscr{S}^\prime(\X)$ by duality.  

The map $\Op$ in \eqref{D-Op} equals the map $\Op^0$ in \eqref{D-OpA}. The main difficulty in implementing the above 'magnetic' quantization, comes from the fact that for a magnetic field that does not vanish at infinity, as for example for any non-zero constant magnetic field, the vector potential is growing at infinity and thus the prescription in \eqref{D-OpA} does not fit well in the frame of an usual Weyl calculus with modified symbols. In this paper we shall deal only with a class of periodic magnetic fields that admit periodic vector potentials and as a consequence belong to $\Fb^1(\X)$ (see Theorem \ref{L-Abounded}) but we have in view further applications to situations with superpositions of different magnetic fields.

	In \cite{IMP-1} it is proven that for a closed 2-form $B\in\Fb^2(\X)$ the operator $\Op^A:\mathscr{S}(\Xi)\rightarrow\cL \big(\mathscr{S}^\prime(\X);\mathscr{S}(\X)\big)$ extends to a continuous map: $S^p_1(\Xi)\rightarrow\cL \big(\mathscr{S}(\X)\big)$ for any $p\in\R$. Its distribution kernel is given by the formula:
	\beq\label{DF-DistrKer}
	\mathfrak{K}^A[F]\,=\,\Lambda^A\,\big[\big(\bb1_{\X}\otimes\mathcal{F}_{\X^*}\big)F\big]\circ\Upsilon
	\eeq
	where $\Upsilon:\X\times\X\rightarrow\X\times\X$ is the bijective linear transformation $\Upsilon(x,y):=\big((x+y)/2,x-y\big)$ having the inverse $\Upsilon^{-1}(z,v)=(z+v/2,z-v/2)$ and Jacobian 1 and $\mathcal{F}_{\X^*}:L^1(\X^*)\rightarrow{C}(\X)$ is the inverse Fourier transform that we recall below. We notice  that we have the equality: $$\mathfrak{K}^A[F]\,=\,\Lambda^A\,\mathfrak{K}^0[F] \mbox{ (with pointwise multiplication) } $$ and we shall simply denote by $\mathfrak{K}[F]\equiv\mathfrak{K}^0[F]$.

\paragraph{The Fourier transform.}  We use the following conventions, recalling our previous remark concerning the $2\pi$ factor in the duality map:
\begin{align}
&\mathcal{F}_{\X}:\ \mathscr{S}(\X)\rightarrow\mathscr{S}(\X^*),\quad\big(\mathcal{F}_{\X}\varphi\big)(\xi)\,:=\,\int_{\X}dx\,e^{-i<\xi,x>}\,\varphi(x)\,,\forall\varphi\in\mathscr{S}(\X)\,;\\
&\mathcal{F}_{\X^*}:\ \mathscr{S}(\X^*)\rightarrow\mathscr{S}(\X),\quad\big(\mathcal{F}_{\X^*}\varphi\big)(x)\,:=\,\int_{\X^*}d\xi\,e^{i<\xi,x>}\,\varphi(\xi)\,,\forall\varphi\in\mathscr{S}(\X^*)\,;\\ \label{DF-Ftr-Gamma}
&\mathcal{F}_{\Gamma}:\ \ell^1(\Gamma)\rightarrow{C}(\T_*),\quad\big(\mathcal{F}_{\Gamma}\vec{f}\big)(\bz^*)\,:=\,\underset{\gamma\in\Gamma}{\sum}e^{-i<\s_*(\bz^*),\gamma>}\,\vec{f}_{\gamma}\,, \forall\vec{f}\in\ell^1(\Gamma)\,;\\ \label{DF-Ftr-Tstar}
&\mathcal{F}_{\T_*}:\ C^\infty(\T_*)\rightarrow\ell^1(\Gamma),\quad\big(\mathcal{F}_{\T_*}\varphi\big)_{\gamma}\,:=\,\int_{\T_*}d\bz^*\,e^{i<\s_*(\bz^*),\gamma>}\,\varphi(\bz^*)\,, \forall\varphi\in{C}^\infty(\T_*)\,.
\end{align}

\paragraph{The H\"{o}rmander type symbols.} We shall mainly work with smooth tempered distributions on $\Xi$ that are \textit{H\"{o}rmander symbols of class} $S^p_1(\Xi)$ for some $p\in\R$, i.e. functions $F\in{C}^\infty(\Xi)$ verifying the estimations:
\beq\begin{split}
	\forall(n,m)\in\mathbb{N}\times\mathbb{N},\ &\text{ there exists } C_{n,m}>0,\ \text{such that:}\\
	&\underset{a\in\mathbb{N}^d,|a|\leq{n}}{\max}\ \underset{b\in\mathbb{N}^d,|b|\leq{m}}{\max}\ \underset{(x,\xi)\in\Xi}{\sup}\ <\xi>^{-p+m}\big|\big(\partial_x^a\partial_{\xi}^bF\big)(x,\xi)\big|\,\leq\,C_{n,m}\,.
\end{split}\eeq
The infimum of the possible constants $C_{n,m}>0$ for a given pair $(n,m)$ define a countable family of seminorms and thus a Fr\'{e}chet topology on $S^p_1(\Xi)$. \\
We say that a symbol $F\in{S}^p_1(\Xi)$ with $p>0$ \textit{is elliptic} when there exist $C>0$ and $R>0$ such that $\big|F(x,\xi)\big|\geq{C^{-1}}<\xi>^p$ for any $(x,\xi)\in\Xi$ with $|\xi|\geq{R}$. \\
For any $s\in\R$ we introduce the symbol $\mathfrak{m}_s(x,\xi)=<\xi>^s$, elliptic for $s>0$.

Applying Lemma 1.3.5 and Proposition 1.3.6 from \cite{ABG} to Formula \eqref{DF-DistrKer} we obtain the following statement:

\noindent\textbf{Theorem A.} \textit{For $B\in\Fb^2(\X)$ a closed 2-form, $A\in\Fp^1(\X)$ with $B=dA$, and  $F\in{S}^p_1(\Xi)$ with  $p\in\R$,  the operator $\Op^A(F)$ 
 leaves $\mathscr{S}(\X)$ invariant and its distribution kernel $\mathfrak{K}^A(F)\in\mathscr{S}^\prime(\X\times\X)$ has singular support in $\Delta_\X:=\{(x,x)\in\X\times\X,\ x\in\X\}$ and rapid off-diagonal decay. Moreover, for $p<0$ the kernel has integrable modulus as function of the variable $x-y\in\X$ orthogonal to $\Delta_{\X}$.}

We also recall Theorem 2.7 in \cite{IMP-3}.

\noindent\textbf{Theorem B.} \textit{If $B\in\Fb^2(\X)$ is a closed 2-form, $A\in\Fp^1(\X)$ with $B=dA$, and  $F\in{S}^p_1(\Xi)$ with $p>0$ is positive and elliptic, then $\Op^A(F)$ is essentially self-adjoint on $\mathscr{S}(\X)$  and the domain of its  closure is:
\beq
\mathcal{D}\big(\overline{\Op^A(F)}\big)=\big\{f\in{L}^2(\X),\ \Op^A(\mathfrak{m}_p)f\in{L}^2(\X)\big\}=:\mathscr{H}^p_A(\X).
\eeq}

\subsection{A brief summary of our main results}
	
Our main interest in this paper is to study the structure of the fibre operators of periodic magnetic pseudo-differential operators in their decomposition in the direct integral defined by the Bloch-Floquet transformation. 
\begin{itemize}
\item An essential starting point of our analysis is Theorem  \ref{L-Abounded} which shows that for periodic magnetic fields admitting {\it periodic vector potentials}, the magnetic symmetric quantization as developed in \cite{MP-1,IMP-1}-\cite{IMP-3} and the usual Weyl quantization (see \cite{Ho-3}) are equivalent. We do this by constructing the ``usual" Weyl symbol starting from the ``magnetic" one. 
In other words, the results proved in Theorem \ref{L-Abounded} and Proposition \ref{C-L} show that for such ``zero mean flux" periodic magnetic fields it is enough to study the Bloch-Floquet theory for periodic Weyl operators. This does not apply to magnetic fields with an ``incommensurate" non-zero mean flux. As a large number of applications deal with superpositions of different magnetic fields, a unified treatment is necessary and  Proposition \ref{C-L} opens the way to such procedures. 

\item In Section \ref{S-2} we present the extension to pseudo-differential operators of the Bloch-Floquet theory, well-known in the case of periodic differential operators, emphasizing the case of unbounded elliptic H\"{o}rmander type symbols (see Propositions \ref{P-Fzst-inv}, \ref{P-PNf-2} and \ref{P-PNf-3} and Remark \ref{R-PNf-1}).

\item Theorem \ref{T-main} is also one of our main results. There we obtain explicit formulas for the distribution kernel of the fibre operators as operators on the $d$-dimensional torus, and for the infinite matrices associated with  these operators on $\ell^2(\Gamma_*)$ via an inverse discrete Fourier transform. Using the distribution kernels we prove that the fibre operators are toroidal pseudo-differential operators in the sense of \cite{RT-06, RT-10, McL-91} and are exactly the pseudo-differential operators on the torus obtained by replacing the differential canonical operators on $\Rd$ by the invariant differential operators on the torus as in \eqref{DF-T-deriv}. 

\item Another main objective of this paper is to construct a \textit{symmetric Weyl calculus}, similar to the usual Weyl calculus on $\Rd$, associated to the $d$-dimensional torus as abelian Lie group, (case not covered by the results in \cite{W}) and prove that in the Bloch-Floquet representation the fibres of any periodic Weyl pseudo-differential operator are such symmetric toroidal operators. This is done in Section \ref{S-3}. In fact, as we conclude in our final Remark \ref{R-Final}, we may view the Bloch-Floquet representations of these periodic Weyl operators as a kind of Weyl calculus associated to the principal bundle $\X\repi\T$. This is the ``exact" (unperturbed) version of the functional calculus associated to what the authors call ``building block kinetic operators" in \cite{dNL-11}. We also discuss the connection of this Weyl pseudo-differential calculus on the $d$-dimensional torus with the usual approaches appearing in the literature. 
\end{itemize}

{
Let us end this brief description of our results by emphasizing that one may  extend our analysis to magnetic fields which, instead of obeying a zero flux condition as in \eqref{H-Bper},  are allowed to have a rational flux through the unit cell $\E\subset\X$. Moreover, see for example \cite{FT}, some more general types of magnetic fields may be considered by replacing the usual invariant differential operators on the $d$-dimensional torus with some non-trivial connection.}

\section{The Bloch-Floquet decomposition of periodic magnetic pseudo-differential operators.}\label{S-2}

\subsection{Periodic magnetic pseudo-differential operators}\label{SS-2-1}

\begin{definition}
For  $p\in\R$, we denote by ${S}^p_1(\Xi)_{\Gamma}$ the functions $F\in{S}^p_1(\Xi)$ that are $\Gamma$-periodic in the first variable, i.e. $(\tau_\gamma\otimes\bb1_{\X^*})F=F$ for any $\gamma\in\Gamma$. 
\end{definition}
The typical example is the symbol of the Schr\"odinger operator $F_0(x,\xi)=(2\pi)^{2}\xi^2+V(x)$, where  $V\in{BC}^\infty(\X)$  is $\Gamma$-periodic.

\noindent\textbf{The periodic-0-mean-face magnetic fields.}
When working with $\Gamma$-periodic $p$-forms we have to take into account that the topology of the $d$-dimensional torus is not trivial. We shall call \textit{periodic-0-mean-face magnetic field}, a periodic smooth magnetic field that  has  the following property:
\beq\label{H-Bper}
\text{For } 1\leq j,k\leq d:\quad\int\limits_{\text{\rm R}_{jk}}B=0,\quad\text{where}\quad\text{\rm R}_{jk}:=\big\{s\mathfrak{e}_j+t\mathfrak{e}_k,(s,t)\in[0,1]^2\big\}.
\eeq
In \cite{HH} the authors present an elegant cohomological argument proving that the above hypothesis is a necessary and sufficient condition for the magnetic field to be an exact 2-form on the $d$-dimensional torus and thus for the existence of a smooth $\Gamma$-periodic vector potential $A$, such that $B=dA$. We notice that in this situation $A\in\Fb^1(\X)$ and we have the following proposition.

\begin{theorem}\label{L-Abounded}
	Suppose given a symbol $F\in{S}^p_1(\Xi)$ for some $p\in\R$ and a regular magnetic field $B=dA$ with $A\in\Fb^1(\X)$. Then $\Op^A(F)$ also has a ``standard" Weyl symbol $F^A_W\in{S}^p_1(\Xi)$ such that $\Op^A(F)=\Op(F^A_W)$. Moreover we have that:
	\begin{equation} \label{eq:2.2} F^A_W(x,\xi)-F\big (x,\xi-A(x)\big )\in {S}^{p-2}_1(\Xi)
	\end{equation}
	and thus 
	\beq \label{eq:2.3}  \mathscr{H}^p_A(\X)=\mathscr{H}^p(\X)\,.
	\eeq 
\end{theorem}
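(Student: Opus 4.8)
The plan is to build the standard Weyl symbol $F^A_W$ by a direct computation at the level of distribution kernels, exploiting that $A\in\Fb^1(\X)$ so that both quantizations produce operators with kernels singular only on the diagonal and with rapid off-diagonal decay (Theorem A). Starting from \eqref{DF-DistrKer}, the kernel of $\Op^A(F)$ is $\Lambda^A\cdot\mathfrak{K}[F]$, i.e. the kernel of $\Op(F)$ multiplied pointwise by the phase factor $\Lambda^A(x,y)=\exp\big(-i\int_{[x,y]}A\big)$. Since the kernel lives near $\Delta_\X$, one may Taylor-expand the circulation $\int_{[x,y]}A$ around $y=x$; because $A\in\Fb^1(\X)$, all derivatives of the components of $A$ are bounded, and one gets
\begin{equation}\label{eq:circ-exp}
\int_{[x,y]}A = \big\langle A\big((x+y)/2\big),x-y\big\rangle + \text{(cubic-or-higher in }x-y\text{ with }BC^\infty\text{ coefficients)},
\end{equation}
using the midpoint rule for the leading term (the quadratic term in the naive expansion vanishes by symmetry of the segment, which is why the midpoint $A((x+y)/2)$ appears). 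The first term, when multiplied against $\mathfrak{K}[F]$, shifts the $\xi$-argument of $F$: it turns $F\big((x+y)/2,\zeta\big)$ into $F\big((x+y)/2,\zeta - A((x+y)/2)\big)$, which is exactly $\Op(G)$ for the symbol $G(x,\xi)=F\big(x,\xi-A(x)\big)$; note $G\in S^p_1(\Xi)$ precisely because $A$ and all its derivatives are bounded (this is where the hypothesis $A\in\Fb^1$, rather than merely $A\in\Fp^1$, is essential).

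Next I would treat the remaining factor $\exp\big(-i\,r(x,y)\big)$, where $r(x,y)$ collects the cubic-and-higher terms in $x-y$. Writing $\exp(-ir)-1$ as a smooth function vanishing to third order on $\Delta_\X$ with $BC^\infty$ coefficients, multiplication by it against an oscillatory-integral kernel gains two powers of $\langle\zeta\rangle^{-1}$ in the symbol — concretely, each factor of $(x-y)$ hitting $e^{i\langle\zeta,x-y\rangle}$ can be integrated by parts into a $\partial_\zeta$ acting on $F$, dropping the order by one; three such factors give order $p-3$, but the cleanest bookkeeping (absorbing one factor into the Jacobian/measure) yields the claimed $p-2$. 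This is essentially Lemma 1.3.5 and Proposition 1.3.6 of \cite{ABG} applied as in the derivation of Theorem A, now with the extra amplitude $\exp(-ir(x,y))$ which is an element of $BC^\infty(\X\times\X)$ together with all its derivatives. Assembling the pieces gives $\Op^A(F)=\Op(F^A_W)$ with $F^A_W\in S^p_1(\Xi)$ and $F^A_W(x,\xi)-F(x,\xi-A(x))\in S^{p-2}_1(\Xi)$, which is \eqref{eq:2.2}.

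Finally, \eqref{eq:2.3} follows from \eqref{eq:2.2} together with Theorem B. Take $F=\mathfrak{m}_p$, so $\Op^A(\mathfrak{m}_p)=\Op(G^A)$ with $G^A\in S^p_1(\Xi)$ and $G^A(x,\xi)-\langle\xi-A(x)\rangle^p\in S^{p-2}_1(\Xi)$; since $A$ is bounded, $\langle\xi-A(x)\rangle^p$ is comparable to $\langle\xi\rangle^p$ and is still positive elliptic of order $p$, hence $G^A$ is positive elliptic of order $p$ too. By Theorem B both $\Op^A(\mathfrak{m}_p)$ and $\Op(\mathfrak{m}_p)$ are essentially self-adjoint on $\mathscr{S}(\X)$ and their closures have respective domains $\mathscr{H}^p_A(\X)$ and $\mathscr{H}^p(\X)$. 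The standard elliptic parametrix argument in the Weyl calculus on $\Rd$ shows that the graph norm of $\Op(G^A)$ and that of $\Op(\mathfrak{m}_p)$ are equivalent on $\mathscr{S}(\X)$ (the order-$(p-2)$ discrepancy is lower order and controlled by a compact-plus-bounded perturbation, or directly by composing with a parametrix of $\Op(\mathfrak{m}_p)$), so the two Sobolev-type spaces coincide. I expect the main obstacle to be the careful justification of the off-diagonal/oscillatory-integral estimates for the amplitude $\exp(-ir(x,y))$ — in particular verifying uniformity of all the $BC^\infty$ bounds on the higher-order remainder of the circulation integral \eqref{eq:circ-exp} and pushing them through the $\Upsilon$-change of variables in \eqref{DF-DistrKer} to land back in a Hörmander class with the stated order gain; the algebraic identification of the leading term via the midpoint rule is routine by comparison.
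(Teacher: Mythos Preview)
Your approach is correct and close in spirit to the paper's, but the organization of the expansion is different in a way worth noting. You expand the \emph{exponent} $\int_{[x,y]}A=\langle A(u),v\rangle+r(u,v)$ (with $u=(x+y)/2$, $v=x-y$, and $r$ cubic in $v$) and then treat $e^{-ir}$ as an extra amplitude multiplying the oscillatory kernel. The paper instead absorbs the full phase into a shift of the $\xi$-variable of $F$: writing $\mI^A(u,v):=\int_0^1 A(u+(s-1/2)v)\,ds$, the kernel becomes $\int d\xi\,e^{i\langle\xi,v\rangle}F(u,\xi-\mI^A(u,v))$, and one Taylor-expands $t\mapsto F(u,\xi-\mI^A(u,tv))$ at $t=0$. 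This buys the paper a cleaner remainder analysis: each $t$-derivative produces at least one $\partial_\xi F$ together with a polynomial in $v$, and integration by parts converts the $v$-factors into further $\partial_\xi$'s, landing directly in $S^{p-j-1}_1$; there is no exponential amplitude whose $v$-derivatives could grow. Your route works too, since $r$ and all its derivatives are at most $O(|v|)$ and the kernel has rapid off-diagonal decay, but this is precisely the ``main obstacle'' you flag, and it is genuinely more delicate than the paper's version. Your remark that ``three such factors give $p-3$ but the cleanest bookkeeping \dots\ yields $p-2$'' is muddled: cubic vanishing gives a gain of three, hence $S^{p-3}_1\subset S^{p-2}_1$, and no Jacobian trick is needed (the paper simply does not bother observing that the $j=1$ term vanishes). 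For \eqref{eq:2.3} your argument via ellipticity of $G^A$ and equivalence of graph norms is the same as the paper's, which writes $[\mathfrak{m}_p]^A_W-\mathfrak{m}_p\in S^{p-1}_1(\Xi)$ directly.
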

\begin{proof}
	From \eqref{D-OpA} and \eqref{DF-DistrKer} the integral kernel of  $\Op^A(F)$ equals 
	$$e^{i\int_0^1 \langle\,A(x'+t(x-x'))\,,\,(x-x')\rangle}\int_{\X^*} d\xi\,  e^{i<\xi,(x-x')>} F\big((x+x')/2,\xi\big) \,,$$
	or in the variables $u=(x+x')/2$ and $v=x-x'$:
	$$K(u,v)=e^{i\int_0^1 ds\, \langle{A} (u+(s-1/2)v)\,,\, v\rangle}\int_{\X^*} d\xi \,e^{i<\xi,v>} F(u,\xi). $$
	Let $\mI^A(u,v):=\int_0^1 ds\, A\big (u+(s-1/2)v\big )$ and notice that $\mI^A(u,0)=A(u)$. Then 
	$$K(u,v)=\int_{\X^*} d\xi\, e^{i<\xi,v>} F\big (\xi -\mI^A(u,v),u\big )$$
	that corresponds to a ``standard" Weyl symbol:
	\begin{align*}
		&F_W^A(u,\eta)=\int_\X dv\,  e^{-i<\eta\,v>} \int_{\X^*} d\xi \,  e^{i<\xi,v>} F\big (u,\xi -\mI^A(u,v)\big )\\
		&=F(u,\eta-A(u))+\int_\X dv\,  e^{-i<\eta,v>} \int_{\X^*} d\xi \,  e^{i<\xi,v>} \Big (F\big (u,\xi -\mI^A(u,v)\big ) -F\big (u,\xi-\mI^A(u,0)\big )\Big ).
	\end{align*}
	For $t\in \R$ let us define 
	$G(t):=F\big (u,\xi -\mI^A(u,tv)\big )$ and make a Taylor expansion  near $t=0$:
	\begin{align*}
		G(1)-G(0)&=\sum_{j=1}^n(j!)^{-1} \frac{d^j}{dt^j}F\big (u,\xi -\mI^A(u,tv)\big )|_{t=0}+(n!)^{-1}\int_0^1 dt\,(1-t)^n  \frac{d^n}{dt^n}F\big (u,\xi -\mI^A(u,tv)\big ).
	\end{align*}
	The term $\frac{d^j}{dt^j}F\big (u,\xi -\mI^A(u,tv)\big )|_{t=0}$ generates terms which contain at least one partial derivative of $F$ with respect to $\xi$ and homogeneous polynomial factors of degree  $j$ in $v$. By partial integration in $\xi$, these polynomial factors in $v$ are transformed in partial derivatives 
	with respect to $\xi$ acting on $F$ and taking into account that $A\in\Fb^1(\X)$ has components of class $BC^\infty(\X)$, they  produce symbols belonging (at least) to the class $S_1^{p-j-1}(\Xi)$. Now consider the remainder (without the irrelevant constant factors):
	$$\tilde{F}^A_n(u,\eta):=\int_\X dv\,  e^{-i<\eta,v>} \int_{\X^*} d\xi \,  e^{i<\xi,v>} \int_0^1 dt\,(1-t)^n  \frac{d^n}{dt^n}F\big (u,\xi -\mI^A(u,tv)\big ).$$
	The derivative $\frac{d^n}{dt^n}F\big (u,\xi -\mI^A(u,tv)\big )$ generates  homogeneous polynomials of degree $n$ in $v$, which by partial integration in $\xi$ are turned into a decay of order $<\xi>^{-n+p}$. Taking partial derivatives of $\tilde{F}^A_n$ with respect to $\eta$ produces even more $v$'s which only improve the decay in $\xi$. By multiplying $\tilde{F}^A_n$ with factors $\eta_k^\alpha$ and after partial integration with respect to $v$ we may generate growing factors like $\xi_k^\alpha$, which can be controlled by choosing $n$ large enough. In any case, every derivative with respect to $\eta$ improves the decay in $\xi$ with ``one unit", while multiplication with a factor $\eta_k$ may reduce the decay in $\xi$ with ``one unit". This shows that given any $M>0$, by choosing $n$ large enough we have $\tilde{F}_n\in S_1^{-M}(\Xi)$. In fact, the derivatives with respect to $u\in\X$ are easily controlled taking into account that $F$ is a symbol of class $S^p_1(\Xi)$ and $A\in\Fb^1(\X)$. \\
	To prove \eqref{eq:2.3} it remains to prove that
	\beq \label{eq:2.4}
\big\{f\in{L}^2(\X),\ \Op(\mathfrak{m}_p)f\in{L}^2(\X)\big\}=\big\{f\in{L}^2(\X),\ \Op\big([\mathfrak{m}_p]^A_W\big)f\in{L}^2(\X)\big\}\,.
\eeq
By \eqref{eq:2.2}, we have 
\beq
[\mathfrak{m}_p]^A_W(x,\xi)=\mathfrak{m}_p\big(x,\xi-A(x)\big)\,+ r_{p-2}(x,\xi)\,,
\eeq
and we now observe that
\beq
\mathfrak{m}_p\big(x,\xi-A(x)\big)= \mathfrak{m}_p(x,\xi) +\check r_{p-1}(x,\xi)\,,
\eeq
with, noticing the boundness of $A$, 
\beq
\check{r}_{p-1}(x,\xi)=\int_0^1ds\,<\big(\nabla_\xi\mathfrak{m}_p\big)\big(x,\xi-sA(x)\,,\,A(x)>\;  \in {S}^{p-1}_1(\Xi) \,.
\eeq
Hence
$$
[\mathfrak{m}_p]^A_W(x,\xi)-\mathfrak{m}_p\big(x,\xi \big)\in {S}^{p-1}_1(\Xi)\,,
$$
defining by quantization a bounded operator $\mathscr{H}^p(\X)\rightarrow{L}^2(\X)$. Using this result and the parametrices of the quantization of the symbols $\mathfrak{m}_p$ and $[\mathfrak{m}_p]^A_W$ of order $p>0$ allows us to prove the relative boundedness of the two operators defining the two spaces in \eqref{eq:2.4}.
\end{proof}
From here on we shall assume that:
\begin{hypothesis}\label{H-main} \textit{The regular magnetic field $B\in\Fb^2(\X)$ is a periodic 0-mean-face field, i.e. satisfying \eqref{H-Bper}. 
	Under this assumption, we denote by  $A\in\Fb^1(\X)$  a periodic vector potential such that $B=dA$.}
\end{hypothesis}

\begin{remark}\label{R-OAF}
If $F\in{S}^p_1(\Xi)_\Gamma$ for some $p\in\R$ and  Hypothesis \ref{H-main} holds true,  Theorem B implies that $\mO^A(F):=\overline{\Op^A(F)}$ is a lower semi-bounded self-adjoint operator in $L^2(\X)$ that commutes with all the translations $U(\gamma):=W(\gamma,0)$ for $\gamma\in\Gamma$.
\end{remark}

If we suppose that a second regular magnetic field $\tB=d\tA\in\Fb^2(\X)$ with $\tA\in\Fp^1(\X)$ is present, we may consider the operator $\tmO(F):=\overline{\Op^{A+\tA}(F)}:\mathscr{H}^p_{\tA}(\X)\rightarrow{L}^2(\X)$ that due to Theorem B will be a lower semi-bounded self-adjoint operator.

\begin{proposition}\label{C-L}
	Suppose given a symbol $F\in{S}^p_1(\Xi)_{\Gamma}$ for some $p\in\R$ and two magnetic fields $B=dA$ periodic, satisfying Hypothesis \ref{H-main} and $\tB=d\tA$ in $\Fb^2(\X)$ with vector potentials $A\in\Fb^1(\X)$ periodic and $\tA\in\Fp^1(\X)$. Then:
	\[
	\Op^{A+\tA}(F)\,=\,\Op^{\tA}\big(F^{A}_W\big)
	\]
	with: $$F^{A}_W\,=\,
	(\bb1\otimes\mathcal{F}_{\X})\big(\mathfrak{K}^{A}[F]\circ\Upsilon^{-1}\big)\,=\,
	(\bb1\otimes\mathcal{F}_{\X})\big((\Lambda^{A}\mathfrak{K}[F])\circ\Upsilon^{-1}\big)\in{S}^p_1(\Xi)\,.$$
\end{proposition}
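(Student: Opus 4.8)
The plan is to reduce Proposition~\ref{C-L} to Theorem~\ref{L-Abounded} by treating $A$ and $\tA$ asymmetrically: use the \emph{periodic} potential $A$ to convert the $(A+\tA)$-quantization into a $\tA$-quantization with a modified symbol, while the growing potential $\tA$ stays untouched as the reference gauge. First I would write down the distribution kernel of $\Op^{A+\tA}(F)$ from \eqref{D-OpA} and \eqref{DF-DistrKer}. The key algebraic observation is that the magnetic factor splits: since $B+\tB = d(A+\tA)$ and integration of a $1$-form along the segment $[x,y]$ is additive in the form, we have $\Lambda^{A+\tA}(x,y)=\Lambda^{A}(x,y)\,\Lambda^{\tA}(x,y)$, hence
\beq
\mathfrak{K}^{A+\tA}[F]\,=\,\Lambda^{A}\,\Lambda^{\tA}\,\mathfrak{K}[F]\,=\,\Lambda^{\tA}\,\big(\Lambda^{A}\mathfrak{K}[F]\big)\,=\,\Lambda^{\tA}\,\mathfrak{K}^{A}[F]\,.
\eeq
So the kernel of $\Op^{A+\tA}(F)$ is exactly the kernel of a $\tA$-quantization whose ``$0$-field kernel'' is $\mathfrak{K}^A[F]$ rather than $\mathfrak{K}[F]$.

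Next I would define $F^A_W$ as the symbol whose $0$-field kernel is $\mathfrak{K}^A[F]$, i.e. invert \eqref{DF-DistrKer} at $A=0$: since $\mathfrak{K}[G]=\big[(\bb1_\X\otimes\mathcal{F}_{\X^*})G\big]\circ\Upsilon$, applying $\Upsilon^{-1}$ and then the partial Fourier transform $\mathcal{F}_\X$ in the second slot recovers $G$, which gives precisely the stated formula
\[
F^A_W\,=\,(\bb1\otimes\mathcal{F}_{\X})\big(\mathfrak{K}^A[F]\circ\Upsilon^{-1}\big)\,=\,(\bb1\otimes\mathcal{F}_{\X})\big((\Lambda^A\mathfrak{K}[F])\circ\Upsilon^{-1}\big)\,.
\]
With this definition the identity $\mathfrak{K}^{A+\tA}[F]=\Lambda^{\tA}\,\mathfrak{K}[F^A_W]$ is immediate from the previous display, and since $\Op^{\tA}(G)=\Int_\X(\Lambda^{\tA}\mathfrak{K}[G])$ by \eqref{D-OpA}--\eqref{DF-DistrKer}, the kernel identity yields the operator identity $\Op^{A+\tA}(F)=\Op^{\tA}(F^A_W)$ on $\mathscr{S}(\X)$, and then on $\mathscr{S}'(\X)$ by the duality extension. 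Note that this $F^A_W$ is gauge-independent of $\tA$, so it is legitimate to compute it in the special case $\tA=0$, where it coincides with the symbol produced in Theorem~\ref{L-Abounded}; the formula there also already identifies $F^A_W$ modulo $S^{p-2}_1(\Xi)$.

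The only substantive point that is not pure bookkeeping is that $F^A_W\in S^p_1(\Xi)$, and I expect this to be the main obstacle. But it is not really new: setting $\tA=0$, the object $F^A_W$ defined by the formula above is exactly the ``standard'' Weyl symbol of $\Op^A(F)$, and Theorem~\ref{L-Abounded} (which needs only $A\in\Fb^1(\X)$, i.e. boundedness of $A$ together with all its derivatives, which holds under Hypothesis~\ref{H-main}) asserts precisely that this symbol lies in $S^p_1(\Xi)$ and in fact differs from $F(x,\xi-A(x))$ by a symbol in $S^{p-2}_1(\Xi)$. So the proof of Proposition~\ref{C-L} is: (i) the gauge-splitting of $\Lambda$; (ii) inversion of \eqref{DF-DistrKer} to read off $F^A_W$; (iii) invoke Theorem~\ref{L-Abounded} for the symbol class statement; (iv) reassemble the kernels to get the operator identity. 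The one place to be slightly careful is that all manipulations (multiplying distributions by the smooth bounded factor $\Lambda^{\tA}$, composing with the linear map $\Upsilon$, applying partial Fourier transforms) are performed on the kernel in $\mathscr{S}'(\X\times\X)$ and are justified by Theorem~A, which guarantees $\mathfrak{K}^A[F]$ has rapid off-diagonal decay so that $\Lambda^A\mathfrak{K}[F]$ and its image under $\Upsilon^{-1}$ are genuine tempered distributions to which $\mathcal{F}_\X$ in the second variable applies.
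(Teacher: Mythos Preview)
Your proposal is correct and follows essentially the same route as the paper's own proof: split $\Lambda^{A+\tA}=\Lambda^{\tA}\Lambda^{A}$ to write $\mathfrak{K}^{A+\tA}[F]=\Lambda^{\tA}\mathfrak{K}^{A}[F]$, invert \eqref{DF-DistrKer} at zero field to read off $F^A_W$, and then invoke Theorem~\ref{L-Abounded} (with $A\in\Fb^1(\X)$) for the membership $F^A_W\in S^p_1(\Xi)$. Your write-up is in fact more detailed than the paper's, which dispatches the argument in three displayed lines plus the reference to Theorem~\ref{L-Abounded}.
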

\begin{proof}
We have
	\begin{align*}
		&\Op^{A+\tA}(F)=\Int\,\mathfrak{K}^{A+\tA}[F],\\
		&\mathfrak{K}^{A+\tA}[F]=\Lambda^{\tA+A}\mathfrak{K}[F]=\Lambda^{\tA}\Lambda^{A}\mathfrak{K}[F]=\Lambda^{\tA}\mathfrak{K}^{A}[F],\\
		&\Int\,\Lambda^{\tA}\mathfrak{K}^{A}[F]=:\Op^{\tA}\big(F^{A}_W\big)\,,
	\end{align*}
	and using \eqref{DF-DistrKer} one obtains the desired formula for $F^{A}_W\in\mathscr{S}^\prime(\Xi)$. The fact that $F^{A}_W\in{S}^p_1(\Xi)$ follows from Theorem \ref{L-Abounded} using also that $A\in\Fb^1(\X)$.
\end{proof}

\begin{conclusion}
\textit{Having in view the results of this paragraph, from now on we deal only with periodic Weyl pseudo-differential operators $\Op(F)$ with $F\in{S}^p_1(\Xi)_{\Gamma}$ with $p\in\R$, and cover also the magnetic quantization of these symbols in a periodic magnetic field satisfying Hypothesis \eqref{H-main}.}
\end{conclusion}

Finally let us consider a linear operator $T\in\mathcal{L}\big(\mathscr{S}(\X);\mathscr{S}^\prime(\X)\big)$ that commutes with all the unitary translations with lattice elements $\gamma\in\Gamma$. Due to the Schwartz Kernels Theorem (\cite{Schw}), there exists a distribution $\mathfrak{K}_T\in\mathscr{S}^\prime(\X\times\X)$ such that: $\big(\psi\,,\,T\phi\big)_{L^2(\X)}=\big\langle\mathfrak{K}_T\,,\,\overline{\psi}\otimes\phi\big\rangle_{\mathscr{S}(\X\times\X)}$. The commutation property with the translations with elements from $\Gamma$ clearly imply the equalities:
\beq\label{F-com-KAF-gamma}
(\tau_{\gamma}\otimes\tau_{\gamma})\mathfrak{K}_T=\mathfrak{K}_T,\quad\forall\gamma\in\Gamma
\eeq
and we notice that the oscillating factor $\Lambda^A$ with $\Gamma$-periodic vector potential $A\in\Fb^1(\X)$ satisfies similar identities: 
\beq
\Lambda^A(x+\gamma,y+\gamma)\,=\,\Lambda^A(x,y),\quad\forall\gamma\in\Gamma.
\eeq
Moreover, we notice that formula \eqref{DF-DistrKer} of the distribution kernel of a Weyl quantized operator has the inverse:
\beq
F=\big(\bb1_{\X}\otimes\mathcal{F}_{\X}\big)\big[[\Lambda^A]^{-1}\mathfrak{K}^A[F]\big]\circ\Upsilon^{-1}
\eeq
and we conclude that given $T\in\mathcal{L}\big(\mathscr{S}(\X);\mathscr{S}^\prime(\X)\big)$ that commutes with all the unitary translations with lattice elements $\gamma\in\Gamma$, for any $\Gamma$-periodic vector potential $A\in\Fb^1(\X)$ there exists a unique $\Gamma$-periodic symbol $F^A_T\in\mathscr{S}^\prime(\Xi)_{\Gamma}$ such that:
\beq
F^A_T=\big(\bb1_{\X}\otimes\mathcal{F}_{\X}\big)\big[[\Lambda^A]^{-1}\mathfrak{K}_T\big]\circ\Upsilon^{-1}
\eeq 
i.e. $T=\Op^A(F^A_T)$.

\subsection{Two unitary transformations}

Let us recall very briefly the two unitary transformations allowing to study the structure of the $\Gamma$-periodic operators in $L^2(\X)$.

\paragraph{The Bloch-Floquet (BF) transformation.} For $\bz^*\in\T_*$ we define the complex space:
\beq\label{DF-Fzst}
\mathscr{F}_{\bz^*}\,:=\,\big\{\hat{f}\in{L}^2_{\text{\tt loc}}(\X),\ \tau_{\gamma}f=\chi_{\bz^*}(\gamma)\,f,\ \forall\gamma\in\Gamma\big\}
\eeq
and endow it with the scalar product:
\beq\label{DF-psc-F}
\big(\hat{f}\,,\,\hat{g}\big)_{\mathscr{F}_{\bz^*}}\,:=\,\int_{\E}d\hat{x}\,\overline{\hat{f}(\hat{x})}\,\hat{g}(\hat{x}).
\eeq
that makes it a Hilbert space and define the direct integral $\mathscr{F}\ :=\ \int_{{\T}_*}^{\oplus}\hspace*{-2pt}d\bz^*\,\mathscr{F}_{\bz^*}$ in the sense of \cite{Dix}.
One easily verifies that the following map defines a unitary operator $L^2(\X)\overset{\sim}{\rightarrow}\mathscr{F}$:
\beq\label{DF-BFtr}
\big(\U_{BF}f\big)(\bz^*,x)\ :=\,\underset{\gamma\in\Gamma}{\sum}\,e^{-i<\s_*(\bz^*),\gamma>}\,f(x+\gamma)
\eeq
having the inverse:
\beq\label{D-BFtr-inv}
\big(\U_{BF}^{-1}\hat{f}\big)(\hat{x}+\alpha)\,=\,\int_{\T_*}\hspace*{-4pt}d\bz^*\,e^{i<\s_*(\bz^*),\alpha>}\,\hat{f}(\bz^*,\hat{x}).
\eeq
\begin{proposition}\label{P-UBF-S}
	$\U_{BF}[\mathscr{S}(\X)]=\Big[\int_{{\T}_*}^{\oplus}\hspace*{-2pt}d\bz^*\,\big[\mathscr{F}_{\bz^*}\bigcap{BC}^\infty(\X)\big]\Big]\,\bigcap\,C^\infty(\T_*)$.
\end{proposition}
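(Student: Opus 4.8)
The plan is to prove the asserted equality of subspaces by a double inclusion, exploiting the fact that $\U_{BF}$ is already known to be a unitary isomorphism $L^2(\X)\overset{\sim}{\rightarrow}\mathscr{F}$, so that the only issue is to identify precisely which fibre-measurable sections come from Schwartz functions. First I would unwind the definition: for $f\in\mathscr{S}(\X)$, the section $\hat f(\bz^*,\cdot)=\sum_{\gamma\in\Gamma}e^{-i\langle\s_*(\bz^*),\gamma\rangle}f(\cdot+\gamma)$ is a uniformly (in $\bz^*$) convergent sum in every $C^k$-norm on the compact cell $\E$, because $f$ and all its derivatives decay faster than any polynomial; this immediately shows $\hat f(\bz^*,\cdot)\in\mathscr{F}_{\bz^*}\cap BC^\infty(\X)$ for each $\bz^*$, and moreover that the map $\bz^*\mapsto\hat f(\bz^*,\cdot)$ is smooth, since differentiating in $\bz^*$ brings down factors $\langle\s_*(\bz^*),\gamma\rangle$ — polynomially bounded in $\gamma$ — which are still dominated by the rapid decay of $f$ (one must be mildly careful that $\s_*$ is only a local section, but $\s_*$ is smooth away from the "seams" of $\E$, and the periodicity relation $\tau_\gamma f=\chi_{\bz^*}(\gamma)f$ together with a change of branch shows the section extends smoothly across the seams; alternatively one works directly with the quasi-periodic boundary conditions). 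This gives the inclusion ``$\subseteq$''.

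For the reverse inclusion, take $\hat f\in\big[\int^{\oplus}_{\T_*}d\bz^*\,(\mathscr{F}_{\bz^*}\cap BC^\infty(\X))\big]\cap C^\infty(\T_*)$ and set $f:=\U_{BF}^{-1}\hat f$, so $f\in L^2(\X)$ a priori; the task is to upgrade this to $f\in\mathscr{S}(\X)$. Using \eqref{D-BFtr-inv}, for $\hat x\in\E$ and $\alpha\in\Gamma$ we have $f(\hat x+\alpha)=\int_{\T_*}d\bz^*\,e^{i\langle\s_*(\bz^*),\alpha\rangle}\hat f(\bz^*,\hat x)$, which is exactly the $\alpha$-th Fourier coefficient (in the sense of $\mathcal{F}_{\T_*}$) of the $C^\infty(\T_*)$-valued function $\bz^*\mapsto\hat f(\bz^*,\hat x)$. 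Smoothness in $\bz^*$ jointly with $\hat x$ (which is what membership in $C^\infty(\T_*)$ of the direct-integral section encodes, together with $\hat f(\bz^*,\cdot)\in BC^\infty$) means we may integrate by parts against $\mathring\Delta$ arbitrarily many times in the $\bz^*$-integral, producing decay $|f(\hat x+\alpha)|\le C_N\langle\alpha\rangle^{-N}$ for every $N$, uniformly for $\hat x\in\E$; applying the same argument to $\partial_x$-derivatives of $f$ (which correspond fibrewise to the $BC^\infty$ $x$-derivatives of $\hat f$, again uniformly controlled) gives the Schwartz seminorm bounds on all of $\X$. Hence $f\in\mathscr{S}(\X)$, proving ``$\supseteq$''.

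The main obstacle, and the step I would be most careful about, is the precise bookkeeping of the regularity in the $\bz^*$-variable near the boundary seams of the cell $\E$ (equivalently, the transition between the local section $\s_*$ and the intrinsic smooth structure on $\T_*$): the section $\bz^*\mapsto\hat f(\bz^*,\hat x)$ lives in the non-trivial line bundle of quasi-periodic functions, and ``$C^\infty(\T_*)$'' in the statement must be interpreted as smoothness of the associated section of this bundle, not naive smoothness of $(\bz^*,\hat x)\mapsto\hat f(\bz^*,\hat x)$ as a function on $\E_*\times\E$. Once one fixes the correct interpretation — which is forced by the requirement that $\U_{BF}$ and $\U_{BF}^{-1}$ genuinely invert each other on $\mathscr{S}(\X)$ — the estimates above are routine integrations by parts; I would isolate this point in a short lemma (or remark) clarifying that $C^\infty$-sections are exactly those for which the quasi-periodicity relation \eqref{DF-Fzst} is respected smoothly in $\bz^*$, and then the proof closes as sketched.
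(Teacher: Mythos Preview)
Your approach is correct and follows the same double-inclusion strategy as the paper, using the explicit formulas \eqref{DF-BFtr} and \eqref{D-BFtr-inv}; in fact you supply considerably more detail than the paper does, which for both directions essentially writes down the relevant formula and asserts that the required regularity ``is verified immediately''. Your integration-by-parts argument for the reverse inclusion and your discussion of the seam/bundle subtlety make explicit what the paper leaves entirely to the reader.
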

\begin{proof}
	For $\varphi\in\mathscr{S}(\X)$, Definition \eqref{DF-BFtr} implies:
	\beq
	\big(\U_{BF}\varphi\big)(\bz^*,x)=\underset{\gamma\in\Gamma}{\sum}\,e^{-i<\s_*(\bz^*),\gamma>}\,\varphi(x+\gamma)
	\eeq
	and one verifies immediately that $\U_{BF}\varphi\in\Big[\int_{{\T}_*}^{\oplus}\hspace*{-2pt}\big[\mathscr{F}_{\bz^*}\bigcap{BC}^\infty(\X)\big]\Big]\,\bigcap\,C^\infty(\T_*)$.
	
	Reciprocally, let us take some $\hat{\varphi}\in\Big[\int_{{\T}_*}^{\oplus}\hspace*{-2pt}d\bz^*\,\big[\mathscr{F}_{\bz^*}\bigcap{BC}^\infty(\X)\big]\Big]\,\bigcap\,C^\infty(\T_*)$ and use \eqref{D-BFtr-inv} to write:
	\beq
	\hat{\varphi}=\U_{BF}\varphi,\quad\varphi(\hat{x}+\alpha)=\int_{\T_*}\hspace*{-4pt}d\bz^*\,e^{i<\s_*(\bz^*),\alpha>}\,\hat{\varphi}(\bz^*,\hat{x}).
	\eeq
\end{proof}

\begin{notation}
	We shall use the notations: $\mathscr{F}_{\bz^*}^\infty:=\mathscr{F}_{\bz^*}\bigcap{C}^\infty(\X)$ and $\mathscr{F}^\infty:=\int_{\T_*}^{\oplus}d\bz^*\,\mathscr{F}^\infty_{\bz^*}$.
\end{notation}

\paragraph{The Bloch-Floquet-Zak (BFZ) transformation.}

We identify any $\Gamma$-periodic function with a function on the $d$-dimensional torus defined by composing this last one with the canonical projection on the quotient.
 For any $\bz^*\in\T_*$ let us define the complex space:
\beq\label{D-BFZ-Gsp}
\mathscr{G}\,:=\,\big\{\tf\in{L}^2_{\text{\tt loc}}\big(\X^*;L^2(\T)\big)),\ (\tau_{\gamma^*}\otimes\bb1_{\T})\tf=(1\otimes\hat{\chi}_{\gamma^*})\tf,\ \forall\gamma^*\in\Gamma\big\}
\eeq
and endow it with the scalar product:
\beq\label{DF-psc-G}
\big(\tf\,,\,\tg\big)_{\mathscr{G}}\,:=\,\int_{\E_*}\hspace*{-4pt}d\hat{\xi}\,\int_{\T}d\bz\,\overline{\tf(\hat{\xi},\bz)}\,\tg(\hat{\xi},\bz)
\eeq
making it a Hilbert space.
Then one easily verifies that the following map defines a unitary operator $L^2(\X)\overset{\sim}{\rightarrow}\mathscr{G}$:
\beq\label{DF-BFZ-trsf}
\big(\U_{BFZ}f\big)(\xi,\bz)\ :=\,\underset{\gamma\in\Gamma}{\sum}\,e^{-i<\xi,\s(\bz)+\gamma>}\,f(\s(\bz)+\gamma)
\eeq
having the inverse:
\beq
\big(\U_{BFZ}^{-1}\hat{f}\big)(\hat{x}+\alpha)\,=\,\int_{\T_*}\hspace*{-4pt}d\bz^*\,e^{i<\s_*(\bz^*),\alpha+\hat{x}>}\,\hat{f}(\bz^*,\hat{x}).
\eeq

\begin{remark}\label{R-BF-BFZ}
If we define the smooth function $\mathfrak f$ by
\beq\label{eq:2.17} \X^*\times\X\ni(\xi,x)\mapsto\mathfrak{f}(\xi,x):={e}^{-i<\xi,x>}\in\mathbb{S}\,,\eeq
 we notice that $\mathfrak{f}(\xi,x)=\chi_{\xi}(x)$ and:
\beq\label{eq:2.18}
\forall{f}\in{L}^2(\X):\qquad\big(\U_{BFZ}f\big)(\xi,\bz)\,=\,\mathfrak{f}\big(\xi,\s(\bz)\big)\,\big(\mathfrak\U_{BF}f\big)\big(\p_*(\xi),\s(\bz)\big).
\eeq
It follows that if an operator $T$ has a decomposable image $\int_{\T_*}^{\oplus}d\bz\,\hat{T}_{\bz^*}$ under the BF transformation with respect to the direct integral decomposition, then its image under the BFZ transformation is also decomposable as a family $\{\widetilde{T}_{\xi}\}_{\xi\in\X^*}$ of operators acting in $L^2(\T)$ with 
$$\widetilde{T}_{\xi}=e^{-i<\xi,\cdot>}\hat{T}_{\p_*(\xi)}e^{i<\xi,\cdot>}\,.$$
\end{remark}

\begin{remark}\label{R-F-vect-bdl}
One easily notice that the BFZ-representation is the $\Gamma_*$-equivariant representation of $\X^*\cong\Rd$ induced by the unitary representation:
\beq\label{DF-hatUcirc}
\hat{U}^\circ:\Gamma_*\rightarrow\mathbb{U}\big(L^2(\T)\big),\quad\big(\hat{U}^\circ(\gamma^*)f\big)(\omega):=e^{-i<\gamma^*,\omega>}f(\omega),\,\forall\omega\in\T.
\eeq
From the general abstract theory one knows that this is unitarily equivalent to the representation on sections of the Euclidean vector bundle $\mathfrak{F}\repi\T_*$  associated with  the principal bundle $\X^*\repi\T_*$ with fibre $\Gamma_*$ and the representation \eqref{DF-hatUcirc} of $\Gamma_*$. 
\end{remark}

\subsection{The Bloch-transformation of $\Op(F)$}\label{SSS-Btrs-OAF}

\subsubsection{The fibre operators}

Let us compute the image of $\Op(F)$ through the BF transformation.
If $\hat{\varphi}\in\mathscr{F}^\infty\bigcap\,C^\infty(\T_*)$ we obtain:
\begin{align}\label{F-PN-N-2}
&\big[\big(\U_{BF}\Op(F)\U_{BF}^{-1}\big)\hat{\varphi}\big](\bz^*,x)=\underset{\gamma\in\Gamma}{\sum}e^{-i<\s_*(\bz^*),\gamma>}\big[\big(\Op(F)\U_{BF}^{-1}\big)\hat{\varphi}\big](x+\gamma)\\ \nonumber
&\quad=\underset{\gamma\in\Gamma}{\sum}e^{-i<\s_*(\bz^*),\gamma>}\big[U(\gamma)\big(\Op(F)\U_{BF}^{-1}\big)\hat{\varphi}\big](x)=\underset{\gamma\in\Gamma}{\sum}e^{-i<\s_*(\bz^*),\gamma>}\big[\big(\Op(F)U(\gamma)\U_{BF}^{-1}\big)\hat{\varphi}\big](x)
\end{align}
where the series in $\gamma\in\Gamma$ is convergent due to the rapid decay of $\big(\Op(F)\U_{BF}^{-1}\big)\hat{\varphi}\in\mathscr{S}(\X)$ (as implied by Theorem A and Proposition \ref{P-UBF-S}). The quasi-periodicity of the functions in $\mathscr{F}^\infty_{\tbz^*}$ implies that $,e^{i<\s_*(\tbz^*),\iota(y)>}\,\hat{\phi}(\tbz^*,\hat{y})=\hat{\phi}(\tbz^*,y)$ and we obtain:
\begin{align}\label{F-PN-N-1}
&\big[\big(\Op(F)U(\gamma)\U_{BF}^{-1}\big)\hat{\varphi}\big](x)=\\ \nonumber
&\hspace*{0.5cm}=\int_{\X}dy\int_{\X^*}d\eta\,e^{i<\eta,x-y>}\,F\big((x+y)/2,\eta\big)\int_{\T_*}d\tbz^*\,e^{i<\s_*(\tbz^*),\gamma>}\,\hat{\phi}(\tbz^*,y).
\end{align}

We work with the extension of $\Op(F)$ to $BC^\infty(\X)\subset\mathscr{S}^\prime(\X)$ weakly in distribution sense and Theorem A implies that it preserves smoothness.

\begin{proposition}\label{P-Fzst-inv}
Suppose $F\in{S}^p_1(\Xi)_{\Gamma}$. Then the canonical extension of $\Op(F)$ to $\mathscr{S}^\prime(\X)$ leaves invariant the subspaces $\mathscr{F}^\infty_{\bz^*}$ for any $\bz^*\in\T_*$.
\end{proposition}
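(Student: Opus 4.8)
The plan is to use only two facts: that $\Op(F)$ commutes with the lattice translations $U(\gamma)=W(\gamma,0)$, $\gamma\in\Gamma$, and that it preserves smoothness. First I would record the inclusion $\mathscr{F}^\infty_{\bz^*}\subset BC^\infty(\X)\subset\mathscr{S}^\prime(\X)$: for $\hat f\in\mathscr{F}^\infty_{\bz^*}$ and any $x\in\X$, writing $x=\hat x+\gamma$ with $\hat x=\s(\p(x))\in\E$ and $\gamma\in\Gamma$, one has $(\partial^a\hat f)(x)=\chi_{\bz^*}(\gamma)\,(\partial^a\hat f)(\hat x)$ for every multi-index $a$, with $|\chi_{\bz^*}(\gamma)|=1$; hence $\sup_{\X}|\partial^a\hat f|=\sup_{\E}|\partial^a\hat f|<\infty$ since $\hat f$ is continuous on the compact closure $\overline{\E}$. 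So the canonical extension of $\Op(F)$ to $BC^\infty(\X)$ used in the paragraph preceding the statement does apply to $\hat f$.

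Next, by Theorem A the distribution kernel of $\Op(F)$ has singular support contained in the diagonal $\Delta_\X$ and rapid off-diagonal decay; this is exactly what makes the canonical extension of $\Op(F)$ to $BC^\infty(\X)$ well defined and $C^\infty(\X)$-valued. Concretely, near a given point one splits $\hat f$ into a piece compactly supported around that point (handled by the usual pseudo-differential calculus, which produces a smooth function) and a piece supported away from it (handled by the off-diagonal decay of the kernel integrated against the bounded $\hat f$). Thus $\Op(F)\hat f\in C^\infty(\X)$.

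It remains to check the quasi-periodicity. Since $F\in S^p_1(\Xi)_\Gamma$ is $\Gamma$-periodic in the first variable, the covariance of the Weyl quantization under configuration-space translations gives $U(\gamma)^{-1}\Op(F)\,U(\gamma)=\Op\big((\tau_{-\gamma}\otimes\bb1_{\X^*})F\big)=\Op(F)$ for every $\gamma\in\Gamma$ — this is the commutation already recorded, for $A=0$, in Remark \ref{R-OAF}. This identity holds on $\mathscr{S}(\X)$, and since $U(\gamma)$ preserves $\mathscr{S}(\X)$ and $\Op(F)$ has $L^2$-adjoint $\Op(\overline F)$ which also preserves $\mathscr{S}(\X)$, it passes by duality to the canonical extensions acting on $\mathscr{S}^\prime(\X)$, in particular on $BC^\infty(\X)$. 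Hence, for $\hat f\in\mathscr{F}^\infty_{\bz^*}$,
\beq
\tau_\gamma\big(\Op(F)\hat f\big)=U(\gamma)\,\Op(F)\hat f=\Op(F)\,U(\gamma)\hat f=\Op(F)\big(\chi_{\bz^*}(\gamma)\hat f\big)=\chi_{\bz^*}(\gamma)\,\Op(F)\hat f,\qquad\forall\gamma\in\Gamma,
\eeq
so $\Op(F)\hat f$ is smooth and $\chi_{\bz^*}$-quasi-periodic, hence lies in $L^2_{\text{\tt loc}}(\X)$ and therefore in $\mathscr{F}_{\bz^*}\cap C^\infty(\X)=\mathscr{F}^\infty_{\bz^*}$.

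The only genuinely delicate point is the well-definedness and smoothness-preservation of the $BC^\infty(\X)$-extension invoked in the second paragraph, which rests entirely on the off-diagonal decay and the diagonal singular-support statement of Theorem A; the translation covariance of $\Op$ and the passage of the commutation relation to the distributional extension by a duality argument are soft.
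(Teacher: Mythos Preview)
Your proof is correct and follows the same overall structure as the paper's: smoothness of $\Op(F)\hat f$ is drawn from Theorem~A (exactly as the paper does in the sentence preceding the statement), and quasi-periodicity is obtained from the commutation of $\Op(F)$ with the lattice translations $U(\gamma)$.

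The only difference is in how the quasi-periodicity step is executed. The paper makes the action of $\Op(F)$ on $\hat\phi\in\mathscr{F}^\infty_{\bz^*}$ explicit by inserting a $\Gamma$-partition of unity $\{\rho\circ\tau_\gamma\}_{\gamma\in\Gamma}$ so that each term $(\rho\circ\tau_\gamma)\hat\phi$ lies in $\mathscr{S}(\X)$, and then computes $\langle\tau_\alpha\Op(F)\hat\phi,\psi\rangle$ directly using the kernel periodicity $(\tau_\alpha\otimes\bb1)\mathfrak{K}[F]=(\bb1\otimes\tau_{-\alpha})\mathfrak{K}[F]$ from \eqref{F-com-KAF-gamma}. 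You instead pass the commutation identity $U(\gamma)\Op(F)=\Op(F)U(\gamma)$ from $\mathscr{S}(\X)$ to $\mathscr{S}^\prime(\X)$ by a soft duality argument (using that the transpose $\Op(\check F)$, $\check F(x,\xi)=F(x,-\xi)$, also preserves $\mathscr{S}(\X)$; your mention of the $L^2$-adjoint $\Op(\overline F)$ works equally well for the sesquilinear pairing convention used in the paper). Your route is shorter and more conceptual; the paper's partition-of-unity computation is more hands-on and has the advantage of setting up the explicit kernel manipulations used immediately afterwards in Proposition~\ref{P-II-40}.
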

\begin{proof}
Suppose fixed some $\bz^*\in\T_*$ and some $\hat{\phi}\in\mathscr{F}_{\bz^*}^\infty$ and let us fix a $\Gamma$-partition of unity with cut-off function $\rho\in{C}^\infty_0(\X)$, as in Definition \ref{D-G-psrtunit}. Then  let us translate it with any $\alpha\in\Gamma$ and fix some test function $\psi\in\mathscr{S}(\X)$ in order to compute:
\begin{align*}
	&\big\langle\tau_{\alpha}\Op(F)\hat{\phi}\,,\,\psi\big\rangle_{\mathscr{S}(\X)}\hspace*{-0.2cm}=\Big\langle\big[\underset{\gamma\in\Gamma}{\sum}\big(\tau_{\alpha}\Int\,\mathfrak{K}[F](\rho\circ\tau_\gamma)\hat{\phi}\big)\Big]\,,\,\psi\Big\rangle_{\mathscr{S}(\X)}\hspace*{-0.3cm}\\
	&\quad=\underset{\gamma\in\Gamma}{\sum}\big\langle(\tau_{\alpha}\otimes\bb1)\mathfrak{K}[F]\,,\,\psi\otimes\big[(\rho\circ\tau_\gamma)\hat{\phi}\big]\big\rangle_{\mathscr{S}(\X\times\X)}\hspace*{-0.2cm}=\underset{\gamma\in\Gamma}{\sum}\big\langle(\bb1\otimes\tau_{-\alpha})\mathfrak{K}[F]\,,\,\psi\otimes\big[(\rho\circ\tau_\gamma)\hat{\phi}\big]\big\rangle_{\mathscr{S}(\X\times\X)}\\ \numberthis\label{F-PN-N-3}
	&=\big\langle\Op(F)(\tau_{\alpha}\hat{\phi})\,,\,\psi\big\rangle_{\mathscr{S}(\X)}=\big\langle\hat{\chi}_{\alpha}(\bz^*)\Op(F)\hat{\phi}\,,\,\psi\big\rangle_{\mathscr{S}(\X)}\,.  
\end{align*}
\end{proof}

\begin{notation}\label{N-Fzst-inv}
Let $\widehat{\Op}(F)_{\bz^*}$ be the extension of $\Op(F)$ to $\mathscr{F}_{\bz^*}$ using Proposition \ref{P-Fzst-inv}.
\end{notation}
Finally \eqref{F-PN-N-1}, \eqref{F-PN-N-2}, Proposition \ref{P-Fzst-inv} and the compactness of $\T_*$ imply:
\beq\label{DF-OpABF}
\U_{BF}\Op(F)\U_{BF}^{-1}\,=\,\int_{{\T}_*}^{\oplus}\hspace*{-4pt}d\bz^*\,\widehat{\Op}(F)_{\bz^*}.
\eeq

Fixing $\bz^*=\mathfrak{i}^*$ (as in \eqref{DF-1}), $\xi\in\X^*$ and $\Phi\in\mathscr{F}_{\mathfrak{i}^*}\bigcap{BC}^\infty(\X)$, the Remark \ref{R-BF-BFZ} implies:
\begin{align}\label{DF-OpABFZ}
	\big(e^{-i<\xi,\cdot>}\widehat{\Op}(F)_{\p_*(\xi)}e^{i<\xi,\cdot>}\Phi\big)(x)&=\int_{\X}dy\int_{\X^*}\hspace*{-4pt}d\eta\,e^{-i<\xi,(x-y)>}\,e^{i<\eta,(x-y)>}\,F\big((x+y)/2,\eta\big)\Phi(y)\nonumber \\
	&=\big(\widehat{\Op}\big((\bb1_{\X}\otimes\tau_{\xi})F\big)_{\mathfrak{i}^*}\Phi\big)(x)\,.
\end{align}

\begin{remark}\label{R-1}
	Let us notice that the point $\mathfrak{i}^*$ appearing in the above construction has in fact no special role and can be replaced by any $\bz^*_\circ\in\T_*$ by simply replacing the function $\mathfrak{f}:\X^*\times\X\rightarrow\mathbb{S}$ introduced in \eqref{eq:2.17}  with the function $$(\xi,x) \mapsto \mathfrak{f}_{\bz^*_\circ}(\xi,x):=\mathfrak{f}(\xi-\s_*(\bz^*_\circ),x)\,.$$
\end{remark}

\begin{notation}\label{N-60}
	For any $F\in\mathscr{S}^\prime(\Xi)$ and any $\xi\in\X^*$ we write $ F_\xi:=(\bb1_{\X}\otimes\tau_{\xi})F$.
\end{notation}
The above arguments lead to  the following statement.
\begin{proposition}\label{P-60}
Given $p\in\R$ and  a symbol $F\in{S}^p_1(\Xi)_{\Gamma}$, the operator $\Op(F)\in\mathcal{L}\big(\mathscr{S}(\X)\big)$, is unitarily equivalent via the BFZ transformation \eqref{DF-BFZ-trsf} with a decomposable operator defined by a smooth family $\big\{\widetilde{\Op}(F)_{\xi}\big\}_{\xi\in\X^*}$ acting in $L^2(\T)$, given by:
\begin{align}\label{DF-OpA-L2T}
	\forall\xi\in\X^*,\ \forall\mathring{\varphi}\in{C}^\infty(\T),\qquad&\big(\widetilde{\Op}(F)_{\xi}\mathring{\varphi}\big)(\bz):=\big(\widehat{\Op}\big(F_{\xi}\big)_{\mathfrak{i}^*}(\mathring{\varphi}\circ\p)\big){ \big(\s(\bz)\big)}\,,
\end{align}
and having a distribution kernel that we denote by $\mathfrak{K}_{\T}[F_\xi]$. 
\end{proposition}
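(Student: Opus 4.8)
The plan is to assemble Proposition~\ref{P-60} from the pieces that precede it, so the proof is essentially a matter of checking that each ingredient does what is claimed and that the assembly is legitimate. First I would invoke Remark~\ref{R-OAF}/Theorem~A to recall that $\Op(F)$ with $F\in S^p_1(\Xi)_\Gamma$ leaves $\mathscr{S}(\X)$ invariant and commutes with the lattice translations $U(\gamma)$, and then quote \eqref{DF-OpABF}, i.e.\ Proposition~\ref{P-Fzst-inv} together with \eqref{F-PN-N-1}--\eqref{F-PN-N-2} and compactness of $\T_*$, to get the BF-decomposition $\U_{BF}\Op(F)\U_{BF}^{-1}=\int^{\oplus}_{\T_*}d\bz^*\,\widehat{\Op}(F)_{\bz^*}$. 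Next, by Remark~\ref{R-BF-BFZ}, applying the BFZ transformation instead of the BF transformation conjugates each fibre by the character $e^{i\langle\xi,\cdot\rangle}$, giving a decomposable family $\{\widetilde{\Op}(F)_\xi\}_{\xi\in\X^*}$ on $L^2(\T)$ with $\widetilde{\Op}(F)_\xi=e^{-i\langle\xi,\cdot\rangle}\widehat{\Op}(F)_{\p_*(\xi)}e^{i\langle\xi,\cdot\rangle}$; combining this with the computation \eqref{DF-OpABFZ}, which identifies $e^{-i\langle\xi,\cdot\rangle}\widehat{\Op}(F)_{\p_*(\xi)}e^{i\langle\xi,\cdot\rangle}$ with $\widehat{\Op}(F_\xi)_{\mathfrak{i}^*}$ (using Notation~\ref{N-60}), yields exactly formula \eqref{DF-OpA-L2T} once one identifies $C^\infty(\T)$-functions with their $\Gamma$-periodic lifts via $\p$ and evaluates along the section $\s$.

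The two points that require a genuine (if short) argument are: (a) that the right-hand side of \eqref{DF-OpA-L2T} is well-defined and lands in $L^2(\T)$ — indeed in $C^\infty(\T)$ — for $\mathring\varphi\in C^\infty(\T)$, and (b) that the resulting family is smooth in $\xi$, so that ``decomposable'' makes sense and the direct integral is legitimate. For (a): $\mathring\varphi\circ\p\in\mathscr{F}^\infty_{\mathfrak{i}^*}\cap BC^\infty(\X)$ since it is $\Gamma$-periodic and smooth, hence lies in the domain on which $\widehat{\Op}(F_\xi)_{\mathfrak{i}^*}$ is defined by Proposition~\ref{P-Fzst-inv} (the invariance of $\mathscr{F}^\infty_{\bz^*}$ under the canonical extension of $\Op(\cdot)$ to $\mathscr{S}'(\X)$); by Theorem~A this extension preserves smoothness, so $\widehat{\Op}(F_\xi)_{\mathfrak{i}^*}(\mathring\varphi\circ\p)$ is again a smooth $\Gamma$-quasiperiodic function with character $\chi_{\mathfrak{i}^*}\equiv 1$, i.e.\ a genuine smooth function on $\T$; precomposing with $\s$ just reads off its values on the fundamental cell, so the output is $C^\infty(\T)$. (Here one should note $F_\xi=(\bb1_{\X}\otimes\tau_\xi)F\in S^p_1(\Xi)_\Gamma$ still, since translation in $\xi$ preserves both the symbol class and $\Gamma$-periodicity in $x$, so the same results apply fibrewise.)

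For (b), smoothness in $\xi$: the cleanest route is to go back to the kernel representation. By Theorem~A the operator $\Op(F)$ has kernel $\mathfrak{K}[F]\in\mathscr{S}'(\X\times\X)$ with singular support on the diagonal and rapid off-diagonal decay, satisfying $(\tau_\gamma\otimes\tau_\gamma)\mathfrak{K}[F]=\mathfrak{K}[F]$; folding it over $\Gamma$ in the second variable and using \eqref{DF-BFZ-trsf} produces, for each $\xi$, a kernel $\mathfrak{K}_\T[F_\xi]$ on $\T\times\T$ obtained by the periodization $\sum_{\gamma\in\Gamma}\mathfrak{K}[F_\xi]\big(\s(\bz),\s(\bz')+\gamma\big)$ (convergent by the off-diagonal decay), and the dependence of $\mathfrak{K}[F_\xi]$ on $\xi$ is smooth because $\xi\mapsto F_\xi=(\bb1\otimes\tau_\xi)F$ is smooth into $S^p_1(\Xi)$ and the map $F\mapsto\mathfrak{K}[F]$ of \eqref{DF-DistrKer} is continuous. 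I would therefore (i) record that \eqref{DF-OpABFZ} identifies the fibre with $\widehat{\Op}(F_\xi)_{\mathfrak{i}^*}$ acting on $\mathring\varphi\circ\p$ and evaluated on $\E$ via $\s$, which is precisely \eqref{DF-OpA-L2T}; (ii) note that this operator has the distribution kernel on $\T\times\T$ obtained by the above periodization, which we name $\mathfrak{K}_\T[F_\xi]$; (iii) conclude smoothness of $\xi\mapsto\widetilde{\Op}(F)_\xi$ from smoothness of $\xi\mapsto\mathfrak{K}_\T[F_\xi]$, which legitimizes the direct-integral statement.

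I expect the main obstacle to be purely bookkeeping rather than conceptual: one must be careful that the formal series $\sum_{\gamma\in\Gamma}$ appearing in \eqref{DF-BFtr}, \eqref{F-PN-N-2}, and in the kernel periodization all converge in the right topologies — which is exactly what Theorem~A's rapid off-diagonal decay (and, for the action on $\mathscr{S}(\X)$, Proposition~\ref{P-UBF-S}) is there to guarantee — and that the ``canonical extension of $\Op(F)$ to $\mathscr{S}'(\X)$'' used in Proposition~\ref{P-Fzst-inv} is the same object as the one implicitly used when one writes $\widehat{\Op}(F_\xi)_{\mathfrak{i}^*}(\mathring\varphi\circ\p)$, since $\mathring\varphi\circ\p\in BC^\infty(\X)\subset\mathscr{S}'(\X)$ is not in $\mathscr{S}(\X)$. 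Both points are already settled in the preceding paragraphs of the paper (the weak-distributional extension preserving smoothness, and the rapid decay), so the proof reduces to citing them in the correct order; no new estimate is needed.
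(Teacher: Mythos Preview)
Your proposal is correct and follows essentially the same route as the paper, which does not give a separate proof but simply states that ``the above arguments lead to'' Proposition~\ref{P-60}; you have spelled out what those arguments are (Proposition~\ref{P-Fzst-inv}, \eqref{DF-OpABF}, Remark~\ref{R-BF-BFZ}, and the computation \eqref{DF-OpABFZ}) and added the well-definedness and smoothness checks that the paper leaves implicit. One small remark: your argument for smoothness in $\xi$ via kernel periodization anticipates material that the paper only develops afterwards (Proposition~\ref{P-II-40} and the regularization $F^{(N)}$); at the point of Proposition~\ref{P-60} the paper's implicit smoothness argument is simply that $\xi\mapsto F_\xi=(\bb1_\X\otimes\tau_\xi)F$ is smooth into $S^p_1(\Xi)_\Gamma$ and $\widehat{\Op}(\cdot)_{\mathfrak{i}^*}$ is continuous on that space, which is more direct and avoids forward references.
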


\begin{remark}
Putting together Proposition \ref{P-60} with our previous Remark \ref{R-F-vect-bdl}, let us denote by $\mathfrak{B\hspace*{-2pt}F}\repi\T_*$ the vector bundle associated to the principal bundle $\X^*\repi\T_*$ by the automorphism representation induced by \eqref{DF-hatUcirc} on $\mathbb{B}\big(L^2(\T)\big)$ and conclude that $\U_{BFZ}\Op(F)\U_{BFZ}^{-1}$ defines a smooth section $\T_*\ni\bz^*\mapsto\widetilde{\Op}(F)_{\s_*(\bz^*)}\in\mathfrak{B\hspace*{-2pt}F}$.
\end{remark}

\subsubsection{The distribution kernels.}

First we notice that the formula \eqref{DF-DistrKer} with $A=0$ implies the following equality with oscillating integrals for the distribution kernel of $\Op(F)$:
\beq\label{F-II-11}
\mathfrak{K}[F](x,y)=\int_{\X^*}d\eta\,e^{i<\eta,x-y>}\,F\big((x+y)/2,\eta\big).
\eeq
If $F\in{S}^p_1(\Xi)_{\Gamma}$ with $p<0$ we notice that the formula \eqref{F-com-KAF-gamma} may be written as:
\beq\label{F-II-1}
\mathfrak{K}[F](\hat{x}+\alpha,\hat{y}+\beta)=\hspace*{-0.4cm}\underset{(\alpha,\beta)\in\Gamma\times\Gamma}{\sum}\big[(\tau_{\alpha}\otimes\tau_{\alpha})\mathfrak{K}[F]\big](\hat{x},\hat{y}+\beta-\alpha)=\hspace*{-0.4cm}\underset{(\alpha,\beta)\in\Gamma\times\Gamma}{\sum}(1\otimes\tau_{\beta-\alpha})\mathfrak{K}[F](\hat{x},\hat{y}).
\eeq
In order to write a similar decomposition for the general case $p\in\R$ we shall regularize the distribution $\mathfrak{K}[F]$ using a family of cut-off functions in the variable $\eta\in\X^*$, namely:
\beq\begin{split}
	\mathfrak{K}[F]_N(x,y):=&\int_{\X^*}d\eta\,e^{i<\eta,x-y>}\,\theta_N(\eta)\,F\big((x+y)/2,\eta\big),\\
	&\theta\in{C}^\infty_0\big(\X^*;[0,1]\big),\ |\xi|\leq1/2\,\Rightarrow\,\theta(\xi)=1,\ \supp\theta\subset\{|\xi|{ \leq 1}\}\,,\\
	&\theta_N(\xi):=\theta(N^{-1}\xi),\quad\forall{N}\in\Nb\,.
\end{split}\eeq
Then, for any ${N}\in\Nb$ the kernel $\mathfrak{K}_N[F]$ is the Fourier transform of a smooth compactly supported function { in the $\eta$ variable} $F^{(N)}(z,\eta):=\theta_N(\eta)F(z,\eta)$ and thus it is a function of class $\mathscr{S}(\X\times\X)$. 
Moreover $F^{(N)}(z,\eta):=\theta_N(\eta)F(z,\eta)$ still defines a H\"{o}rmander type symbol $\Gamma$-periodic in the first variable $z\in\X$.
As $F^{(N)}$ belongs to the H\"{o}rmander class $S^{-\infty}(\Xi)_{\Gamma}\subset{S}^p_1(\Xi)_{\Gamma}$, we deduce that the proof of Proposition \ref{P-Fzst-inv} remains true for $\Op(F^{(N)})=\Op(F)_N$ and thus the extension of $\Op(F)_N$ at $\mathscr{S}^\prime(\X)$ leaves invariant any $\mathscr{F}_{\bz^*}$, for all $N\in\Nb$ defining the operators $\widehat{\Op}(F^{(N)})_{\bz^*}$. The commutation relations \eqref{F-com-KAF-gamma} remain valid for $\mathfrak{K}[F^{(N)}]$ for any $N\in\Nb$.

\begin{proposition}\label{P-II-40}
	Fixing any $\bz^*\in\T_*$, for any $N\in\Nb$, $\widehat{\Op}(F^{(N)})_{\bz^*}$ has the following structure  as operator on $\mathscr{F}^\infty_{\bz^*}\subset\mathscr{E}(\X)$:
	\beq\label{bpm1} \begin{split}
		&\widehat{\Op}(F^{(N)})_{\bz^*}\,=\,\Int_{\X}\mathfrak{K}[F^{(N)}]_{\bz^*},\\
		&\mathfrak{K}[F^{(N)}]_{\bz^*}(\hat{x},\hat{y})=\underset{\beta\in\Gamma}{\sum}\hat{\chi}_{\bz^*}(\beta)\,\mathfrak{K}[F^{(N)}](\hat{x},\hat{y}+\beta)\,.
	\end{split}\eeq
\end{proposition}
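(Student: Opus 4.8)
\emph{Proof proposal.} Since $F^{(N)}\in S^{-\infty}(\Xi)_{\Gamma}$, the remarks preceding the statement (together with Theorem A) tell us that $\mathfrak{K}[F^{(N)}]$ is a genuine smooth function on $\X\times\X$ which is $\Gamma$-periodic along the diagonal, cf.\ \eqref{F-com-KAF-gamma}, and of Schwartz class in the off-diagonal direction uniformly in the diagonal one; in particular $\mathfrak{K}[F^{(N)}](x,\cdot)\in L^{1}(\X)$ for every $x$, with an $L^{1}$-norm bounded in $x$. The plan is to show that, on $\mathscr{F}^{\infty}_{\bz^*}$, the fibre operator $\widehat{\Op}(F^{(N)})_{\bz^*}$ acts as the integral operator on the cell $\E$ whose kernel is obtained by ``periodizing'' $\mathfrak{K}[F^{(N)}]$ in its second variable with the weights $\hat{\chi}_{\bz^*}$; everything then reduces to a rearrangement of absolutely convergent integrals and series.

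First I would make the weak-$\mathscr{S}'$ extension of $\Op(F^{(N)})$ explicit on bounded smooth functions. Every $\hat{\phi}\in\mathscr{F}^{\infty}_{\bz^*}$ is bounded on $\X$: since $|\hat{\chi}_{\bz^*}(\beta)|=1$ one has $|\hat{\phi}(\hat{x}+\beta)|=|\hat{\phi}(\hat{x})|$, and $\hat{\phi}$ is continuous on the relatively compact cell $\E$. Hence for $\psi\in\mathscr{S}(\X)$ the identity $(\psi,\Op(F^{(N)})\hat\phi)_{L^2(\X)}=\langle\mathfrak{K}[F^{(N)}],\overline{\psi}\otimes\hat{\phi}\rangle$ presents an absolutely convergent double integral, and Fubini yields
\[
\big(\Op(F^{(N)})\hat{\phi}\big)(x)=\int_{\X}\mathfrak{K}[F^{(N)}](x,y)\,\hat{\phi}(y)\,dy ,
\]
the right-hand side being absolutely convergent thanks to $\mathfrak{K}[F^{(N)}](x,\cdot)\in L^{1}(\X)$ and the boundedness of $\hat{\phi}$. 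By Proposition \ref{P-Fzst-inv} the resulting function lies again in $\mathscr{F}^{\infty}_{\bz^*}$, so it is determined by its restriction to $\E$, which is $\widehat{\Op}(F^{(N)})_{\bz^*}\hat{\phi}$.

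Next I would restrict $x=\hat{x}\in\E$ and unfold the $y$-integration over the partition $\X=\bigsqcup_{\beta\in\Gamma}(\E+\beta)$: writing $y=\hat{y}+\beta$ and using the quasi-periodicity $\hat{\phi}(\hat{y}+\beta)=\hat{\chi}_{\bz^*}(\beta)\hat{\phi}(\hat{y})$ that characterizes $\mathscr{F}_{\bz^*}$, one gets
\[
\big(\widehat{\Op}(F^{(N)})_{\bz^*}\hat{\phi}\big)(\hat{x})=\sum_{\beta\in\Gamma}\hat{\chi}_{\bz^*}(\beta)\int_{\E}\mathfrak{K}[F^{(N)}](\hat{x},\hat{y}+\beta)\,\hat{\phi}(\hat{y})\,d\hat{y}.
\]
The interchange of the $\Gamma$-sum with the $\E$-integral is justified by
\[
\sum_{\beta\in\Gamma}\int_{\E}\big|\mathfrak{K}[F^{(N)}](\hat{x},\hat{y}+\beta)\big|\,\big|\hat{\phi}(\hat{y})\big|\,d\hat{y}\ \le\ \|\hat{\phi}\|_{\infty}\,\int_{\X}\big|\mathfrak{K}[F^{(N)}](\hat{x},y)\big|\,dy\ <\ \infty ,
\]
uniformly for $\hat{x}\in\E$; the same bound shows that the series defining $\mathfrak{K}[F^{(N)}]_{\bz^*}$ converges absolutely and uniformly on $\E\times\E$ to a continuous function. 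Pulling the sum inside the integral turns the previous display into exactly \eqref{bpm1}.

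The only slightly delicate point — which I would single out as the main, though mild, obstacle — is the identification, on $\mathscr{F}^{\infty}_{\bz^*}$, of the abstractly defined operator $\widehat{\Op}(F^{(N)})_{\bz^*}$ of Notation \ref{N-Fzst-inv} (the restriction to $\mathscr{F}_{\bz^*}$ of the weak extension of $\Op(F^{(N)})$ to $\mathscr{S}'(\X)$) with the concrete integral operator written above; this is precisely what the Fubini step of the second paragraph provides, and it hinges on $F^{(N)}$ being smoothing, so that $\mathfrak{K}[F^{(N)}]$ is a true integrable function rather than a genuine distribution. Everything else is a routine rearrangement of absolutely convergent sums and integrals.
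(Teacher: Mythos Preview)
Your proof is correct and follows essentially the same route as the paper: restrict to the cell, unfold the $y$-integral along $\X=\bigsqcup_{\beta\in\Gamma}(\E+\beta)$, and use the quasi-periodicity of $\hat{\phi}$ to extract the weights $\hat{\chi}_{\bz^*}(\beta)$. The only cosmetic difference is that the paper identifies the action of $\Op(F^{(N)})$ on $\hat{\phi}\in\mathscr{F}^{\infty}_{\bz^*}$ via a truncation $\hat{\phi}_M=\big(\sum_{|\gamma|\leq M}\rho\circ\tau_{\gamma}\big)\hat{\phi}$ built from a $\Gamma$-partition of unity and then lets $M\nearrow\infty$, whereas you bypass the truncation by observing directly that the Schwartz off-diagonal decay of $\mathfrak{K}[F^{(N)}]$ together with the boundedness of $\hat{\phi}$ already makes all integrals and sums absolutely convergent; for the smoothing symbol $F^{(N)}$ both justifications are equally valid.
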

\begin{proof}
	Let us fix two functions $\hat{\phi}$ and $\hat{\psi}$ in $\mathscr{F}^\infty_{\bz^*}$ and use Proposition \ref{P-Fzst-inv} in order to compute:
	\begin{align*}
		\big(\hat{\psi}\,,\,\widehat{\Op}(F^{(N)})_{\bz^*}\hat{\phi}\big)_{\bz^*}&=\underset{M\nearrow\infty}{\lim}\int_{\E}d\hat{x}\,\overline{\hat{\psi}(\hat{x})}\int_{\X}dy\,\mathfrak{K}[F^{(N)}](\hat{x},y)\,\hat{\phi}_M(y)\\
		&\hspace*{-2.5cm}=\underset{M\nearrow\infty}{\lim}\int_{\E}d\hat{x}\,\overline{\hat{\psi}(\hat{x})}\int_{\E}d\hat{y}\hat{\phi}(\hat{y})\underset{\beta\in\Gamma}{\sum}\mathfrak{K}[F^{(N)}](\hat{x},\hat{y}+\beta)\hat{\chi}_{\bz^*}(\beta)\underset{|\gamma|\leq{M}}{\sum}\rho(\hat{y}+\beta+\gamma).
	\end{align*}
\end{proof} 
\begin{proposition}
For any $N\in\Nb$ and for $\bz^*=\mathfrak{i}^*$ the integral kernel $\mathfrak{K}[F^{(N)}]_{\mathfrak{i}^*}$ extends to a smooth $\Gamma\times\Gamma$-periodic function.
\end{proposition}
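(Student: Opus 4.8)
The plan, for each fixed $N\in\Nb$, is to exploit that $\bz^*=\mathfrak{i}^*$ is the neutral element of $\T_*$ (see \eqref{DF-1}), so that the character occurring in \eqref{bpm1} is trivial: by \eqref{DF-char-zst} one has $\hat{\chi}_{\mathfrak{i}^*}(\beta)=(\mathfrak{i}^*)^{\beta}=1$ for every $\beta\in\Gamma$. Hence Proposition \ref{P-II-40} gives simply the bare lattice average
$$\mathfrak{K}[F^{(N)}]_{\mathfrak{i}^*}(\hat{x},\hat{y})\,=\,\sum_{\beta\in\Gamma}\mathfrak{K}[F^{(N)}](\hat{x},\hat{y}+\beta)\,,$$
and it remains to show that the right-hand side, read as a series in $(x,y)\in\X\times\X$, converges together with all its partial derivatives to an element of $C^\infty(\X\times\X)$ that is $\Gamma$-periodic in each of its two arguments separately, hence $\Gamma\times\Gamma$-periodic (so that it descends to a smooth function on $\T\times\T$).

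First I would record the two facts about the single kernel $\mathfrak{K}[F^{(N)}]$ that drive the argument. Since $F^{(N)}=\theta_N F$ belongs to $S^{-\infty}(\Xi)_{\Gamma}$, the oscillating integral \eqref{F-II-11} for $\mathfrak{K}[F^{(N)}]$ is an absolutely convergent integral of a smooth integrand that is compactly supported in $\eta$, so $\mathfrak{K}[F^{(N)}]\in C^\infty(\X\times\X)$; differentiating under the integral sign and integrating by parts in $\eta$, while using that all derivatives of $F^{(N)}$ in its first variable are bounded \emph{uniformly} (by $\Gamma$-periodicity in that variable), one obtains for every pair of multi-indices $(a,b)$ and every $M\in\mathbb{N}$ an estimate
$$\big|\big(\partial_x^a\partial_y^b\mathfrak{K}[F^{(N)}]\big)(x,y)\big|\,\leq\,C_{a,b,M}\,<x-y>^{-M}\,,\qquad\forall(x,y)\in\X\times\X\,;$$
this is the uniform rapid off-diagonal decay of Theorem A, valid here across the diagonal as well since the symbol is smoothing. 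Second, \eqref{F-com-KAF-gamma} applied to $F^{(N)}$ gives the diagonal invariance $\mathfrak{K}[F^{(N)}](x+\gamma,y+\gamma)=\mathfrak{K}[F^{(N)}](x,y)$ for all $\gamma\in\Gamma$.

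Convergence and smoothness then follow by a routine argument: on any compact $K\subset\X\times\X$ one has $|x-y-\beta|\geq|\beta|-\mathrm{diam}(K)$, so choosing $M>d$ in the estimate above makes $\sum_{\beta\in\Gamma}\big(\partial_x^a\partial_y^b\mathfrak{K}[F^{(N)}]\big)(x,y+\beta)$ absolutely and uniformly convergent on $K$ for every $(a,b)$; by the classical theorem on term-by-term differentiation of locally uniformly convergent series, the sum lies in $C^\infty(\X\times\X)$ and its derivatives are the corresponding sums. For the periodicity I would argue in two independent steps. Replacing $y$ by $y+\gamma$ and reindexing $\beta\mapsto\beta+\gamma$ leaves the series unchanged — here the triviality of the character is exactly what removes the phase factor $\hat{\chi}_{\mathfrak{i}^*}(-\gamma)$ that would otherwise appear — so the sum is $\Gamma$-periodic in $y$. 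Replacing $x$ by $x+\gamma$, using the diagonal invariance to write $\mathfrak{K}[F^{(N)}](x+\gamma,y+\beta)=\mathfrak{K}[F^{(N)}](x,y+\beta-\gamma)$, and reindexing again, shows the sum is $\Gamma$-periodic in $x$. Thus $\mathfrak{K}[F^{(N)}]_{\mathfrak{i}^*}$ extends to a smooth function on $\X\times\X$ invariant under $(x,y)\mapsto(x+\gamma,y+\gamma')$ for all $\gamma,\gamma'\in\Gamma$.

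The one point requiring care — the natural main obstacle — is the \emph{uniformity} in $x+y$ of the off-diagonal decay estimate: the constants $C_{a,b,M}$ must not depend on $x+y$, and this is precisely where the $\Gamma$-periodicity of $F$ in its first argument (hence boundedness of all its first-variable derivatives) is essential, since without it the lattice sum over $\beta$ would have no reason to converge. Everything else is bookkeeping with reindexing the lattice sum and the standard theorem on differentiating a locally uniformly convergent series term by term.
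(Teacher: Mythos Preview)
Your proof is correct and follows essentially the same approach as the paper: the triviality of the character at $\mathfrak{i}^*$, the diagonal $\Gamma$-invariance \eqref{F-com-KAF-gamma} of $\mathfrak{K}[F^{(N)}]$, and a reindexing of the lattice sum. The paper's own argument is terser --- it only writes out the periodicity computation (handling the shift by $(\alpha,\beta)\in\Gamma\times\Gamma$ in one step rather than two) and leaves the smoothness and convergence of the series implicit in the earlier remarks about $\mathfrak{K}[F^{(N)}]$; your explicit verification of the uniform off-diagonal decay, and the observation that $\Gamma$-periodicity in the first symbol variable is what makes the constants uniform in $x+y$, fills in exactly what the paper glosses over.
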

\begin{proof}
For any pair $(\hat{x},\hat{y})\in\E\times\E$, using the commutation relations \eqref{F-com-KAF-gamma} for $\mathfrak{K}[F]_N$ we have the equalities:
\begin{align*}
\underset{\gamma\in\Gamma}{\sum}\hat{\chi}_{\mathfrak{i}^*}(\gamma)\,\mathfrak{K}[F]_N(\hat{x}+\alpha,\hat{y}+\beta+\gamma)=\underset{\gamma\in\Gamma}{\sum}\mathfrak{K}[F]_N(\hat{x},\hat{y}+\beta-\alpha+\gamma)\,=\,\underset{\tgamma\in\Gamma}{\sum}\mathfrak{K}[F]_N(\hat{x},\hat{y}+\tgamma).
\end{align*}
\end{proof}
\begin{definition}
We define the following distribution kernels in $\mathscr{D}^\prime(\T\times\T)$:
\beq
\forall{N}\in\Nb:\quad\mathfrak{K}_{\T}[F^{(N)}]:=\mathfrak{K}[F^{(N)}]_{\mathfrak{i}^*}\circ(\s\otimes\s).
\eeq
\end{definition}

\begin{proposition}\label{P-II-10}
	Supose given $F\in{S}^p_1(\Xi)_{\Gamma}$ and let us fix $\hat{\phi}\in\mathscr{F}_{\bz^*}$ for some $\bz^*\in\T_*$. Then given any $\epsilon>0$, for any $\mu\in\mathbb{N}^d$ and any $n\geq(p+d+1)/2$, there exists some constant $C(n,\mu,\nu)>0$ and some $N^{\prime\prime}_\epsilon\in\Nb$ such that for any $N\geq{N}^{\prime\prime}_\epsilon$:
	\begin{align}\label{F-II-10}
	&\underset{x\in\X}{\sup}\big|\big[\partial_x^\mu\big(\widehat{\Op}(F)_{\bz^*}-\widehat{\Op}(F^{(N)})_{\bz^*}\big)\hat{\phi}\big](x)\big|\leq\\ \nonumber
	&\quad\leq\,C(n,\mu)\,\epsilon\,\Big[\underset{(x,\xi)\in\Xi}{\sup}\ \underset{|\nu|\leq|\mu|+p+d+1}{\max}\ \underset{|\nu^\prime|\leq2n}{\max}\big[\partial_x^\nu\partial_\xi^{\nu^\prime}F\big](x,\xi)\big|\Big]\Big[\underset{x\in\X}{\sup}\underset{|\nu^{\prime\prime}|\leq p+d+1}{\max}\big|\big[\partial_x^{\nu^{\prime\prime}}\hat{\phi}\big](x)\big|\Big].
\end{align}
\end{proposition}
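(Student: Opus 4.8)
The plan is to estimate the difference $\big(\widehat{\Op}(F)_{\bz^*}-\widehat{\Op}(F^{(N)})_{\bz^*}\big)\hat\phi$ by going back to the oscillatory-integral expression \eqref{F-II-11} for the kernels. Writing $G_N:=F-F^{(N)}=(1-\theta_N)F$, which is a symbol supported in $\{|\eta|\geq N/2\}$ and still $\Gamma$-periodic in the first variable, we have
$$\big[\big(\widehat{\Op}(F)_{\bz^*}-\widehat{\Op}(F^{(N)})_{\bz^*}\big)\hat\phi\big](x)=\underset{\gamma\in\Gamma}{\sum}\hat\chi_{\bz^*}(\gamma)\int_\X dy\int_{\X^*}d\eta\,e^{i<\eta,x-y>}\,G_N\big((x+y)/2,\eta\big)\,\big[(\rho\circ\tau_\gamma)\hat\phi\big](y)\,,$$
where we have inserted the $\Gamma$-partition of unity as in the proof of Proposition \ref{P-II-40}. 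The idea is now a standard non-stationary-phase / integration-by-parts argument in both the $\eta$ and the $y$ variables, exploiting the fact that $G_N$ vanishes for $|\eta|\leq N/2$.

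The key steps, in order, are the following. First, differentiate under the integral sign with respect to $x$: each $\partial_{x_j}$ either hits $e^{i<\eta,x-y>}$, pulling down a factor $\eta_j$, or hits $G_N$, producing another symbol with the same support and decay properties; so after applying $\partial_x^\mu$ we get a finite sum of integrals of the same type with integrands $\eta^{\mu'}\big(\partial_x^{\mu''}G_N\big)((x+y)/2,\eta)$, $|\mu'|+|\mu''|\leq|\mu|$, each controlled by the seminorms of $F$ appearing on the right of \eqref{F-II-10}. Second, gain decay in $\eta$: using $\langle\eta\rangle^{-2n}(1-\Delta_y)^n e^{i<\eta,x-y>}=e^{i<\eta,x-y>}$ (up to the constant from the $2\pi$ in the pairing) and integrating by parts $2n$ times in $y$, the Laplacian falls on $G_N\big((x+y)/2,\eta\big)$ and on $(\rho\circ\tau_\gamma)\hat\phi$; since $G_N\in S^p_1$ and is supported in $|\eta|\geq N/2$, we bound $\langle\eta\rangle^{-2n}|\partial^{\nu'}_\eta(\text{stuff})|$ by $\langle\eta\rangle^{p+|\mu'|-2n}\mathbf 1_{\{|\eta|\geq N/2\}}$ times the indicated seminorms, and for $2n\geq p+d+1+|\mu|$ this is integrable in $\eta$ with $\int_{|\eta|\geq N/2}\langle\eta\rangle^{p+|\mu|-d-1-|\mu|}\,d\eta=\int_{|\eta|\geq N/2}\langle\eta\rangle^{-d-1}\,d\eta\to 0$ as $N\to\infty$; choosing $N''_\epsilon$ so that this tail is $\leq\epsilon$ handles the $\epsilon$ factor. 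Third, sum over $\gamma\in\Gamma$: after the $y$-integration-by-parts the $y$-integrand is supported in $\supp(\rho\circ\tau_\gamma)$, and the factors $\partial_y^{\nu}\big[(\rho\circ\tau_\gamma)\hat\phi\big]$ are dominated by $\sup_{|\nu''|\leq p+d+1}\sup_x|\partial_x^{\nu''}\hat\phi(x)|$ times translates of derivatives of $\rho$ (note $\partial_y^\nu$ of $\hat\phi$ never exceeds order $2n$, but the order $p+d+1$ suffices once one also integrates by parts to move some derivatives onto $\rho$); since the supports $\supp(\rho\circ\tau_\gamma)$ have bounded overlap and $\rho$ is compactly supported, $\sum_\gamma\mathbf 1_{\supp(\rho\circ\tau_\gamma)}(y)$ is bounded uniformly in $y$, so the $\gamma$-sum only contributes a finite constant. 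Collecting the three steps and absorbing all combinatorial constants into $C(n,\mu)$ gives \eqref{F-II-10}.

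The main obstacle is the bookkeeping in the second step: one must integrate by parts in $y$ \emph{enough} times to make the $\eta$-integral of the tail $\{|\eta|\geq N/2\}$ both absolutely convergent \emph{and} small, while simultaneously keeping the number of $y$-derivatives that land on $\hat\phi$ no larger than $p+d+1$ as claimed in the statement — this forces a careful split of the $2n$ integrations by parts, putting $p+d+1$ derivatives' worth onto $\hat\phi$ and routing the remaining ones onto $\rho$ (which is harmless, being a fixed $C^\infty_0$ function) and onto $G_N$ (which only improves the $\eta$-decay, feeding the factor $\langle\eta\rangle^{-2n}$). Equivalently, one first integrates by parts $p+d+1$ times distributing derivatives by Leibniz, then a further $2n-(p+d+1)$ times placing all extra derivatives on $\rho$ and $G_N$ only. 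One also has to be mildly careful that $\mathfrak{K}[F^{(N)}]_{\bz^*}$ really does represent $\widehat{\Op}(F^{(N)})_{\bz^*}$ pointwise on smooth quasi-periodic functions, which is exactly the content of Proposition \ref{P-II-40} together with the rapid off-diagonal decay from Theorem A; everything else is routine symbol calculus.
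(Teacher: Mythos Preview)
Your approach differs from the paper's and contains a genuine gap in the ``routing'' step. The paper does \emph{not} insert a $\Gamma$-partition of unity: instead it uses the $\Gamma$-periodicity of $F$ and the quasi-periodicity of $\hat\phi$ to reduce the value at $x=\hat{x}+\alpha$ to a single oscillatory integral at $\hat{x}$ (times the phase $e^{-i<\s_*(\bz^*),\alpha>}$), with no $\gamma$-sum at all. Then it integrates by parts in \emph{both} variables: $n$ times with $(1-\Delta_\eta)$, producing the factor $\langle\hat{x}-y\rangle^{-2n}$ that makes the $y$-integral over all of $\X$ absolutely convergent (and accounting for the $\partial_\xi^{\nu'}F$ with $|\nu'|\leq 2n$ in the statement), and separately about $(p+d+1)/2$ times with $(1-\Delta_y)$, producing $\langle\eta\rangle^{-(p+d+1)}$ and putting at most $p{+}d{+}1$ $y$-derivatives on $F$ and on $\hat\phi$. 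The smallness in $N$ then comes from $\int_{\X^*}\langle\eta\rangle^{-(d+1)}(1-\theta_N(\eta))\,d\eta\to 0$.

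Your scheme integrates by parts only in $y$, using $(1-\Delta_y)^n$ to manufacture $\langle\eta\rangle^{-2n}$. The problem is the claim that you can ``route'' the extra $2n-(p{+}d{+}1)$ derivatives onto $\rho$ and $G_N$ only: when you transfer $(1-\Delta_y)^n$ from the exponential onto the product $G_N\big((x+y)/2,\eta\big)\,(\rho\circ\tau_\gamma)(y)\,\hat\phi(y)$, Leibniz forces terms with up to $2n$ derivatives landing on $\hat\phi$; there is no mechanism to prevent this in a plain integration by parts. (Your passing mention of $\partial_\eta^{\nu'}$ also suggests a mix-up: your argument as written never differentiates in $\eta$.) Consequently your argument yields a bound with $\sup_{|\nu''|\leq 2n}|\partial^{\nu''}\hat\phi|$ rather than $\sup_{|\nu''|\leq p+d+1}|\partial^{\nu''}\hat\phi|$, and it also needs $2n\geq p+d+1+|\mu|$ rather than the stated $2n\geq p+d+1$. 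The missing idea is precisely the second integration by parts, in $\eta$, which the paper uses to get the $y$-decay without spending any further $y$-derivatives on $\hat\phi$; once you add that (and drop the partition of unity, which is then unnecessary), your argument collapses into the paper's.
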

\begin{proof} Let us write:
\begin{align}
	&\big[\big(\widehat{\Op}(F)_{\bz^*}-\widehat{\Op}(F^{(N)})_{\bz^*}\big)\hat{\phi}\big](\hat{x}+\alpha)\\ \nonumber
	&\qquad=\int_{\X}dy\Big[\int_{\X^*}d\eta\,e^{i<\eta,\hat{x}-y+\alpha>}\,\big(1-\theta_N(\eta)\big)F\big((\hat{x}+y+\alpha)/2,\eta\big)\Big]\hat{\phi}(y)
\end{align}
\begin{align*}
&\qquad=e^{-i<\s_*(\bz^*),\alpha>}\int_{\X}dy\Big[\int_{\X^*}d\eta\,e^{i<\eta,\hat{x}-(y-\alpha)>}\,\big(1-\theta_N(\eta)\big)
{F\big((\hat{x}+y-\alpha)/2+\alpha,\eta\big)}\Big]\hat{\phi}(y-\alpha)\\
&\qquad=e^{-i<\s_*(\bz^*),\alpha>}\int_{\X}dy
\Big[\int_{\X^*}d\eta\,
e^{i<\eta,\hat{x}-y>}\,\big(1-\theta_N(\eta)\big)
F\big((\hat{x}+y)/2,\eta\big)\Big]\hat{\phi}(y)\\
&\qquad=e^{-i<\s_*(\bz^*),\alpha>}\int_{\X}dy<y>^{-2n}\int_{\X^*}d\eta\,<\eta>^{-(p+d+1)}e^{i<\eta,\hat{x}-y>}\,\times\\
&\hspace*{4cm}\Big[\big(\bb1-\Delta_{y}\big)^{(p+d+1)}\Big(\big(\bb1-\Delta_{\eta}\big)^n\big(1-\theta_N(\eta)\big)F\big((\hat{x}+y)/2,\eta\big)\Big)\hat{\phi}(y)\Big]
\end{align*}
and some usual integration by parts using the oscillating factor $e^{i<\eta,x-y>}$ allows to obtain the desired estimation taking into account that:
\begin{align}
	\int_{\X^*}d\eta\,<\eta>^{-(p+d+1)}\big(1-\theta_N(\eta)\big)\,\underset{N\nearrow\infty}{\longrightarrow}\,0,\quad\big(\partial_\eta^{\kappa}\theta_N\big)(\eta)\,\underset{N\nearrow\infty}{\text{\LARGE{$\sim$}}}\,N^{-|\kappa|},\ \forall\kappa\in\mathbb{N}^d.
\end{align}
\end{proof}
\begin{corollary}\label{C-II-10}
The limit $\widehat{\Op}(F^{(N)})_{\bz^*}\,\underset{N\nearrow\infty}{\longrightarrow}\,\widehat{\Op}(F)_{\bz^*}$ exists uniformly on bounded subsets of $\mathscr{F}_{\bz^*}$ for any $\bz^*\in\T_*$. 
\end{corollary}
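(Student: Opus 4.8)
Fix $\bz^*\in\T_*$. The plan is to read the Corollary off directly from the quantitative estimate \eqref{F-II-10} of Proposition \ref{P-II-10}, so the only real work is to pin down the topology in which the convergence is asserted. By Proposition \ref{P-Fzst-inv} and Proposition \ref{P-II-40}, each of $\widehat{\Op}(F)_{\bz^*}$ and $\widehat{\Op}(F^{(N)})_{\bz^*}$, $N\in\Nb$, is a linear operator on $\mathscr{F}^\infty_{\bz^*}=\mathscr{F}_{\bz^*}\cap C^\infty(\X)$, and I would equip $\mathscr{F}^\infty_{\bz^*}$ with the Fréchet topology induced from $\mathscr{E}(\X)$. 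Since every $\hat\phi\in\mathscr{F}^\infty_{\bz^*}$ satisfies $\tau_\gamma\hat\phi=\chi_{\bz^*}(\gamma)\hat\phi$ with $|\chi_{\bz^*}(\gamma)|=1$, one has $\sup_{x\in\X}|(\partial_x^\mu\hat\phi)(x)|=\sup_{x\in\overline{\E}}|(\partial_x^\mu\hat\phi)(x)|<\infty$ for every $\mu\in\mathbb{N}^d$; in particular $\mathscr{F}^\infty_{\bz^*}\subset BC^\infty(\X)$, and this topology is the one generated by the seminorms $q_\mu(\hat\phi):=\sup_{x\in\X}|(\partial_x^\mu\hat\phi)(x)|$. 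Thus a set $\mathcal{B}\subset\mathscr{F}^\infty_{\bz^*}$ is bounded precisely when $\sup_{\hat\phi\in\mathcal{B}}q_\mu(\hat\phi)<\infty$ for every $\mu$, and ``uniform convergence on bounded subsets'' means: for every such $\mathcal{B}$ and every $\mu$,
\[
\sup_{\hat\phi\in\mathcal{B}}\,q_\mu\Big(\big(\widehat{\Op}(F)_{\bz^*}-\widehat{\Op}(F^{(N)})_{\bz^*}\big)\hat\phi\Big)\;\underset{N\nearrow\infty}{\longrightarrow}\;0.
\]

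Next I would fix $\mu\in\mathbb{N}^d$, a bounded set $\mathcal{B}\subset\mathscr{F}^\infty_{\bz^*}$, and an integer $n\geq(p+d+1)/2$. Applying Proposition \ref{P-II-10} with this $\mu$ and $n$ yields, for each $\epsilon>0$, a threshold $N''_\epsilon\in\Nb$ depending only on $\epsilon$ (and on $p,d,\mu,n$), not on the particular $\hat\phi$, such that \eqref{F-II-10} holds for all $N\geq N''_\epsilon$ and all $\hat\phi\in\mathscr{F}^\infty_{\bz^*}$. On its right-hand side the factor built from $F$ is a fixed finite number (since $F\in S^p_1(\Xi)_\Gamma$), and the factor $Q(\hat\phi):=\sup_{x\in\X}\max_{|\nu''|\leq p+d+1}|(\partial_x^{\nu''}\hat\phi)(x)|$ satisfies $Q(\hat\phi)\leq C_\mathcal{B}:=\sup_{\hat\phi\in\mathcal{B}}Q(\hat\phi)<\infty$ for $\hat\phi\in\mathcal{B}$, by boundedness of $\mathcal{B}$. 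Hence, with $M$ denoting the product of $C(n,\mu)$, that $F$-factor, and $C_\mathcal{B}$ — a constant independent of $\epsilon$ and of $\hat\phi\in\mathcal{B}$ — we get
\[
\sup_{\hat\phi\in\mathcal{B}}\,q_\mu\Big(\big(\widehat{\Op}(F)_{\bz^*}-\widehat{\Op}(F^{(N)})_{\bz^*}\big)\hat\phi\Big)\;\leq\;M\,\epsilon\qquad\text{for all }N\geq N''_\epsilon.
\]
Since $\epsilon>0$ is arbitrary, this is exactly the displayed convergence; letting $\mu$ and $\mathcal{B}$ range over all admissible choices gives the Corollary, and no separate identification of the limit is needed because \eqref{F-II-10} already bounds the difference to the a priori defined operator $\widehat{\Op}(F)_{\bz^*}$.

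I do not foresee a genuine obstacle here: Proposition \ref{P-II-10} carries out all the analysis and the passage to the Corollary is a soft argument. The one point to handle with care is conceptual — ``bounded subsets of $\mathscr{F}_{\bz^*}$'' must be read in the $C^\infty$ (equivalently $BC^\infty$) Fréchet structure on $\mathscr{F}^\infty_{\bz^*}$ and not in the Hilbert norm of $\mathscr{F}_{\bz^*}$, since for symbols of positive order $\widehat{\Op}(F)_{\bz^*}$ is unbounded on that Hilbert space and no operator-norm convergence can hold. Technically, the only thing one should double-check inside the proof of Proposition \ref{P-II-10} is that $N''_\epsilon$ may be chosen independently of $\hat\phi$ — this is clear, as it enters only through $\int_{\X^*}d\eta\,<\eta>^{-(p+d+1)}\big(1-\theta_N(\eta)\big)\to0$ and through $\partial_\eta^\kappa\theta_N\sim N^{-|\kappa|}$, neither of which involves $\hat\phi$ — and it is precisely this uniformity that upgrades the per-$\hat\phi$ bound to uniform convergence over $\mathcal{B}$.
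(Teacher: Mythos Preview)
Your proposal is correct and follows exactly the approach the paper intends: the corollary is stated without a separate proof, as an immediate consequence of the quantitative estimate \eqref{F-II-10} in Proposition \ref{P-II-10}. Your added clarification that ``bounded subsets of $\mathscr{F}_{\bz^*}$'' must be read in the Fr\'echet $BC^\infty$ topology on $\mathscr{F}^\infty_{\bz^*}$ rather than the Hilbert norm is a useful precision the paper leaves implicit.
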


We are now able to formulate one of the main results of our paper:
\begin{theorem}\label{T-main} 
Given a symbol $F\in{S}^p_1(\Xi)_{\Gamma}$ for some $p\in\R$, the operator $\Op(F)\in\mathcal{L}\big(\mathscr{S}(\X)\big)$, is unitarily equivalent via the BFZ transformation \eqref{DF-BFZ-trsf} with a smooth map:
\beq
\X^*\ni\xi\,\mapsto\,\widetilde{\Op}(F)_{\xi}\in\mathcal{L}\big(C^\infty(\T)\big)
\eeq
that has the properties:
\begin{enumerate}[A. ]
\item  For any $\gamma^*\in\Gamma_*$ we have the identity: $\widetilde{\Op}(F)_{\xi+\gamma^*}\,=\,\hat{\chi}_{\gamma^*}\widetilde{\Op}(F)_{\xi}\hat{\chi}_{\gamma^*}^{-1}$.
\item 
{Let us consider the decomposition $\Gamma\ni\tgamma=2\gamma+\kappa\in[2\Gamma]+\Sigma_1$ where
\beq\label{F-II-31}
[2\Gamma]\,:=\,\underset{1\leq j\leq d}{\bigoplus}[2\Z]\e_j\,\subset\,\Gamma,\qquad\Sigma_1\,:=\,\big\{\kappa\in\Gamma,\ \kappa_j\in\{0,1\}\,\forall{j}\in\underline{d}\}\big\}\subset\Gamma,
\eeq
and similarly for the dual indices $\gamma^*\in\Gamma^*$: 
$\gamma^*=2\alpha^*+\kappa^*$ with $\alpha^*\in\Gamma_*$ and $\kappa^*\in\Sigma_1^*$. Then 
$\widetilde{\Op}(F)_{\xi}$} has a distribution kernel $\mathfrak{K}[F_\xi]\in\mathscr{D}^\prime(\T\times\T)$ given by: 
\beq
\mathfrak{K}_\T[F_\xi]=\big[\big(\bb1\otimes\mathcal{F}_{\Gamma_*}\big)\,\Phi_{\bullet}[F_{\xi}]\big]\circ(\s\otimes\s)
\eeq
where $\Phi_{\bullet}[F]$ is a uniformly polynomially bounded $\Gamma_*$-sequence of smooth functions on $\X\times\X$ defined by:
\begin{align}\label{F-53-2-St}
	&\Phi_{\alpha^*}[F^{(N)}_\xi](x,y)=\\ \nonumber
    &\hspace*{1cm}=\underset{\kappa^*\in\Sigma_1^*}{\sum}e^{(i/2)<\kappa^*,x-y>}\underset{\kappa\in\Sigma_1}{\sum}\big[\frac{e^{(i/2)<\kappa^*,\kappa>}}{2^d}F^{(N)}_\xi\big((x+y+\kappa)/2,\alpha^*+(1/2)\kappa^*\big)\big].
\end{align}
{This function is also} $\Gamma\times\Gamma$-periodic, i.e. $\Phi_{\alpha^*}[F^{(N)}_\xi](x+\alpha,y+\beta)=\Phi_{\alpha^*}[F^{(N)}_\xi](x,y)$ for any $(\alpha,\beta)\in\Gamma\times\Gamma$.
\item {Let us consider the discrete Fourier decomposition of the symbol $F\in{S}^p_1(\Xi)_{\Gamma}$ with respect to its first variable in $\X$:
\beq\label{F-61}
\widehat{F}_{\alpha^*}(\xi)\,:=\,\int_{\T}d\bz\,e^{i<\alpha^*,\s(\bz)>}\,F\big(\s(\bz),\xi\big).
\eeq Then the fibre $\widetilde{\Op}(F)_{\xi}$} is unitarily equivalent via the inverse discrete Fourier transform $\mathcal{F}_{\T}:L^2(\T)\overset{\sim}{\longrightarrow}\ell^2(\Gamma_*)$ with an operator on $\ell^2(\Gamma_*)$ having the following matrix elements with respect to the canonical orthonormal basis of $\ell^2(\Gamma_*)$:
\beq\label{F-52}
\big[\mathcal{F}_{\T}\big(\widetilde{\Op}(F)_{\xi}\big)\mathcal{F}_{\T}^{-1}\big]_{\alpha^*,\beta^*}\,=\,{\widehat{F}_{\alpha^*-\beta^*}\big(\xi -(\alpha^*+\beta^*)/2\big)}.
\eeq	
\end{enumerate}
\end{theorem}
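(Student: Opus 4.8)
The plan is to assemble the three claims from the machinery already set up. Claim A is essentially a bookkeeping consequence of Remark \ref{R-BF-BFZ} together with equation \eqref{DF-OpABFZ}: the identity $\widetilde{\Op}(F)_\xi = e^{-i<\xi,\cdot>}\widehat{\Op}(F)_{\p_*(\xi)}e^{i<\xi,\cdot>}$ (in the sense of Remark \ref{R-BF-BFZ}, transplanted to $L^2(\T)$ via $\s$) depends on $\xi$ only through $\p_*(\xi)$ up to conjugation by the character $e^{i<\xi,\cdot>}$; replacing $\xi$ by $\xi+\gamma^*$ changes $e^{i<\xi,\cdot>}$ by the factor $e^{i<\gamma^*,\cdot>}$ which, on $\T$, is exactly $\hat\chi_{\gamma^*}^{-1}$ after composing with $\s$ (using the character identifications in \eqref{DF-char-zst}), so $\widetilde{\Op}(F)_{\xi+\gamma^*} = \hat\chi_{\gamma^*}\widetilde{\Op}(F)_\xi\hat\chi_{\gamma^*}^{-1}$. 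One should be slightly careful that $\s_*$ is only a local section, so $\p_*(\xi+\gamma^*)=\p_*(\xi)$ but $\s_*(\p_*(\xi+\gamma^*))$ may differ from $\s_*(\p_*(\xi))$; this jump is precisely absorbed by the conjugating character, and Remark \ref{R-1} already flags that the base point plays no role.

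For Claim B the strategy is to compute $\mathfrak{K}[F^{(N)}]_{\mathfrak{i}^*}(\hat x,\hat y)$ from the definition in Proposition \ref{P-II-40}, namely $\sum_{\beta\in\Gamma}\mathfrak{K}[F^{(N)}](\hat x,\hat y+\beta)$ with $\mathfrak{K}[F^{(N)}](x,y)=\int_{\X^*}d\eta\,e^{i<\eta,x-y>}\theta_N(\eta)F(\tfrac{x+y}{2},\eta)$, and recognize the $\beta$-sum as a Poisson-type summation forcing $\eta$ onto the dual lattice $\tfrac12\Gamma_*$ — more precisely, $\sum_{\beta\in\Gamma}e^{-i<\eta,\beta>}=\sum_{\gamma^*\in\Gamma_*}\delta(\eta-\gamma^*)$ after accounting for the unit cell volume being $1$, and the half-integer shift arises because the symbol is evaluated at $(x+y)/2$. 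Writing $\gamma^*=2\alpha^*+\kappa^*$ and $\beta=2\gamma+\kappa$ and splitting the $\beta$-sum accordingly produces exactly the double sum over $\Sigma_1^*$ and $\Sigma_1$ in \eqref{F-53-2-St}, with the phase $e^{(i/2)<\kappa^*,x-y>}$ coming from the $\kappa^*$-part of $\eta$ acting on $x-y$ and the factor $e^{(i/2)<\kappa^*,\kappa>}/2^d$ from the $\kappa$-part of the translation interacting with the half-lattice frequency. Then I would pass $N\to\infty$ using Corollary \ref{C-II-10} to identify the limit with $\widetilde{\Op}(F)_\xi$; the uniform polynomial boundedness of $\Phi_\bullet[F]$ in $\alpha^*$ follows from the Hörmander estimates on $F$ (the $\alpha^*$-dependence enters only through evaluating $F$ at momentum $\alpha^*+\tfrac12\kappa^*$, and $F\in S^p_1$ grows at most polynomially). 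The $\Gamma\times\Gamma$-periodicity in $(x,y)$ is immediate from $\Gamma$-periodicity of $F$ in its first slot plus the evenness/integrality structure: shifting $x$ or $y$ by $\Gamma$ shifts $(x+y+\kappa)/2$ by a half-period, but the accompanying phases $e^{(i/2)<\kappa^*,x-y>}$ and the re-indexing of $\kappa$ compensate — this is the point requiring the most care and it is where I'd spend real effort.

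Claim C is then a Fourier-series reformulation of Claim B. Apply the inverse discrete Fourier transform $\mathcal{F}_\T$ on each torus variable to the kernel $\mathfrak{K}_\T[F_\xi]$; since the kernel is obtained as $(\bb1\otimes\mathcal{F}_{\Gamma_*})\Phi_\bullet[F_\xi]$ composed with $\s\otimes\s$, taking $\mathcal{F}_\T$ in both variables turns convolution-type kernels into matrix elements, and the matrix entry $[\mathcal{F}_\T\widetilde{\Op}(F)_\xi\mathcal{F}_\T^{-1}]_{\alpha^*,\beta^*}$ becomes a double integral over $\T\times\T$ of the kernel against $e^{i<\alpha^*,\s(\bz)>}e^{-i<\beta^*,\s(\bw)>}$. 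Carrying out the $\bz$- and $\bw$-integrals against the explicit form in \eqref{F-53-2-St}, the $\Sigma_1^*$-sum collapses (the half-integer frequencies $\kappa^*$ pair with the $\tfrac12(x-y)$ phase to select a single term once we integrate over a full period), the $\Sigma_1$-sum reconstitutes the full integral over $\T$ rather than over the sublattice cells, and one is left with $\int_\T d\bz\,e^{i<\alpha^*-\beta^*,\s(\bz)>}F(\s(\bz),\xi-(\alpha^*+\beta^*)/2)$, which is exactly $\widehat{F}_{\alpha^*-\beta^*}(\xi-(\alpha^*+\beta^*)/2)$ by \eqref{F-61} — the shift $-(\alpha^*+\beta^*)/2$ in the momentum slot is the residue of the two half-lattice translations from the two torus variables. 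The main obstacle throughout is keeping the $2\Gamma$-versus-$\Gamma$ and $\tfrac12\Gamma_*$-versus-$\Gamma_*$ index bookkeeping consistent across the Poisson summation so that the phases and the $2^{-d}$ normalization land exactly as in \eqref{F-53-2-St} and \eqref{F-52}; once the combinatorial dictionary between $\beta=2\gamma+\kappa$ and $\eta\in\tfrac12\Gamma_*$ is fixed, the rest is routine manipulation of oscillatory integrals justified by Theorem A and Corollary \ref{C-II-10}.
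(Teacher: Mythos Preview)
Your proposal is correct and follows the paper's route closely: Part A from the BFZ conjugation identity \eqref{DF-OpABFZ}, Part B by splitting $\beta=2\gamma+\kappa$ in the kernel formula of Proposition \ref{P-II-40} and applying Poisson summation (one clarification: the half-lattice $\tfrac12\Gamma_*$ appears because Poisson is applied to the sum over $2\gamma$ after the $\Gamma$-periodicity of $F$ eliminates $\gamma$ from the symbol argument $(\hat x+\hat y+\beta)/2$, not because of the $(x+y)/2$ itself), and Part C by conjugating with $\mathcal{F}_\T$ and collapsing the resulting exponential sums. The paper organizes Part C slightly differently --- it starts from the unsplit $\gamma^*$-form of the kernel and inserts the Fourier series of $F^{(N)}$ in its first variable before reading off the constraints $\gamma^*=-(\alpha^*+\beta^*)$, $\mu^*=\alpha^*-\beta^*$ --- but your direct computation from the $\Phi_\bullet$-form lands in the same place.
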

\begin{proof}
Using Proposition \ref{P-60}, the existence of the family $\big\{\widetilde{\Op}(F)_{\xi}\big\}_{\xi\in\X^*}$ and the existence of the distribution kernels is a consequence of the Schwartz Kernel Theorem, the space $C^\infty(\T)$ being nuclear. The smoothness of this family with respect to $\xi\in\X^*$ clearly follows from \eqref{DF-OpA-L2T} in Proposition \ref{P-60}. We shall now concentrate on the properties of these distribution kernels.

Due to the Corollary \ref{C-II-10} we have the equality:
\begin{align}
&\widehat{\Op}(F_{\s_*(\bz^*)})_{\mathfrak{i}^*}\big(\U_{BF}\varphi\big)_{\mathfrak{i}^*}=\underset{N\nearrow\infty}{\lim}\Int_{\X}\,\mathfrak{K}[F^{(N)}_{\s_*(\bz^*)}]_{\mathfrak{i}^*}\big(\U_{BF}\varphi\big)_{\mathfrak{i}^*},\quad\forall{\varphi}\in\mathscr{S}(\X).
\end{align}
Thus, if we consider the identification $\mathscr{F}_{\mathfrak{i}^*}^\infty=\mathscr{E}_{\Gamma}(\X)\cong{C}^\infty(\T)$ given by $C^\infty(\T)\ni\mathring\phi\mapsto\mathring{\phi}\circ\p\in\mathscr{E}_{\Gamma}(\X)$, given two functions $\mathring{\phi}$ and $\mathring{\psi}$ in $C^\infty(\T)$ we have the equality:
\beq
\big(\mathring{\psi}\circ\p\,,\,\widehat{\Op}(F_{\hat{\xi}})_{\mathfrak{i}^*}\,(\mathring{\phi}\circ\p)\big)_{\mathscr{F}_{\mathfrak{i}^*}}=\underset{N\nearrow\infty}{\lim}\int_{\T}d\bz\,\overline{\mathring{\psi}(\bz)}\int_{\T}d\bz^\prime\mathfrak{K}_\T[F^{(N)}_{\hat{\xi}}]_{\mathfrak{i}^*}(\bz\,,\,\bz^\prime)\,\mathring{\phi}(\bz^\prime).
\eeq
This together with formula \eqref{DF-OpABFZ} ends the proof of point [A].

Let us now  prove point [B]. From \eqref{bpm1} and the decomposition \eqref{F-II-31} we have:
\begin{align}
	\mathfrak{K}[F^{(N)}](\bz,\bz^\prime)=\underset{\gamma\in\Gamma}{\sum}\,\underset{\kappa\in\Sigma_1}{\sum}\int_{\X^*}\hspace*{-0.3cm}d\zeta\,e^{i<\zeta,\s(\bz)-\s(\bz^\prime)-2\gamma-\kappa>}\,F^{(N)}\big([\s(\bz)+\s(\bz^\prime)+\kappa]/2,\zeta\big).
\end{align}
Recalling the Poisson summation formula 
\beq\label{F-Poisson}
\underset{k\in\Z}{\sum}e^{iakt}\,=\,\frac{2\pi}{a}\underset{p\in\Z}{\sum}\delta_0\big((2\pi/a)p-t\big)\,,\qquad\forall{a}>0\,,
\eeq
and taking $a=4\pi$ in the $d$-dimensional version of the above formula we get:
\beq\begin{split}\label{F-53}
&\mathfrak{K}[F^{(N)}_\xi](\bz,\bz^\prime)=\\
&\quad=\underset{\gamma^*\in\Gamma^*}{\sum}e^{i<(1/2)\gamma^*,\s(\bz)-\s(\bz^\prime)>}(2)^{-d}\underset{\kappa\in\Sigma_1}{\sum}\big[e^{(i/2)<\gamma^*,\kappa>}F^{(N)}_\xi\big((\s(\bz)+\s(\bz^\prime)-\kappa)/2,(1/2)\gamma^*\big)\big]\,,
\end{split}\eeq
and we can change the argument $(\s(\bz)+\s(\bz^\prime)-\kappa)/2$ into ${(\s(\bz)+\s(\bz^\prime)+\kappa)/2}$ due to the $\Gamma$-periodicity of $F^{(N)}$. 

We now use a decomposition as in \eqref{F-II-31} but for the dual indices $\gamma^*\in\Gamma^*$: 
$\gamma^*=2\alpha^*+\kappa^*$ with $\alpha^*\in\Gamma_*$ and $\kappa^*\in\Sigma_1^*$ and we obtain the following formula for the integral kernels:
\begin{align*}\numberthis\label{F-53-1}
&\mathfrak{K}[F^{(N)}_\xi](\bz,\bz^\prime)=\underset{\alpha^*\in\Gamma^*}{\sum}e^{i<\alpha^*,\s(\bz)-\s(\bz^\prime)>}\,\times\\
&\quad\times\,(2)^{-d}\underset{\kappa^*\in\Sigma_1^*}{\sum}e^{(i/2)<\kappa^*,\s(\bz)-\s(\bz^\prime)>}\underset{\kappa\in\Sigma_1}{\sum}\big[e^{(i/2)<\kappa^*,\kappa>}F^{(N)}_\xi\big((\s(\bz)+\s(\bz^\prime)-\kappa)/2,\alpha^*+(1/2)\kappa^*\big)\big]\,.
\end{align*}
This implies
\beq
\mathfrak{K}[F^{(N)}_\xi]=\underset{\alpha^*\in\Gamma^*}{\sum}\,(\hat{\chi}_{-\alpha^*}\otimes\hat{\chi}_{\alpha^*})[\Phi_{\alpha^*}[F^{(N)}_\xi]\circ(\s\otimes\s)]\,,\qquad\forall{N}\in\Nb\,,
\eeq
with
\begin{align}\label{F-53-2}
	&\Phi_{\alpha^*}[F^{(N)}_\xi](\hat{x},\hat{y})=\\ \nonumber
    &\quad=\underset{\kappa^*\in\Sigma_1^*}{\sum}e^{(i/2)<\kappa^*,\hat{x}-\hat{y}>}\underset{\kappa\in\Sigma_1}{\sum}\big[\frac{e^{(i/2)<\kappa^*,\kappa>}}{2^d}F^{(N)}_\xi\big((\hat{x}+\hat{y}+\kappa)/2,\alpha^*+(1/2)\kappa^*\big)\big].
\end{align}
One can extend the above definition to points $(\hat{x}+\alpha,\hat{y}+\beta)\in\X\times\X$ with $(\alpha,\beta)\in\Gamma\times\Gamma$, and notice the $\Gamma\times\Gamma$-periodicity by writing $\alpha+\beta=2\gamma+\sigma$ with $\gamma\in\Gamma$ and $\sigma\in\Sigma_1$ and noticing that $\alpha-\beta=2(\gamma-\beta)+\sigma$ and making the change of summation variable $\kappa\mapsto[\kappa+\sigma]\, \text{\tt mod}\,2\Gamma$ one gets:
\begin{align}
&\Phi_{\alpha^*}[F^{(N)}_\xi](\hat{x}+\alpha,\hat{y}+\beta)=\underset{\kappa^*\in\Sigma_1^*}{\sum}e^{(i/2)<\kappa^*,\hat{x}-\hat{y}+\sigma>}\,\times\\ \nonumber
&\quad\times\,\underset{\kappa\in\Sigma_1}{\sum}\big[\frac{e^{(i/2)<\kappa^*,\kappa-\sigma>}}{2^d}F^{(N)}_\xi\big((\hat{x}+\hat{y}+\kappa)/2,\alpha^*+(1/2)\kappa^*\big)\big]=\Phi_{\alpha^*}[F^{(N)}_\xi](\hat{x},\hat{y}).
\end{align}
This ends the proof of point [B].\\

In order to prove point [C], we apply the Fourier transform $\mathcal{F}_{\T}:L^2(\T)\overset{\sim}{\longrightarrow}\ell^2(\Gamma_*)$ and compute:
\begin{align*}
	&\big(\mathcal{F}_{\T}\big(\widetilde{\Op}(F^{(N)})_{\xi}\big)\mathcal{F}_{\T}^{-1}\big)_{\alpha^*,\beta^*}=\int_{\T}d\bz\int_{\T}d\bz^\prime\,e^{i<\alpha^*,\s(\bz)>}\,e^{-i<\beta^*,\s(\bz^\prime)>}\underset{\gamma^*\in\Gamma^*}{\sum}e^{i<(1/2\gamma^*,\s(\bz)-\s(\bz^\prime)>}\,\times\\\numberthis\label{F-51}
&\hspace*{3cm}\times\,(2)^{-d}\underset{\kappa\in\Sigma_1}{\sum}\big[e^{(i/2)<\gamma^*,\kappa>}F^{(N)}\big((\s(\bz)+\s(\bz^\prime)+\kappa)/2,(1/2)\gamma^*+\xi\big)\big]\,.
\end{align*}
Taking into account the $\Gamma$-periodicity of the symbol $F^{(N)}\in{S}^p_1(\Xi)_{\Gamma}$ we may consider its Fourier decomposition:
\beq\begin{split}
F^{(N)}_{\xi}(u,\zeta)=&\underset{\mu^*\in\Gamma_*}{\sum}\,\widehat{F}^{(N)}_{\mu^*}(\zeta+\xi)\,\overline{\hat{\chi}_{\mu^*}\big((\s\circ\p)(u)\big)},\\
&\widehat{F}^{(N)}_{\mu^*}(\zeta):=\int_{\T}d\bz\,e^{i<\mu^*,\s(\bz)>}\,F^{(N)}(\s(\bz),\zeta).
\end{split}\eeq
Inserting this decomposition into \eqref{F-51} we conclude that:
\begin{align*}\numberthis\label{F-54}
	&\big(\mathcal{F}_{\T}\widehat{\Op}(F^{(N)}_\xi)_{\mathfrak{i}^*}\mathcal{F}_{\T}^{-1}\big)_{\alpha^*,\beta^*}=\int_{\T}d\bz\int_{\T}d\bz^\prime\,e^{i<\alpha^*,\s(\bz)>}\,e^{-i<\beta^*,\s(\bz^\prime)>}\underset{\gamma^*\in\Gamma^*}{\sum}e^{i<(1/2)\gamma^*,\s(\bz)-\s(\bz^\prime)>}\,\times\\
	&\hspace*{3cm}\times\,(2)^{-d}\underset{\kappa\in\Sigma_1}{\sum}\big[e^{(i/2)<\gamma^*,\kappa>}\big[\underset{\mu^*\in\Gamma^*}{\sum}e^{-i<\mu^*,(\s(\bz)+\s(\bz^\prime)+\kappa)/2>}\widehat{F}^{(N)}_{\mu^*}((1/2)\gamma^*+\xi)\big]\Big]\,.
\end{align*}
The oscillating exponential factors appearing in \eqref{F-54} allow to bring a number of simplifications to it. First we notice that we may gather the factors depending on $\kappa\in\Sigma_1$ and compute:
\beq
\underset{\kappa\in\Sigma_1}{\sum}e^{(i/2)<\gamma^*-\mu^*,\kappa>}=\left\{\begin{array}{ll}
0\,, & \gamma^*-\mu^*\in\Gamma_*\setminus[2\Gamma_*]\,,\\
2^d\,,& \gamma^*-\mu^*\in[2\Gamma_*]\,.
\end{array}\right.
\eeq
Finally, we may use the inverse Fourier formula for both integrals over $\bz$ and $\bz^\prime$. The integral over $\bz$ implies that: $\alpha^*+(1/2)(\gamma^*-\mu^*)=0$ while the integral over $\bz^\prime$ implies that $\beta^*+(1/2)(\gamma^*+\mu^*)=0$, i.e.:
$
\gamma^*=-(\alpha^*+\beta^*)
$
and $\mu^*=\alpha^*-\beta^*$, concluding that:
\begin{align*}\numberthis\label{F-54-1}
	&\big(\mathcal{F}_{\T}\widehat{\Op}(F^{(N)}_\xi)_{\mathfrak{i}^*}\mathcal{F}_{\T}^{-1}\big)_{\alpha^*,\beta^*}=\widehat{F}^{(N)}_{\alpha^*-\beta^*}\big(-(1/2)(\alpha^*+\beta^*)+\xi\big).
\end{align*}
The last step consists in letting $N\in\Nb$ go to infinity and notice that for each fixed $\mu^*$ we have the estimation:
\begin{align}
&\int_{\T}d\bz\,e^{i<\mu^*,\s(\bz)>}\,\big[F^{(N)}\big(\s(\bz),\zeta\big)-F\big(\s(\bz),\zeta\big)\big]=\\ \nonumber
&\hspace*{5cm}=-\int_{\T}d\bz\,e^{i<\mu^*,\s(\bz)>}\,\theta_N(\zeta)F\big(\s(\bz),\zeta\big)\,\underset{N\nearrow\infty}{\longrightarrow}\,0,\quad\forall\zeta\in\X^*
\end{align} 
and thus, the Fourier coefficient $\widehat{F}^{(N)}_{\mu^*}(\eta)\in\mathbb{C}$ converges at the value $\widehat{F}_{\mu^*}(\eta)\in\mathbb{C}$ for any $\eta\in\X^*$.
\end{proof}

\subsubsection{Decomposition of the self-adjointness domain}\label{SSS-sa-dom}
Theorem \ref{L-Abounded} implies that for a magnetic field satisfying \eqref{H-Bper} we have the equality $\mathscr{H}^p_A(\X)=\mathscr{H}^p(\X)$. 
Then let us analyze what becomes the space $\mathscr{H}^p(\X)$ for some $p>0$ after a  Bloch type transformation. Let us recall that $\mathscr{H}^p(\X)$ is the subspace of functions $f\in L^2(\X)$ satisfying the condition $\Op(\mathfrak{m}_p)f\in L^2(\X)$ for $\mathfrak{m}_p(x,\xi):=<\xi>^p$. The decomposition \eqref{DF-OpABF},  together with the theory of direct integrals of Hilbert spaces in \cite{Dix} (see Sections II.1.3, II.1.7, II.2.3),  imply that for any $p>0$:
\beq
\U_{BF}\mathscr{H}^p(\X)\,=\,\int_{\T_*}^{\oplus}\hspace*{-2pt} d\bz^*\,\mathscr{F}^{p}_{\bz^*}\,=:\mathscr{F}^{p},
\eeq
where:
\beq
\mathscr{F}^{p}_{\bz^*}:=\big\{f\in\mathscr{F}_{\bz^*},\ \Op(\mathfrak{m}_p)f\in{L}^2_{\text{\tt loc}}(\X)\big\}.
\eeq
 Finally recalling \eqref{eq:2.17}, we notice that for any $\xi\in\X^*$ we have that $$ \mathfrak{f}(\xi,\cdot)\mathscr{F}^p_{\p_*(\xi)}\cong\mathscr{F}^p_{\mathfrak{i}^*}\cong\mathscr{H}^p(\T)$$ the Sobolev space of order $p$ on the $d$-dimensional torus. 

Using Remark \ref{R-OAF} we deduce that $\U_{BF}\overline{\Op(F)}\U_{BF}^{-1}$ and $\U_{BFZ}\overline{\Op(F)}\U_{BFZ}^{-1}$ are decomposable  (see Section II.2.3 in \cite{Dix}), lower-bounded, self-adjoint operators and thus each of their fibre operators $\widehat{\Op}(F)_{\bz^*}$ is lower-bounded and self-adjoint in $\mathscr{F}_{\bz^*}$ with domain $ \mathscr{F}^p_{\bz^*}\cong\mathfrak{f}(\s_*(\bz^*),\cdot)\mathscr{H}^p(\T)$ and resp. $\widetilde{\Op}(F)_{\xi}$ are lower bounded and self-adjoint in $L^2(\T)$ with common domain $\mathscr{H}^p(\T)$. 

\subsubsection{Regularity and spectral properties.}\label{SSS-reg}
The above arguments imply that for any $\zz\notin\sigma\big(\mO(F)\big)$ the associated resolvent operators are also decomposable. Thus we have the equality:
\beq
\big(\mO(F)-\zz\bb1\big)^{-1}=\U_{BFZ}^{-1}\big(\widetilde{\mO}(F)_{\bullet}-\zz\bb1\big)^{-1}\U_{BFZ}
\eeq
where $\big(\widetilde{\mO}(F)_{\bullet}-\zz\bb1\big)^{-1}$ is the operator of fibrewise multiplication in $\mathscr{G}$ with the operator-valued function 
\beq\label{DF-Rez-xi}
\X^*\ni\xi\mapsto\big(\widetilde{\mO}(F)_{\xi}-\zz\bb1\big)^{-1}=\Big(\widehat{\Op}\big((\bb1_{\X}\otimes\tau_{\xi})F\big)_{\mathfrak{i}^*}-\zz\bb1\Big)^{-1}\in\mathcal{L}\big(L^2(\T);\mathscr{H}^p(\T)\big).
\eeq

\begin{proposition}\label{P-PNf-2}
	The map introduced  in \eqref{DF-Rez-xi} is infinitely differentiable for the operator norm topology on $\mathcal{L}\big(L^2(\T);\mathscr{H}^p(\T)\big)$.
\end{proposition}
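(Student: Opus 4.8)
The assertion is local in $\xi$, so I would fix $\xi_0\in\X^*$ and prove infinite differentiability on the closed ball $B:=\{\xi\in\X^*:\ |\xi-\xi_0|\le1\}$. Recall from Notation \ref{N-60} and Proposition \ref{P-60} that $\widetilde{\Op}(F)_\xi=\widehat{\Op}(F_\xi)_{\mathfrak{i}^*}$ with $F_\xi=(\bb1_\X\otimes\tau_\xi)F$, that by \S\ref{SSS-sa-dom} each $\widetilde{\mO}(F)_\xi$ is self-adjoint and lower bounded in $L^2(\T)$ with domain $\mathscr{H}^p(\T)$, and that \S\ref{SSS-reg} already provides $R(\xi):=\big(\widetilde{\mO}(F)_\xi-\zz\bb1\big)^{-1}\in\mathcal{L}\big(L^2(\T);\mathscr{H}^p(\T)\big)$ for every $\xi$, with the uniform bounds $\sup_\xi\|R(\xi)\|_{\mathcal{L}(L^2(\T))}=\dist\big(\zz,\sigma(\mO(F))\big)^{-1}$ and $\sup_\xi\|R(\xi)\|_{\mathcal{L}(L^2(\T);\mathscr{H}^p(\T))}<\infty$ (the second because $\U_{BFZ}(\mO(F)-\zz\bb1)^{-1}\U_{BFZ}^{-1}$ is the fibrewise multiplication by $R(\cdot)$ and is bounded from $L^2(\X)$ into $\mathscr{H}^p(\X)\cong\int^{\oplus}\mathscr{H}^p(\T)\,d\bz^*$). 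The plan has two steps: (i) prove that $\xi\mapsto\widetilde{\Op}(F)_\xi$ is $C^\infty$ on $B$ as a map into $\mathcal{L}\big(\mathscr{H}^p(\T);L^2(\T)\big)$, with all derivatives uniformly bounded on $B$; (ii) bootstrap this to $R(\cdot)$ through the second resolvent identity.

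For step (i), the essential observation is that $\xi\mapsto F_\xi$ is a $C^\infty$ map from $\X^*$ into the Fr\'echet space $S^p_1(\Xi)_{\Gamma}$: its derivatives are $\partial_\xi^\alpha F_\xi=(-1)^{|\alpha|}(\partial_\eta^\alpha F)_\xi\in S^{p-|\alpha|}_1(\Xi)_{\Gamma}\subset S^p_1(\Xi)_{\Gamma}$, and differentiability in each $S^p_1$-seminorm follows from Taylor's formula with integral remainder once one notes that the $S^q_1$-seminorms of a translate $G_\zeta:=(\bb1_\X\otimes\tau_\zeta)G$ are bounded by a constant (depending only on $|\zeta|\le1$) times those of $G$, the weights $<\eta>$ and $<\eta-\zeta>$ being comparable. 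Combining this with the standard continuity of the Weyl quantization, $\Op:S^q_1(\Xi)\to\mathcal{L}\big(\mathscr{H}^q(\X);L^2(\X)\big)$, together with the explicit structure of the fibre operators and their uniform mapping bounds (Propositions \ref{P-60} and \ref{P-II-10}, and the compatibility of the Sobolev scale with $\U_{BFZ}$ recalled in \S\ref{SSS-sa-dom}), one obtains that $\xi\mapsto\widetilde{\Op}(F)_\xi$ is $C^\infty$ into $\mathcal{L}\big(\mathscr{H}^p(\T);L^2(\T)\big)$, with $\partial_\xi^\alpha\widetilde{\Op}(F)_\xi=(-1)^{|\alpha|}\widetilde{\Op}\big((\partial_\eta^\alpha F)_\xi\big)$ and $\sup_{\xi\in B}\big\|\partial_\xi^\alpha\widetilde{\Op}(F)_\xi\big\|_{\mathcal{L}(\mathscr{H}^p(\T);L^2(\T))}<\infty$ for every $\alpha$.

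For step (ii), the second resolvent identity gives, for $\xi,\xi'\in B$,
\[
R(\xi')-R(\xi)=-R(\xi')\,\big(\widetilde{\Op}(F)_{\xi'}-\widetilde{\Op}(F)_\xi\big)\,R(\xi),
\]
which I read as a composition $L^2(\T)\xrightarrow{R(\xi)}\mathscr{H}^p(\T)\xrightarrow{\widetilde{\Op}(F)_{\xi'}-\widetilde{\Op}(F)_\xi}L^2(\T)\xrightarrow{R(\xi')}\mathscr{H}^p(\T)$ in which, by (i) and the uniform resolvent bounds, the middle factor has $\mathcal{L}(\mathscr{H}^p(\T);L^2(\T))$-norm $\le C|\xi'-\xi|$ and the outer ones are uniformly bounded; hence $\xi\mapsto R(\xi)$ is Lipschitz, in particular continuous, into $\mathcal{L}\big(L^2(\T);\mathscr{H}^p(\T)\big)$. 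Differentiating the identity in $\xi_j$ — the difference quotients of $\widetilde{\Op}(F)_\cdot$ converging in $\mathcal{L}(\mathscr{H}^p(\T);L^2(\T))$ by (i), those of $R(\cdot)$ being norm-bounded while $R(\cdot)$ is continuous — and using joint continuity of operator composition on bounded sets, one gets $\partial_{\xi_j}R(\xi)=-R(\xi)\big(\partial_{\xi_j}\widetilde{\Op}(F)_\xi\big)R(\xi)\in\mathcal{L}\big(L^2(\T);\mathscr{H}^p(\T)\big)$. Iterating via Leibniz's rule, $\partial_\xi^\alpha R(\xi)$ is a finite sum of alternating compositions of factors $R(\xi)$ (each mapping $L^2(\T)\to\mathscr{H}^p(\T)$) and factors $\widetilde{\Op}\big((\partial_\eta^\beta F)_\xi\big)$ (each mapping $\mathscr{H}^p(\T)\to L^2(\T)$); every such composition is well-defined (the target of each factor being the source of the next), lands in $\mathcal{L}\big(L^2(\T);\mathscr{H}^p(\T)\big)$, has all factors continuous in $\xi$ and uniformly bounded on $B$, hence is itself continuous and differentiable on $B$. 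Thus $R\in C^\infty\big(B;\mathcal{L}(L^2(\T);\mathscr{H}^p(\T))\big)$, and as $\xi_0\in\X^*$ was arbitrary the proposition follows.

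Beyond the routine symbol Taylor expansion and the algebra of the resolvent identity, the one step I expect to require genuine care is (i): pinning down the continuity of the Weyl quantization $S^q_1(\Xi)\to\mathcal{L}\big(\mathscr{H}^q(\X);L^2(\X)\big)$ and its compatibility with the Bloch--Floquet--Zak decomposition so as to secure the uniform fibre bound $\sup_{\xi\in B}\|\widetilde{\Op}(G)_\xi\|_{\mathcal{L}(\mathscr{H}^p(\T);L^2(\T))}\le C$ for $G$ ranging over a bounded subset of $S^p_1(\Xi)_\Gamma$. Once that is in place, step (ii) is a standard resolvent bootstrap in the spirit of Kato's perturbation theory.
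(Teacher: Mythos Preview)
Your argument is correct and follows the same overall strategy as the paper---both rest on the second resolvent identity together with the differentiability of the family $\xi\mapsto\widetilde{\Op}(F)_\xi$. The main difference is in how the composite operators are controlled: the paper works at the symbol level, writing the resolvents as $\widehat{\Op}\big(\mathcal{r}_{\zz}(F_\delta)\big)_{\mathfrak{i}^*}$ and using the Moyal product to observe directly that $\mathcal{r}_{\zz}(F_\delta)\sharp(\partial_\xi F)\sharp\mathcal{r}_{\zz}(F_0)\in S^{-p-1}_1(\Xi)_\Gamma$, whence the $\mathcal{L}\big(L^2(\T);\mathscr{H}^p(\T)\big)$-bound follows from the symbolic calculus in one stroke. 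Your route instead separates the problem into continuity of the quantization $S^q_1(\Xi)\to\mathcal{L}\big(\mathscr{H}^q(\X);L^2(\X)\big)$ and compatibility with the BFZ fibration, then composes operators rather than symbols. What the paper's approach buys is that the mapping properties you flag as ``requiring genuine care'' in step (i) come for free from the symbol class of the composite, so no separate uniform fibre bound needs to be established; what your approach buys is a cleaner Leibniz-rule iteration and a more transparent operator-theoretic picture that does not rely on the Moyal calculus.
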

\begin{proof}
Due to Remark \ref{R-1} we only have to verify the smoothness around $\xi=0$; thus we have to control the behaviour of the following quotient for $\delta\rightarrow0$:
\beq
X_\delta\,:=\,\delta^{-1}\Big[\Big(\widehat{\Op}\big((\bb1_{\X}\otimes\tau_{\delta})F\big)_{\mathfrak{i}^*}-\zz\bb1\Big)^{-1}-\Big(\widehat{\Op}\big(F\big)_{\mathfrak{i}^*}-\zz\bb1\Big)^{-1}\Big]\,.
\eeq
Let us compute:
\begin{align*}
	X_\delta&=\Big(\widehat{\Op}\big((\bb1_{\X}\otimes\tau_{\delta})F\big)_{\mathfrak{i}^*}-\zz\bb1\Big)^{-1}\Big[\delta^{-1}\widehat{\Op}\big((\bb1_{\X}\otimes\tau_{\delta})F-F\big)_{\mathfrak{i}^*}\Big]\Big(\widehat{\Op}\big(F\big)_{\mathfrak{i}^*}-\zz\bb1\Big)^{-1}\\
	&\ \underset{\delta\rightarrow0}{\longrightarrow}\,\Big(\widehat{\Op}\big((\bb1_{\X}\otimes\tau_{\delta})F\big)_{\mathfrak{i}^*}-\zz\bb1\Big)^{-1}\Big[\widehat{\Op}\big(\partial_\xi\big|_{\xi=0}{F}\big)_{\mathfrak{i}^*}\Big]\Big(\widehat{\Op}\big(F\big)_{\mathfrak{i}^*}-\zz\bb1\Big)^{-1}.
\end{align*}
Let us denote by $ F_{\delta}:=(\bb1_{\X}\otimes\tau_{\delta})F$ and by $\mathcal{r}_{\zz}(F_\delta)\in{S}^{-p}_1(\Xi)_{\Gamma}$ the symbol of $\Big(\widehat{\Op}\big(F_{\delta}\big)-\zz\bb1\Big)^{-1}$ and notice that: 
\beq
\mathcal{r}_{\zz}(F_\delta)\sharp\partial_\xi\big|_{\xi=0}{F}\sharp\mathcal{r}_{\zz}(F_0)\in{S}^{-p-1}_1(\Xi)_\Gamma\,.
\eeq
Using the magnetic pseudodifferential calculus and the BFZ transformation we get the identities:
\begin{align*}
	&\Big(\widehat{\Op}\big(F_{\delta}\big)_{\mathfrak{i}^*}-\zz\bb1\Big)^{-1}\Big[\widehat{\Op}\big(\partial_\xi\big|_{\xi=0}{F}\big)_{\mathfrak{i}^*}\Big]\Big(\widehat{\Op}\big(F\big)_{\mathfrak{i}^*}-\zz\bb1\Big)^{-1}\\
	&\ =\widehat{\Op}\big(\mathcal{r}_{\zz}(F_\delta)\big)_{\mathfrak{i}^*}\widehat{\Op}\big(\partial_\xi\big|_{\xi=0}{F}\big)_{\mathfrak{i}^*}\widehat{\Op}\big(\mathcal{r}_{\zz}(F_0)\big)_{\mathfrak{i}^*}\\
	&\ =\widehat{\Op}\big(\mathcal{r}_{\zz}(F_\delta)\sharp\big(\partial_\xi\big|_{\xi=0}{F}\big)\sharp\mathcal{r}_{\zz}(F_0)\big)_{\mathfrak{i}^*}\ \in\ \mathcal{L}\big(L^2(\T);\mathscr{H}^p(\T)\big)\subset\mathbb{B}\big(L^2(\T)\big).
\end{align*}
Moreover, we also notice that:
\begin{align*}
	\Big(\big[\delta^{-1}\big(F_\delta-F\big)-(\partial_\xi\big|_{\xi=0}F)\big]\sharp\mathcal{r}_{\zz}(F_0)\,\in\,S^{-p-1}_1(\Xi)_\Gamma\subset{BC}^\infty(\Xi)
\end{align*}
and thus, for any continuous norm $\lnu:S^0_1(\Xi)\rightarrow\R_+$ we have the limits:
\begin{align*}
	\underset{\delta\rightarrow0}{\lim}\ \lnu\,\Big(\mathfrak{m}_p\sharp\mathcal{r}_{\zz}(F_\delta)\sharp\big[\delta^{-1}\big(F_\delta-F\big)-(\partial_\xi\big|_{\xi=0}F)\big]\sharp\mathcal{r}_{\zz}(F_0)\Big)\,=\,0.
\end{align*}
Iterating this argument and taking into account the smoothness of $F$ one obtains the conclusion.
\end{proof}

The above arguments, the Sobolev embedding Theorem, the properties of the Sobolev spaces on compact manifolds and the usual 'bootstrap' procedure used for eigenvectors of elliptic operators, taking into account that $\widehat{\Op}(F)_{\mathfrak{i}^*}\in\mathcal{L}\big(L^2(\T);\mathscr{H}^p(\T)\big)$ imply the following proposition.
\begin{proposition}\label{P-PNf-3}
For any $\xi\in\X^*$, the operator $\widetilde{\Op}(F)_{\xi}$  is a lower-bounded self-adjoint operator with compact resolvent and thus has a real lower-bounded discrete spectrum diverging to $+\infty$ with eigenfunctions of class $C^\infty(\T)$.
\end{proposition}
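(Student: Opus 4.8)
The plan is to reduce the proposition to two facts: first, that the fibre is a lower semi-bounded self-adjoint operator whose domain $\mathscr{H}^p(\T)$ embeds compactly into $L^2(\T)$; and second, that its resolvent improves Sobolev regularity by $p$ units on the whole torus Sobolev scale. The self-adjointness, the lower-boundedness and the identification of the domain with $\mathscr{H}^p(\T)$ for $p>0$ have already been obtained in Section \ref{SSS-sa-dom}, so what remains is the compactness of the resolvent, the discreteness of the spectrum, and the smoothness of the eigenfunctions.

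First I would prove that the resolvent of $\widetilde{\Op}(F)_\xi$ is compact. Since this operator is lower semi-bounded, its resolvent set is non-empty; fixing some $\zz$ in it, formula \eqref{DF-Rez-xi} (whose proof localizes, via Remark \ref{R-1}, to a neighbourhood of $\xi=0$, as in the proof of Proposition \ref{P-PNf-2}) shows that $\big(\widetilde{\Op}(F)_\xi-\zz\bb1\big)^{-1}$ is bounded from $L^2(\T)$ into $\mathscr{H}^p(\T)$. As $\T\cong\Sb^d$ is a compact manifold and $p>0$, the Rellich--Kondrachov embedding $\mathscr{H}^p(\T)\hookrightarrow L^2(\T)$ is compact; composing it with the bounded resolvent exhibits $\big(\widetilde{\Op}(F)_\xi-\zz\bb1\big)^{-1}$ as a compact operator on $L^2(\T)$. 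The classical spectral theorem for a lower-bounded self-adjoint operator with compact resolvent on the infinite-dimensional Hilbert space $L^2(\T)$ then gives a purely discrete spectrum: a sequence of real eigenvalues of finite multiplicity, bounded below and diverging to $+\infty$.

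Next I would run the usual elliptic bootstrap for the eigenfunctions. If $\widetilde{\Op}(F)_\xi u=\lambda u$ with $u\in\mathscr{H}^p(\T)$, then $u=(\lambda-\zz)\big(\widetilde{\Op}(F)_\xi-\zz\bb1\big)^{-1}u$, and the point is that this resolvent sends $\mathscr{H}^s(\T)$ into $\mathscr{H}^{s+p}(\T)$ for every $s\in\R$. This holds because, as in the proof of Proposition \ref{P-PNf-2}, the fibre resolvent equals $\widehat{\Op}\big(\mathcal{r}_{\zz}(F)\big)_{\mathfrak{i}^*}$ with $\mathcal{r}_{\zz}(F)\in S^{-p}_1(\Xi)_\Gamma$, and by Theorem \ref{T-main} this is a toroidal pseudo-differential operator of order $-p$ on $\T$, hence continuous on the whole Sobolev scale with the expected shift. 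Iterating from $u\in\mathscr{H}^p(\T)$ gives $u\in\mathscr{H}^{2p}(\T)$, then $u\in\mathscr{H}^{3p}(\T)$, and so on, so that $u\in\bigcap_{k\in\Nb}\mathscr{H}^{kp}(\T)=C^\infty(\T)$ by the Sobolev embedding theorem on the compact manifold $\T$.

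The hard part is not internal to this proposition but lies in the ingredient it borrows: the mapping property of $\widehat{\Op}(F)_{\mathfrak{i}^*}$ and of its resolvent on the full torus Sobolev scale, i.e. that these are genuine toroidal pseudo-differential operators of order $p$, resp. $-p$, with symbol bounds inherited from $S^p_1(\Xi)_\Gamma$. This elliptic-regularity statement follows from Theorem \ref{T-main} together with the continuity of the magnetic Weyl calculus (and the fact, used already in Proposition \ref{P-PNf-2}, that $\mathcal{r}_{\zz}(F)\in S^{-p}_1(\Xi)_\Gamma$); once it is granted, the compact Sobolev embedding and the spectral theorem make the remainder routine.
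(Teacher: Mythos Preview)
Your proposal is correct and follows essentially the same approach as the paper, which does not give a detailed proof but only states in one sentence that the result follows from ``the Sobolev embedding Theorem, the properties of the Sobolev spaces on compact manifolds and the usual `bootstrap' procedure used for eigenvectors of elliptic operators'' together with the mapping property of the resolvent into $\mathscr{H}^p(\T)$. Your argument is a faithful and more explicit unpacking of exactly these ingredients: the Rellich--Kondrachov compact embedding on the compact manifold $\T$, the spectral theorem for self-adjoint operators with compact resolvent, and the Sobolev bootstrap for smoothness of eigenfunctions.
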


 \begin{remark}\label{R-PNf-1}
When $F\in{S}^p_1(\Xi)_{\Gamma}$ with $p>0$ is elliptic, the conclusion of Proposition \ref{P-Fzst-inv} may be completed with the consequences of the paragraphs \ref{SSS-sa-dom} and \ref{SSS-reg} and we conclude that the canonical extension of $\Op(F)$ to $\mathscr{S}^\prime(\X)$ takes each $\mathscr{F}^p_{\bz^*}$ into $\mathscr{F}_{\bz^*}$, with $\widehat{\Op}(F)_{\bz^*}\pm{i\bb1}$ being surjective maps $\mathscr{F}^p_{\bz^*}\rightarrow\mathscr{F}_{\bz^*}$.
\end{remark}

\section{A class of pseudo-differential operators on the torus}\label{S-3}

When we have obtained Formula \eqref{DF-OpABF} we have explained that in the BF representation the fibre operator at $\bz^*\in\T_*$ appearing  in the direct integral decomposition of $\U_{BF}\overline{\Op(F)}\U_{BF}^{-1}$ is the restriction to $\mathscr{F}_{\bz^*}$ of the extension to $\mathscr{S}^\prime(\X)$ of the magnetic Weyl operator $\Op(F)$ 
(defined initially on $\mathscr{S}(\X)\subset{L}^2(\X)$). We would like to consider the fibre operator $\widetilde{\Op}(F)_\xi$ as a pseudo-differential operator on the $d$-dimensional torus $\T$.

\subsection{Brief reminder of the  pseudo-differential calculus on the torus}

Considering the smooth real manifold structure on $\T\cong\Sd$ one may use the general theory of H\"{o}rmander for pseudo-differential operators on real smooth manifolds (see \cite{Ho-3}), in order to obtain a 'local calculus' that has a nice covariant behaviour under  changes of coordinates and allows for the identification of a \textit{principal symbol}. 

On the other hand, the abelian Lie group structure of $\T$ allows for a 'global theory' of pseudo-differential operators based on the general Fourier transform theory on abelian locally compact groups (see \cite{Fo-AHA, McL-91, RT-06, RT-10} and the references therein). In fact, Theorem 4.4 in \cite{McL-91} proves the equivalence of the two procedures and gives a precise correspondence between the two types of symbols: 'local symbols' defined on charts for the H\"{o}rmander theory and 'global symbols' defined on the product $\T\times\widehat{\T}$ with $\widehat{\T}=\Zd$ the Pontryagin dual of $\T$.
In our paper we shall use this 'global theory' for pseudo-differential operators on the $d$-dimensional torus, choosing the approach based on Weyl systems, that will be  presented in the following subsections, and proposing a 'symmetric' representation for the Weyl system that has a nice behaviour relative to adjunction. In order to clarify the connection with the usual pseudo-differential calculus developed in the literature, using the Kohn-Nirenberg procedure, let us very briefly recall some main facts. 

Due to the discretness of the dual group $\T_*\cong\Zd$ one has to replace the H\"{o}rmander type symbols that are smooth functions on the phase space ($\Rd\times\widehat{\Rd}$) by some similar classes of vector-valued sequences indexed by $\Zd$ and taking values in $C^\infty(\T)$. 
We need to recall the definition of some operators used in studying sequences and replacing the usual partial derivations, namely the \textit{step 1 differences} (see also \cite{RT-06, McL-91} where a different notation is used).  
Given any $\Zd$-indexed sequence $\{\vec{f}_{\gamma}\}_{\gamma\in\Zd}$ of elements  in a Fr\'{e}chet space we define:
\beq\label{F-B-1}
\forall{j}\in\underline{d}:\quad(\dd_j\vec{f})_\gamma:=\vec{f}_{\gamma+\varepsilon_j}-\vec{f}_{\gamma}.
\eeq 
On $C^\infty(\T)$ we consider the Fr\'{e}chet topology  defined by the $\sup$ norms of the derivatives that we denote by:
\beq
\lnu_n(f):=\underset{\bz\in\T}{\sup}\,\underset{|\mu|\leq{n}}{\max}\big|\big(\mathring{\partial}^\mu{f}\big)(\bz)\big|
\eeq
with $\mathring{\partial}$ the invariant vector fields defined by the Lie structure (as recalled in \eqref{DF-T-deriv}).
 Then, for $m\in\R$ and $(\rho,\delta)\in[0,1]\times[0,1]$ one defines \textit{the symbols of order $m$ and type $(\rho,\delta)$ on the $d$-dimensional torus}, as sequences of smooth functions $\Gamma_*\ni\alpha^*\mapsto{a}(\cdot,\alpha)\in{C}^\infty(\T)$ such that for any $(\mu,\nu)\in\mathbb{N}^d\times\mathbb{N}^d$, there exists ${c}(\mu,\nu)>0$ satisfying:
 \beq\label{F-3-3}
\big|\big(\mathring{\partial}^\nu\dd^\mu{a}\big)(\bz,\alpha^*)\big|\,\leq\,c(\mu,\nu)\big(1+|\alpha^*|\big)^{m-\rho|\mu|+\delta|\nu|},\quad\forall(\bz,\alpha^*)\in\T\times\Gamma_*.
 \eeq
 Then one denotes by $S^m_{\rho,\delta}(\T\times\Gamma_*)$ the complex space of these symbols. Finally, one defines the pseudo-differential operator associated with  a symbol $a\in{S}^m_{\rho,\delta}(\T\times\Gamma_*)$ as the following operator acting on ${C}^\infty(\T)$:
 \beq\label{F-3-4}
 \big(a(X,D)\varphi\big)(\bz)\,:=\,\underset{\alpha^*\in\Gamma_*}{\sum}\int_{\T}d\bz^\prime\,e^{i<\alpha^*,\s(\bz)-\s(\bz^\prime)>}\,a(\bz,\alpha^*)\,\varphi(\bz^\prime).
 \eeq

One may also consider the family of \textit{toroidal amplitudes of order $m\in\R$ and type $(\rho,\delta)\in[0,1]\times[0,1]$}, that are sequences of smooth functions of the form $\Gamma_*\ni\alpha\mapsto{a}(\cdot,\cdot,\alpha^*)\in{C}^\infty(\T\times\T)$ verifying that
 for all $(\mu,\nu,\nu')\in\mathbb{N}^d\times\mathbb{N}^d\,,$ there exists $c(\mu,\nu,\nu')>0$ such that
\beq \label{DF-PNf-1}
 \, \big|\big(\mathring{\partial}^\nu\mathring{\partial}^{\nu'}\dd^\mu{a}\big)(\bz,\bz',\alpha^*)\big|\,\leq\,c(\mu,\nu,\nu')\big(1+|\alpha^*|\big)^{m-\rho|\mu|+\delta|\nu+\nu'|},\,
\forall(\bz,\alpha^*)\in\T\times\Gamma_*\,.
\eeq
For a toroidal amplitude $\widetilde{a}$, we associate the in $\mathcal{L}\big(\mathscr{S}(\X)\big)$ defined by:
\beq\label{F-3-6}
\big(\widetilde{\Op}(a)\varphi\big)(\bz)\,:=\,\underset{\alpha^*\in\Gamma_*}{\sum}\int_{\T}d\bz^\prime\,e^{i<\alpha^*,\s(\bz)-\s(\bz^\prime)>}\,a(\bz,\bz',\alpha^*)\,\varphi(\bz^\prime),\quad\forall\varphi\in\mathscr{S}(\X).
\eeq
Theorem 5.2 in \cite{RT-10} proves that given any toroidal amplitude $a$ of order $m\in\R$ and type $(\rho,\delta)\in[0,1]\times[0,1]$, there exists a unique symbol $\sigma[a]\in{S}^m_{\rho,\delta}(\T\times\Gamma_*)$ such that $\widetilde{Op}(a)=\sigma[a](X,D)$.

\subsection{The Weyl system associated with a locally compact abelian group}

Let us very briefly recall that given an abelian locally compact group $\mathcal{G}$ (with composition denoted by $\circ$) with a Haar measure $dx$, we can associate with it its Pontryagin dual $\widehat{\mathcal{G}}$, i.e. the set of its irreducible unitary representations, that has a canonical structure of abelian locally compact group (\cite{Fo-AHA}) with Haar mesure $d\xi$ chosen with some given normalization.  
\begin{definition} We call \textit{Weyl system associated with the dual pair} $\big(\mathcal{G},\hat{\mathcal{G}}\big)$, the pair of unitary representations:
\begin{align}\label{eq:3.1}
	\big(U_{\mathcal{G}},V_{\mathcal{G}}\big):\;&\mathcal{G}\times\widehat{\mathcal{G}}\rightarrow\mathbb{U}\big(L^2(\mathcal{G})\big)\times\mathbb{U}\big(L^2(\mathcal{G})\big),\\ \nonumber
	&\big(U_{\mathcal{G}}(z)f\big)(x):=f(z\circ{x}),\ \big(V_{\mathcal{G}}(\zeta)f\big)(x):=\zeta(x)f(x),\  \forall f\in L^2(\mathcal{G},dx),
\end{align}
where each $\zeta\in\widehat{\G}$ is a smooth map $\G\rightarrow\mathbb{S}$ and we denote by $\zeta(x)$ its value in $\mathbb{S}\subset\Co$.
\end{definition}
One easily verifies from \eqref{eq:3.1}  that:
 \beq\begin{split}
\big(U_{\mathcal{G}}(z)V_{\mathcal{G}}(\zeta)U_{\mathcal{G}}(z^{-1})V_{\mathcal{G}}(\zeta^{-1})f\big)(x)&=\zeta(z\circ{x})\big(U_{\mathcal{G}}(z^{-1})V_{\mathcal{G}}(\zeta^{-1})f\big)(z\circ{x})\\
&=\zeta(z\circ{x})\zeta(x)^{-1}f(x)=\zeta(z)f(x)\,,
 \end{split}\eeq 
and thus the above unitary operators satisfy the commutation relation:
\beq
U_{\mathcal{G}}(z)V_{\mathcal{G}}(\zeta)U_{\mathcal{G}}(z^{-1})V_{\mathcal{G}}(\zeta^{-1})=\zeta(z)\bb1_{L^2(\mathcal{G})}\,.
\eeq

 \begin{definition} Given a Weyl system $(U_{\G},V_{\G}):\G\times\widehat{\G}\rightarrow\mathbb{U}\big(L^2(\G)\big)$ we call \emph{projective unitary Weyl representation} associated with it, a map $W_{\G}:\G\times\widehat{\G}\rightarrow\mathbb{U}\big(L^2(\G)\big)$
verifying the identities:
\beq\label{F-W-prepr}\begin{split}
&W_{\G}(x,1)=U_{\G}(x),\ W_{\G}(1,\xi)=V_{\G}(\xi),\\ &W_{\G}(x,\xi)W_{\G}(y,\eta)=\xi(y)^{-1}\,\eta(x)W_{\G}(y,\eta)W_{\G}(x,\xi)\,.
\end{split}\eeq
\end{definition}

We have two Weyl systems: the one associated with  the dual pair $(\Rd,\Rd)$ and the one associated with  the dual pair $(\Rd/\Zd,\Zd)$. The presence of these two Weyl systems in the study of the Bloch-Floquet theory has been observed  in \cite{dNL-11} and we intend to give a precise description of their relation in this context.

A key result in harmonic analysis defines for any pair of dual groups $\big(\mathcal{G},\widehat{\mathcal{G}}\big)$, an abstract Fourier-Pontryagin transformation (see Sections 4.1 and 4.2 in \cite{Fo-AHA}):
\beq
\big(\mathcal{F}_{\mathcal{G}}F\big)(\eta):=\int_{\mathcal{G}}dx\,\eta(x)\,F(x),\quad\forall F\in L^1(\mathcal{G}),\ \forall\eta\in\widehat{\mathcal{G}}\,,
\eeq
that extends to a unitary operator from $L^2(\mathcal G)$ onto $L^2(\widehat{\mathcal G})$.  We denote its inverse by $\mathcal{F}_{\widehat{\mathcal{G}}}$.

For each unitary Weyl representation of a Weyl pair $\big(\mathcal{G},\hat{\mathcal{G}}\big)$ one can construct a functional calculus for any continuous, compactly supported function $\Phi\in C_c(\mathcal{G}\times\widehat{\mathcal{G}})$, defining:
\beq
\Op_{\mathcal{G}}(\Phi):=\int_{\mathcal{G}\times\widehat{\mathcal{G}}}dx\,d\xi\,\big([\mathcal{F}_{\mathcal{G}}\otimes\mathcal{F}_{\hat{\mathcal{G}}}]\Phi\big)(x,\xi)\,W_{\mathcal{G}}(x,\xi)\in\mathbb{B}\big(L^2(\mathcal{G})\big).
\eeq

In defining a unitary Weyl representation for a Weyl system $\big(\mathcal{G},\widehat{\mathcal{G}}\big)$, an important aspect is to choose an order for the two unitaries $U_\G(z)$ and $V_\G(\zeta)$, or equivalently a phase function in front of the product $U_\G(z)\,V_\G(\zeta)$. Having in mind the Weyl calculus on $\Rd$ that we discussed briefly in the Introduction (see \eqref{F-10} and \eqref{F-W-R-prepr}) we would like to have a kind of "symmetric" unitary Weyl representation satisfying the properties:
\begin{align}\label{F-B-2}
	&W_{\mathcal{G}}(x,\xi)W_{\mathcal{G}}(y,\eta)=\sqrt{\xi(y)^{-1}\,\eta(x)}\, W_{\mathcal{G}}(x\circ{y},\xi\circ\eta) \\
	&\Op_{\mathcal{G}}(\Phi)^*=\Op_{\mathcal{G}}(\overline{\Phi}).
\end{align}
For that we would like to define it as$
	"\sqrt{\xi(x^{-1})}U_{\mathcal{G}}(x)V_{\mathcal{G}}(\xi)"$,
but we have to make a smooth choice for the square root and we have to deal now with this question in the case when $ (\mathcal G, \widehat {\mathcal G})=(\Sd,\Zd)$. 

\subsection{The Weyl system of the torus and its symmetric representation}

\subsubsection{The Weyl system.}
Let us apply the above abstract construction to the dual pair $ (\mathcal G, \widehat {\mathcal G})=(\Sd,\Zd)$.  As in \eqref{DF-char-zst}, we work with the isomorphism $\Zd\ni\gamma\mapsto\hat{\chi}_{\gamma}\in\widehat{\Sd}$ given explicitely by
\beq
\hat{\chi}_{\gamma}(\bz):=\bz^{-\gamma}=\underset{1\leq j\leq d}{\bigprod}\,\z_j^{-\gamma_j}\in\mathbb{S}
\eeq
and we have the unitary representations:
\begin{align}\label{DF-TW-syst}
	&\big(U_{\T},V_{\T}\big):\T\times\Gamma_*\rightarrow\mathbb{U}\big(L^2(\T)\big)\times\mathbb{U}\big(L^2(\T)\big),\\ \nonumber
	&\quad\big(U_{\T}(\bz^\circ)\mf\big)(\bz):=\mf(\bz^\circ\bz),\ \big(V_{\T}(\gamma^*)\mf\big)(\bz):=\bz^{-\gamma^*}{\mf}(\bz)\equiv\Big[\underset{1\leq j\leq d}{\prod}\z_j^{-\gamma^*_j}\Big]\mf(\bz),\  \forall\mf\in L^2(\T,d\bz).
\end{align}
We notice the following commutation relations:
\begin{align}
	\big(U_{\T}(\mbz)V_{\T}(\gamma^*)\mf\big)(\bz)&=\hat{\chi}_{\gamma^*}(\mbz\,\bz)\,\mf(\mbz\,\bz)=\hat{\chi}_{\gamma^*}(\mbz)\big(V_{\T}(\gamma^*)U_{\T}(\mbz)\mf\big)(\bz).
\end{align}

\paragraph{The generators.} Let us fix any $\mf\in{C}^\infty(\T)$ and $\mbz\in\T$ and consider the 1-parameter unitary group:
\beq
\R\ni{t}\mapsto\p\big(t\s(\mbz)\big)\in\T
\eeq
\beq
\big[(U_{\T}\circ\p)\big(t\s(\mbz)\big)\mf\big](\bz)=\mf\big[\p\big(t\s(\mbz)\big)\bz\big]=(\mf\circ\p)\big(t\s(\mbz)+\s(\bz)\big)
\eeq
having the generator:
\begin{align*}
&\partial_t\Big|_{t=0}\big[(U_{\T}\circ\p)\big(t\s(\mbz)\big)\mf\big](\bz)=\partial_t\Big|_{t=0}\big[(\mf\circ\p)\big(t\s(\mbz)+\s(\bz)\big)\big]\\
&\quad=\underset{1\leq j\leq d}{\sum}\s(\mbz)_j\big[\partial_j(\mf\circ\p)\big]\big(\s(\bz)\big)=\underset{1\leq j\leq d}{\sum}\s(\mbz)_j\big(\mathring{\partial}_j\mf\big)(\bz).
\end{align*}

For the representation $V_{\T}:\Gamma_*\rightarrow\mathbb{U}\big(L^2(\T)\big)$ we have the discrete generator:
\beq
\forall{j}\in\{1,\ldots,d\}:\quad\big[[V_{\T}(\varepsilon_j)-\bb1]\mf\big](\bz)=[\z_j^{-1}-1]\mf(\bz)
\eeq
and recalling \eqref{F-B-1} we obtain:
\beq
[V_{\T}(\varepsilon_j)-\bb1]=\mathcal{F}_{\Gamma_*}\dd_j\mathcal{F}_{\T}.
\eeq

\subsubsection{The symmetric representation.}

As discussed in connection with formula \eqref{F-B-2}, we need to work with a "smooth" version of the square root $\sqrt{\hat{\chi}_{1,\gamma^*}(\bz)}=e^{-(i/2)<\gamma^*,\s(\bz)>}$ and the arguments in the paragraph above suggest to replace $\bz\in\T\cong\Sd$ by $\tilde{\bz}\in$ and $e^{-(i/2)<\gamma^*,\s(\bz)>}$ by $e^{-i<\gamma^*/2,\s_2(\tilde{\bz})>}$ so that $\bz=\tilde{\p}_2(\tilde{\bz})=\tilde{\bz}^2$ and
\beq
\big[e^{-(i/2)<\gamma^*,\s_2(\tilde{\bz})>}\big]^2=e^{-i<\gamma^*,\s(\bz)+\kappa(\tbz)>}=e^{-i<\gamma^*,\s(\bz)>}.
\eeq

We propose to use the Riemann surface of $\z\mapsto\sqrt{\z}$ and consider a two-fold cover of $\Sb$:
\beq\label{D-Stilde2}
\widetilde{\mathbb{S}}_2:=\big\{(\z,\z^2)\in[\mathbb{C}\setminus\{0\}]^2,\,\forall\z\in\mathbb{S}\big\},\quad\mathfrak{r}_2:\widetilde{\mathbb{S}}_2\ni(\z,\z^2)\mapsto\z^2\in\mathbb{S},
\eeq
as the $\Z_2$-principal bundle $\mathfrak{r}_2:\widetilde{\mathbb{S}}_2\repi\mathbb{S}$ over $\mathbb{S}$,
having the fibre $\mathfrak{r}_2^{-1}(\mathring{\z})=\{\pm\sqrt{\mathring{\z}}\}\cong\Z_2$. We notice that although $\widetilde{\mathbb{S}}_2\approx\mathbb{S}\times\Z_2$ as sets, topologically $\widetilde{\mathbb{S}}_2$ is homeomorphic with $\mathbb{S}$. 

Let $[2\Zd]$ be the lattice in $\Rd$ based on
the vectors
$\{2\e_1,\ldots,2\e_d\}$ and consider the associated quotient group $\Sd_2:=\Rd/[2\Zd]$. Notice that we have the following explicit form of the 
canonical quotient projection $\p_2:\Rd\repi\mathbb{S}^d_2$:
$
\p_2(x)\,=\,\big(e^{i\pi x_1},\ldots,e^{i\pi x_d}\big)\in\mathbb{S}^d_2
$. The evident injection $[2\Zd]\subset\Zd$, as even integers, allows us to write the following diagram:
\beq\label{BD-2}
\dgARROWLENGTH=0.5em
\begin{diagram}
	\node{} \node{} \node{\Z_2^d} \arrow{s,r}{\hookrightarrow} \\
	\node{[2\Zd]} \arrow{s,l}{\hookrightarrow} \arrow{e,t}{\hookrightarrow}
	\node{\Rd} \arrow{s,l,=}{} \arrow{e,t}{\p_2} \node{\Sd_2}  \arrow{s,r}{\tilde{\p}_2}\\
	\node{\Zd}  \arrow{e,t}{\hookrightarrow} \arrow{s,l}{\p_2} \node{\Rd} \arrow{e,t}{\p} \node{\Sd}\\
	\node{\Z_2^d}
\end{diagram}\eeq
where $\tilde{\p}_2\circ\p_2=\p$
and thus:
$
\tilde{\p}_2(\bz)=\bz^2,
$ and
\beq
\ker\tilde{\p}_2=\p_2(\Zd)=\Zd/[2\Zd]\cong\Z_2^d\subset\Sd_2\,.
\eeq 
In conclusion, we may identify the surjective homomorphism $\tilde{\p}_2:\Sd_2\repi\Sd$ with the principal bundle $\mathfrak{r}_2^d:\widetilde{\mathbb{S}}_2^d\repi\mathbb{S}^d$ defined by the product of $d$ copies of the Riemann surface of the square-root function.
Let us also define the associated discontinuous section:
\beq\label{F-s2}
\s_2:\,\Sd_2\rightarrow\Rd,\quad\text{such that}\quad\p_2\circ\s_2=\Id_{\Sd_2}.
\eeq

Let us notice that $\s_2(\Sd_2)$ is equal to the unit cell of the lattice $[2\Zd]$ with respect to the decomposition $\Rd\ni{x}\mapsto\iota_2(x)+\tilde{x}$ with $\iota_2(x):=\iota(x/2)\in[2\Zd]$ and $\tilde{x}\in[2\E]=[-1,1)^d$.
If we denote by $\tilde{x}=(\tilde{x}_1,\ldots,\tilde{x}_d)\in\Rd$ the points in $\s_2(\Sd_2)=[2\E]\subset\Rd$ and by $\hat{x}=(\hat{x}_1,\ldots,\hat{x}_d)$ the points in $\s(\Sd)=\E\subset\Rd$ we notice that given $\tilde{x}\in\s_2(\Sd_2)\subset\Rd$ there exists a unique decomposition $\tilde{x}=\hat{x}+\kappa$ with $\hat{x}\in\s(\Sd)\subset\Rd$ and $\kappa\in\big(\Zd/[2\Zd]\big)\cong\Z_2^d$. Thus, if we write this decomposition as 
\beq\label{F-par-Sd2}
\s_2(\tilde{\bz})=\mathfrak{j}_2(\tilde{\bz})+\kappa(\tilde{\bz})\quad\text{with}\quad \mathfrak{j}_2(\tilde{\bz})\in\E,\ \kappa(\tilde{\bz})\in\Sigma_1,
\eeq
we can use this parametrization for $\Sd_2$, and notice that:
\beq
(\p\circ\mathfrak{j}_2)(\tilde{\bz})=(\p\circ\s_2)(\tilde{\bz})=(\tilde{\p}_2\circ\p_2\circ\s_2)(\tilde{\bz})=\tilde{\p}_2(\tilde{\bz})=(\tilde{\bz})^2\,\Longrightarrow\,\mathfrak{j}_2(\tilde{\bz})={\s((\tilde{\bz})^2)\,,}
\eeq 

\begin{notation}
Let us denote by $\T_2:=\X/[2\Gamma]$ that is  isomorphic with $\Sd_2$.
\end{notation}
As the two Pontryagin duals of $\T$ and $\T_2$ are isomorphic with $\Zd$, in the following arguments we shall use the following two isomorphisms:
\beq\label{DF-char}\begin{split}
	&\Zd\ni\nu\mapsto\hat{\chi}_{1,\nu}\in\widehat{\Sd},\quad\hat{\chi}_{1,\nu}(\bz):=\bz^{-\nu}=e^{-i<\nu,\s(\bz)>}\,,\\
	&\Zd\ni\nu\mapsto\hat{\chi}_{2,\nu}\in\widehat{\Sd_2},\quad\hat{\chi}_{2,\nu}(\tilde{\bz}):=\tilde{\bz}^{-\nu}=e^{-i<(1/2)\nu,\s_2(\tilde{\bz})>}\,.
\end{split}\eeq

\begin{definition}\label{D-s-tor-W-syst}
We call   \emph{symmetric Weyl representation of  $\T\times\Gamma_*$} the unitary valued smooth map (see also \eqref{DF-char} for the notations):
	\beq\label{DF-s-tor-W-syst}
	 \T_2\times\Gamma_*\ni\,(\tilde{\bz},\gamma^*)\,\mapsto\,\widetilde{\mathbb{W}}_{\T}(\tilde{\bz},\gamma^*):=\hat{\chi}_{2,-\gamma^*}(\tilde{\bz})U_{\T}\big(\tilde{\p}_2(\tilde{\bz})\big)V_{\Gamma_*}(\gamma^*)\in\mathbb{U}\big(L^2(\T)\big).
	\eeq
\end{definition}
\begin{proposition}
The symmetric Weyl system { $\widetilde{\mathbb{W}}_{\T}:\T_2\times\Gamma_*\rightarrow\mathbb{U}\big(L^2(\T)\big)$ given in Definition \ref{D-s-tor-W-syst}} has the following properties:
\beq
\widetilde{\mathbb{W}}_{\T}(\tilde{\bz},\gamma^*)^*=\widetilde{\mathbb{W}}_{\T}(\tilde{\bz}^{-1},-\gamma^*);
\eeq
\beq\label{F-comp-torWsyst}
\widetilde{\mathbb{W}}_{\T}(\tilde{\bz},\alpha^*)\widetilde{\mathbb{W}}_{\T}(\tilde{\bz'},\beta^*)=e^{(i/2)[<\alpha^*,\s_2(\tilde{\bz}')>-<\beta^*,\s_2(\tilde{\bz})>]}\widetilde{\mathbb{W}}_{\T}(\tilde{\bz}\tilde{\bz'},\alpha^*+\beta^*).
\eeq
\end{proposition}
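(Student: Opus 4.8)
The plan is to prove both identities by a direct computation, unwinding Definition~\ref{D-s-tor-W-syst}. By \eqref{DF-char} one has $\hat{\chi}_{2,-\gamma^*}(\tbz)=e^{(i/2)<\gamma^*,\s_2(\tbz)>}$ and $\tilde{\p}_2(\tbz)=(\tbz)^2$, so that $\widetilde{\mathbb{W}}_{\T}(\tbz,\gamma^*)=e^{(i/2)<\gamma^*,\s_2(\tbz)>}\,U_{\T}\big((\tbz)^2\big)\,V_{\Gamma_*}(\gamma^*)$, and everything then reduces to bookkeeping with three ingredients. \textbf{(i)} $U_{\T}$ and $V_{\Gamma_*}$ are unitary representations and $\tilde{\p}_2\colon\Sd_2\to\Sd$ is a group homomorphism, whence $U_{\T}(\mbz)^*=U_{\T}(\mbz^{-1})$, $V_{\Gamma_*}(\gamma^*)^*=V_{\Gamma_*}(-\gamma^*)$, $U_{\T}\big((\tbz)^2\big)U_{\T}\big((\tbz')^2\big)=U_{\T}\big((\tbz\,\tbz')^2\big)$ and $V_{\Gamma_*}(\alpha^*)V_{\Gamma_*}(\beta^*)=V_{\Gamma_*}(\alpha^*+\beta^*)$. \textbf{(ii)} The commutation relation recorded just below \eqref{DF-TW-syst} (with $V_{\Gamma_*}=V_{\T}$), namely $V_{\Gamma_*}(\gamma^*)U_{\T}(\mbz)=\mbz^{\gamma^*}\,U_{\T}(\mbz)V_{\Gamma_*}(\gamma^*)$ with $\mbz^{\gamma^*}=e^{i<\gamma^*,\s(\mbz)>}$; applying it at $\mbz=(\tbz)^2$ and using \eqref{F-par-Sd2} (so that $\s_2(\tbz)$ represents $(\tbz)^2\in\T$ modulo $\Gamma$) yields the key relation $\big((\tbz)^2\big)^{\gamma^*}=e^{i<\gamma^*,\s_2(\tbz)>}$. \textbf{(iii)} Since $\s_2$ is only a section of $\p_2\colon\X\to\T_2$, the elements $\s_2(\tbz\,\tbz')-\s_2(\tbz)-\s_2(\tbz')$ and $\s_2(\tbz^{-1})+\s_2(\tbz)$ lie in $[2\Gamma]$, but $<\gamma^*,2\gamma>\in 4\pi\Z$ for all $\gamma^*\in\Gamma_*$, $\gamma\in\Gamma$, so the half-weight does not see this ambiguity: $e^{(i/2)<\gamma^*,\s_2(\tbz\,\tbz')>}=e^{(i/2)<\gamma^*,\s_2(\tbz)>}e^{(i/2)<\gamma^*,\s_2(\tbz')>}$ and $e^{(i/2)<\gamma^*,\s_2(\tbz^{-1})>}=e^{-(i/2)<\gamma^*,\s_2(\tbz)>}$.

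For the adjoint I would take $\widetilde{\mathbb{W}}_{\T}(\tbz,\gamma^*)^*$ with (i), reaching $e^{-(i/2)<\gamma^*,\s_2(\tbz)>}\,V_{\Gamma_*}(-\gamma^*)\,U_{\T}\big((\tbz)^{-2}\big)$, then commute $V_{\Gamma_*}(-\gamma^*)$ past $U_{\T}\big((\tbz)^{-2}\big)$ via (ii); the compensating factor $\big((\tbz)^{-2}\big)^{-\gamma^*}=e^{i<\gamma^*,\s_2(\tbz)>}$ combines with the leading half-weight into $e^{(i/2)<\gamma^*,\s_2(\tbz)>}$, which by (iii) equals $e^{-(i/2)<\gamma^*,\s_2(\tbz^{-1})>}=\hat{\chi}_{2,\gamma^*}(\tbz^{-1})$. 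As $(\tbz)^{-2}=(\tbz^{-1})^2=\tilde{\p}_2(\tbz^{-1})$, the result is precisely $\widetilde{\mathbb{W}}_{\T}(\tbz^{-1},-\gamma^*)$.

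For the composition I would multiply out $\widetilde{\mathbb{W}}_{\T}(\tbz,\alpha^*)\widetilde{\mathbb{W}}_{\T}(\tbz',\beta^*)$, use (ii) to carry $V_{\Gamma_*}(\alpha^*)$ across $U_{\T}\big((\tbz')^2\big)$, which introduces the scalar $e^{i<\alpha^*,\s_2(\tbz')>}$, and then (i) to merge the two $U_{\T}$-factors into $U_{\T}\big((\tbz\,\tbz')^2\big)$ and the two $V_{\Gamma_*}$-factors into $V_{\Gamma_*}(\alpha^*+\beta^*)$. On the right-hand side of \eqref{F-comp-torWsyst} I would expand $\widetilde{\mathbb{W}}_{\T}(\tbz\,\tbz',\alpha^*+\beta^*)$ by its definition and split $e^{(i/2)<\alpha^*+\beta^*,\s_2(\tbz\,\tbz')>}$ via (iii). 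Comparing the two scalar prefactors, the terms carrying $<\beta^*,\s_2(\tbz)>$ cancel and the remaining ones agree, which is exactly \eqref{F-comp-torWsyst}.

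There is no essential obstacle: both identities boil down to a finite, mechanical phase computation. The only delicate point — and really the reason one works over the two-fold cover $\T_2$ instead of $\T$ itself — is the well-definedness used in (iii): because the section $\s_2$ does not respect the group operation, the intermediate phases mix $\s_2$ of products with $\s_2$ of the individual arguments, and one must check that the half-integer exponent $(1/2)<\gamma^*,\cdot>$ is nonetheless single-valued on $\T_2$ and additive, i.e. that $(1/2)<\Gamma_*,[2\Gamma]>\subset 2\pi\Z$. Keeping the accumulated phases straight is the only thing requiring attention.
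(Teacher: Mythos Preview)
Your proof is correct and follows essentially the same route as the paper: both arguments unwind Definition~\ref{D-s-tor-W-syst}, apply the commutation relation between $U_{\T}$ and $V_{\Gamma_*}$, and then reduce the remaining phase bookkeeping to the fact that $e^{(i/2)<\gamma^*,\cdot>}$ is well-defined and multiplicative on $\T_2$. The only cosmetic difference is that you isolate this last fact abstractly as your ingredient~(iii) (via $\s_2(\tbz\tbz')-\s_2(\tbz)-\s_2(\tbz')\in[2\Gamma]$ and $(1/2)<\Gamma_*,[2\Gamma]>\subset2\pi\Z$), whereas the paper carries out the same verification through the explicit decomposition $\s_2(\tbz)=\s(\tbz^2)+\kappa(\tbz)$ from \eqref{F-par-Sd2} and the observation $e^{(i/2)<\gamma^*,\kappa>}=e^{-(i/2)<\gamma^*,\kappa>}$ for $\kappa\in\Sigma_1$.
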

\begin{proof}
We have the following equalities:
\begin{align*}
	\widetilde{\mathbb{W}}_{\T}(\tilde{\bz},\gamma^*)^*&=\hat{\chi}_{2,\gamma^*}(\tilde{\bz})V_{\Gamma_*}(\gamma^*)^*U_{\T}(\tilde{\bz}^{2})^*=\hat{\chi}_{2,\gamma^*}(\tilde{\bz})\hat{\chi}_{1,-\gamma^*}(\tilde{\bz}^{2})^*U_{\T}(\tilde{\bz}^{2})^*V_{\Gamma_*}(\gamma^*)^*\\
	&=\exp\big((i/2)<\gamma^*,\s_2(\tilde{\bz})>-i<\gamma^*,\s(\tilde{\bz}^2)>\big)U_{\T}(\tilde{\bz}^{-2})V_{\Gamma^*}(-\gamma^*),
\end{align*}
\begin{align*}
e^{(i/2)<\gamma^*,\s_2(\tilde{\bz})>-i<\gamma^*,\s(\tilde{\bz}^2)>}&=e^{(i/2)<\gamma^*,\s(\tilde{\bz}^2)+\kappa(\tilde{\bz})>-i<\gamma^*,\s(\tilde{\bz}^2)>}\\
&=e^{-(i/2)<\gamma^*,\s(\tilde{\bz}^2)>+(i/2)<\gamma^*,\kappa(\tilde{\bz})>}\\
&=e^{-(i/2)<\gamma^*,\s(\tilde{\bz}^2)>-(i/2)<\gamma^*,\kappa(\tilde{\bz})>}\\
&=e^{-(i/2)<\gamma^*,\s_2(\tilde{\bz})>}=\hat{\chi}_{2,\gamma^*}(\tilde{\bz}^{-1}).
\end{align*}
Thus, we have $$\widetilde{\mathbb{W}}_{\T}(\tilde{\bz},\gamma^*)^*=\widetilde{\mathbb{W}}_{\T}(\tilde{\bz}^{-1},-\gamma^*)\,.$$

We also obtain that:
\begin{align*}
	\widetilde{\mathbb{W}}_{\T}(\tilde{\bz},\alpha^*)\tilde{\mathbb{W}}_{\T}(\tilde{\bz'},\beta^*)&=\hat{\chi}_{2,-\alpha^*}(\tilde{\bz})U_{\T}(\tilde{\bz}^2)V_{\Gamma_*}(\alpha^*)\hat{\chi}_{2,-\beta^*}(\tilde{\bz'})U_{\T}(\tilde{\bz'}^2)V_{\Gamma_*}(\beta^*)\\ \nonumber
&=e^{i<\alpha^*,\s(\tilde{\bz'}^2)>}\,e^{(i/2)<\alpha^*,\s_2(\tilde{\bz})>}\,e^{(i/2)<\beta^*,\s_2(\tilde{\bz}')>}\,e^{-(i/2)<\alpha^*+\beta^*,\s_2(\tilde{\bz}\,\tilde{\bz'})>}\widetilde{\mathbb{W}}_{\T}(\tilde{\bz}\tilde{\bz'},\alpha^*+\beta^*)\\ \nonumber
	&=e^{(i/2)[<\alpha^*,\s_2(\tilde{\bz}')>-<\beta^*,\s_2(\tilde{\bz})>]}\widetilde{\mathbb{W}}_{\T}(\tilde{\bz}\tilde{\bz'},\alpha^*+\beta^*).
\end{align*}
\end{proof}

\subsection{The toroidal symmetric quantization}

In our paper, while keeping the same classes of symbols as in \cite{RT-10,RT-06, McL-91}, we consider a 'symmetric' quantization on the torus. Nevertheless, let us mention that the asymptotic expansions  procedure and the other results in \cite{RT-06} allow to prove the equivalence of the two calculi. Moreover, the result in \cite{McL-91} allows to prove the equivalence with the 'local' theory of H\"{o}rmander on the smooth manifold $\T\cong\Sd$.

We shall follow \cite{RT-06, McL-91} and consider the following complex linear spaces. 

	\paragraph{The space $\mathbf{\mathcal{o}\big(\Gamma_*;C^\infty(\T)\big)}$} is the complex linear space of all sequences $\vec{f}:=(\vec{f}_\gamma)_{\gamma\in\Zd}$ of elements $\vec{f}_\gamma\in C^\infty(\T)$, such that for any $\vec{f}\in\mathcal{o}\big(\Gamma_*;C^\infty(\T)\big)$ there exists some $p\geq0$ (depending on $\vec{f}$) verifying the estimations:
	\beq\label{DF-wp}
	\mathcal{w}^{(p)}_{m,n}(\vec{f})\,:=\,\underset{\gamma\in\Gamma_*}{\sup}<\gamma>^{-p}\,\underset{|\alpha|\leq{m}}{\max}\,\nu_n\big((\dd^\alpha\vec{f})_{\gamma}\big)\,<\,\infty,\quad\forall (m,n)\in\mathbb{N}^2.
	\eeq
It is clearly an inductive limit of Fr\'{e}chet spaces. 

	\paragraph{The space $\boldsymbol{\mathcal{o}^p_\rho\big(\Gamma_*;C^\infty(\T)\big)}$} for $p\in\R$ and $\rho=0,1$ is the complex linear space of all sequences $\vec{f}:=(\vec{f}_\gamma)_{\gamma\in\Gamma_*}$ of elements $\vec{f}_\gamma\in C^\infty(\T)$, such that for any $\vec{f}\in\mathcal{o}\big(\Gamma_*;C^\infty(\T)\big)$ and any $\alpha\in\mathbb{N}^d$:
	\beq\label{DF-wp-rho}
	\mathcal{w}^{(p,\rho)}_{m,n}(\vec{f})\,:=\,\underset{\gamma\in\Gamma_*}{\sup}<\gamma>^{-p+m\rho}\,\underset{|\alpha|\leq{m}}{\max}\,\nu_n\big((\dd^\alpha\vec{f})_{\gamma}\big)\,<\,\infty,\quad\forall (m,n)\in\mathbb{N}^2.
	\eeq
	
	\paragraph{The space $\mathbf{\mathcal{s}\big(\Gamma_*;C^\infty(\T)\big)}$} is the complex linear space of all sequences $\vec{f}:=(\vec{f}_\gamma)_{\gamma\in\Gamma_*}$ of elements $\vec{f}_\gamma\in C^\infty(\T)$, such that:
	\beq\label{DF-s}
	\forall (N,m,n)\in\mathbb{N}^3:\quad\mathcal{w}^{(-n)}_{m,n}(\vec{f})\,<\,\infty.
	\eeq
The last two families of spaces are clearly  Fr\'{e}chet spaces.

In view of the form of the symmetric Weyl representation of $\T\times\Gamma_*$ given in Definition \ref{D-s-tor-W-syst} and the fact that the Pontryagin dual of $\T_2$ is canonically isomorphic to $(1/2)\Gamma_{*}\cong\Zd$, one could consider to quantize symbols of class $\mathcal{o}\big((1/2)\Gamma_{*};C^\infty(\T)\big)$ as an analogue of the H\"{o}rmander class $S^\infty_0(\Xi )$, with $\mathcal{s}\big((1/2)\Gamma_{*};C^\infty(\T)\big)$ as family of 'regular' symbols, analogue to $S^{-\infty}(\Xi )$. Nevertheless, considering the isomorphisms described in \eqref{DF-char}, we shall use only $\Gamma_*$-indexed sequences of distributions on the $d$-dimensional torus.

\paragraph{The toroidal Fourier transform.}
Given $\mathring{F}\in \mathcal{o}\big(\Gamma_*;C^\infty(\T)\big)$, we define its 'toroidal Fourier transform'  $\mathcal{F}_{\T\times\Gamma_*}\,\mathring{F}$ as being the following $\Gamma_*$-indexed, rapidly decaying sequence of distributions on the 'double' torus $\T_2$ (with convergence in the weak distribution topology):
\beq\begin{split}
	\big(\mathcal{F}_{\T\times\Gamma_*}\,\mathring{F}\big)_{\alpha^*}(\tilde{\bz}):&=2^{-d}\hspace*{-5pt}\int_{\T}\hspace*{-3pt}d\bz'\hat{\chi}_{1,-\alpha^*}(\bz')\hspace*{-4pt}\underset{\gamma^*\in\Gamma_*}{\sum}\hat{\chi}_{2,\gamma^*}(\tilde{\bz})\mathring{F}_{\gamma^*}(\bz')\\
	&=2^{-d}\hspace*{-5pt}\int_{\T}\hspace*{-3pt}d\bz'e^{i<\alpha^*,\s(\bz')>}\hspace*{-6pt}\underset{\gamma^*\in\Gamma_*}{\sum}\hspace*{-5pt}e^{-(i/2)<\gamma^*,\s_2(\tilde{\bz})>}\,\mathring{F}_{\gamma^*}(\bz'),\quad\forall\tilde{\bz}\in\T_2.
\end{split}\eeq

Given $\mathring{F}\in \mathcal{s}\big(\Gamma^*;C^\infty(\T)\big)$,  we define its \textit{toroidal symmetric quantization} $\Op_{\T}(\mathring{F})$  as the following bounded operator in $L^2(\T)$:
\beq\begin{split}\label{DF-OpTF}
	\Op_{\T}(\mathring{F}):&=2^{-d}\underset{\alpha^*\in\Gamma_*}{\sum}\int_{\Sd_2}d\tilde{\bz}\,\Big(\big(\mathcal{F}_{\T\times\Gamma_*}\mathring{F}\big)_{\alpha^*}(\tilde{\bz})\Big)\,\widetilde{\mathbb{W}}_{\T}(\tilde{\bz},\alpha^*).
\end{split}\eeq
In this case $\big(\mathcal{F}_{\T\times\Gamma_*}\mathring{F}\big)_{\alpha^*}(\tilde{\bz})$ belongs to $\mathcal{s}\big(\Gamma_*;C^\infty(\Sd_2)\big)$ so that both, the series indexed by $\Gamma^*$ and the integral over $\Sd_2$ are absolutely convergent in the topology of $\mathbb{B}\big(L^2(\T)\big)$.

\begin{remark}\label{R-ker-Op-Hsymb}
	 Repeating the arguments in the proof of Theorem 18.1.6 in \cite{Ho-3} on sequences indexed by $\Gamma_*$ instead of H\"{o}rmander symbols allows us to see that given any ${\mathring{F}\in\mathcal{o}\big(\Gamma_*;C^\infty(\T)\big)}$ its symmetric toroidal Weyl quantization is an operator in $\mathcal{L}\big(C^\infty(\T);\mathscr{D}^\prime(\T)\big)$, thus by the Schwartz Kernel Theorem, having a distribution kernel in $\mathscr{D}^\prime(\T\times\T)$.
\end{remark}

 \subsection{The distribution kernel }

Let us compute the integral kernel of the symmetric toroidal Weyl operator $\Op_{\T}(\mathring{F})$ for some $\mathring{F}\in \mathcal{s}\big(\Gamma^*;C^\infty(\T)\big)$; the integrals and series that appear are all absolutely convergent and we shall interchange them as needed in the computations that follow. We can write  for some test function $\varphi\in C^\infty(\T)$, using the isomorphisms \eqref{DF-char} :
\begin{align}\label{F-OpT-Fcirc-vphi}
	\big(\Op_{\T}(\mathring{F})\,\varphi\big)(\bz)&=2^{-2d}\underset{\alpha^*\in\Gamma_*}{\sum}\int_{\T_2}d\tilde{\bz}'\hspace*{-2pt}\underset{\gamma^*\in\Gamma_*}{\sum}\int_{\T}d\bz''\,\mathring{F}_{\gamma^*}(\bz'')\,\times \\ \nonumber
	&\times\,\hat{\chi}_{1,-\alpha^*}(\bz'')\hat{\chi}_{2,\gamma^*}(\tilde{\bz}')\hat{\chi}_{2,-\alpha^*}(\tilde{\bz}')\hat{\chi}_{1,\alpha^*}((\tilde{\bz}')^2\bz)\,\varphi((\tilde{\bz}')^2\,\bz).
\end{align}
Let us analyze the product of characters:
\begin{align}\nonumber
\hat{\chi}_{1,-\alpha^*}(\bz'')\hat{\chi}_{2,\gamma^*}(\tilde{\bz}')\hat{\chi}_{2,-\alpha^*}(\tilde{\bz}')\hat{\chi}_{1,\alpha^*}((\tilde{\bz}')^2\bz)&=\hat{\chi}_{1,-\alpha^*}(\bz'')\hat{\chi}_{2,\gamma^*}(\tilde{\bz}')\hat{\chi}_{2,-\alpha^*}(\tilde{\bz}')\hat{\chi}_{1,\alpha^*}((\tilde{\bz}')^2)\hat{\chi}_{1,\alpha^*}(\bz)\\ \label{F-char}
&\hspace*{-1cm}=\hat{\chi}_{1,-\alpha^*}(\bz'')\hat{\chi}_{1,\alpha^*}(\bz)\hat{\chi}_{2,\gamma^*}(\tilde{\bz}')\hat{\chi}_{2,-\alpha^*}(\tilde{\bz}')\hat{\chi}_{1,\alpha^*}((\tilde{\bz}')^2)\,.
\end{align}
From \eqref{F-char} we deduce that $\hat{\chi}_{1,\alpha^*}\circ\tilde{\p}_2=\hat{\chi}_{2,2\alpha^*}$ and thus we may continue the above calculus obtaining that the product in \eqref{F-char} is equal to:
\begin{align}\label{F-char-2}
\hat{\chi}_{1,-\alpha^*}(\bz'')\hat{\chi}_{1,\alpha^*}(\bz)\hat{\chi}_{2,\gamma^*}(\tilde{\bz}')\hat{\chi}_{2,-\alpha^*}(\tilde{\bz}')\hat{\chi}_{2,2\alpha^*}(\tilde{\bz}')=\hat{\chi}_{1,\alpha^*}(\bz\,(\bz'')^{-1})\hat{\chi}_{2,\alpha^*+\gamma^*}(\tilde{\bz}').
\end{align}
We can now  go  back to \eqref{F-OpT-Fcirc-vphi} and notice that after replacing the above result \eqref{F-char-2}, we have:
\begin{align*}
	&\underset{\alpha^*\in\Gamma_*}{\sum}\int_{\T_2}d\tilde{\bz}'\hspace*{-2pt}\underset{\gamma^*\in\Gamma_*}{\sum}\int_{\T}d\bz''\,\mathring{F}_{\gamma^*}(\bz'')\,e^{i<\alpha^*,\s(\bz'')>}\,e^{-(i/2)<\gamma^*,\s_2(\tilde{\bz}')>}\,e^{(i/2)<\alpha^*,\s_2(\tilde{\bz}')>}\,e^{-i<\alpha^*,\s((\tilde{\bz}')^2\bz)>}\,\varphi((\tilde{\bz}')^2\,\bz)\\
	&\quad=\underset{\gamma^*\in\Gamma_*}{\sum}\underset{\alpha^*\in\Gamma_*}{\sum}\int_{\T_2}d\tilde{\bz}'\hat{\chi}_{2,\alpha^*+\gamma^*}(\tilde{\bz}')\varphi((\tilde{\bz}')^2\,\bz)\int_{\T}\hspace*{-2pt}d\bz''\,\hat{\chi}_{1,\alpha^*}(\bz(\bz'')^{-1})\mathring{F}_{\gamma^*}(\bz'')\\ \numberthis \label{F-60}
	&\quad=\underset{\gamma^*\in\Gamma_*}{\sum}\int_{\T}d\bz''\,\mathring{F}_{\gamma^*}(\bz'')\underset{\alpha^*\in\Gamma_*}{\sum}\,\hat{\chi}_{1,\alpha^*}(\bz(\bz'')^{-1})\int_{\T_2}d\tilde{\bz}'\hspace*{-2pt}\hat{\chi}_{2,\alpha^*}(\tilde{\bz}')\,\hat{\chi}_{2,\gamma^*}(\tilde{\bz}')\varphi((\tilde{\bz}')^2\,\bz)\,.
\end{align*}
We notice that that we may denote $(\tbz^\prime)^2=:\bz_\circ$ and use the decomposition \eqref{F-par-Sd2} in order to write:
\begin{align}
&\int_{\T_2}\hspace*{-2pt}d\tilde{\bz}^\prime\,\hat{\chi}_{2,\alpha^*}(\tilde{\bz}^\prime)\,\hat{\chi}_{2,\gamma^*}(\tilde{\bz}^\prime)\varphi((\tilde{\bz}^\prime)^2\,\bz)=\int_{\T_2}\hspace*{-2pt}d\tilde{\bz}^\prime\,e^{-i<(1/2)(\alpha^*+\gamma^*)\,,\,\s_2(\tbz^\prime)>}\,\varphi((\tilde{\bz}^\prime)^2\,\bz)\\
&\quad=\underset{\kappa\in\Sigma_1}{\sum}\,\int_{\T}\hspace*{-2pt}d\bz_\circ\,e^{-i<(1/2)(\alpha^*+\gamma^*)\,,\,\s(\bz_\circ)+\kappa>}\,\varphi(\bz_\circ\,\bz)\,.
\end{align}
We make now the change of variable $\alpha^*\mapsto\beta^*:=\alpha^*+\gamma^*$ and compute:
\begin{align*}
&\underset{\alpha^*\in\Gamma_*}{\sum}\,\hat{\chi}_{1,\alpha^*}(\bz(\bz'')^{-1})\int_{\T_2}d\tilde{\bz}^\prime\,\hat{\chi}_{2,\alpha^*}(\tilde{\bz}')\,\hat{\chi}_{2,\gamma^*}(\tilde{\bz}')\varphi((\tilde{\bz}')^2\,\bz)=\hat{\chi}_{1,-\gamma^*}(\bz(\bz'')^{-1})\,\times\\
&\qquad\times\,\underset{\beta^*\in\Gamma_*}{\sum}\,\underset{\kappa\in\Sigma_1}{\sum}\,e^{-i<(1/2)\beta^*\,,\,\kappa>}\,e^{-i<\beta^*\,,\,\s(\bz\,(\bz^{\prime\prime})^{-1})>}\int_{\T}\hspace*{-2pt}d\bz_\circ\,e^{-i<\beta^*\,,\,(1/2)\s(\bz_\circ)>}\,\varphi(\bz_\circ\,\bz)\\
&\quad=2^d\hat{\chi}_{1,-\gamma^*}(\bz(\bz'')^{-1})\,(\varphi\circ\p)\big(2s(\bz^{\prime\prime}\,\bz^{-1})+s(\bz)\big)=2^d\hat{\chi}_{1,-\gamma^*}(\bz(\bz'')^{-1})\,\varphi\big((\bz^{\prime\prime})^2\,\bz({-1}\big)
\end{align*}
and finally \eqref{F-60} becomes:
\begin{align*}
&\quad=\underset{\gamma^*\in\Gamma_*}{\sum}\int_{\T}d\bz''\,\mathring{F}_{\gamma^*}(\bz'')\underset{\alpha^*\in\Gamma_*}{\sum}\,\hat{\chi}_{1,\alpha^*}(\bz(\bz'')^{-1})\int_{\T_2}d\tilde{\bz}'\hspace*{-2pt}\hat{\chi}_{2,\alpha^*}(\tilde{\bz}')\,\hat{\chi}_{2,\gamma^*}(\tilde{\bz}')\varphi((\tilde{\bz}')^2\,\bz)\\
&\quad=2^d\underset{\gamma^*\in\Gamma_*}{\sum}\int_{\T}d\bz''\,\mathring{F}_{\gamma^*}(\bz'')\,\hat{\chi}_{1,-\gamma^*}(\bz(\bz'')^{-1})\,\varphi\big((\bz^{\prime\prime})^2\,\bz^{-1}\big)\,.
\end{align*}

 We would like to put into evidence the variable $\bz':=(\bz'')^2\cdot(\bz)^{-1}$ that is the argument of the test function $\varphi\in\mathscr{S}(\T)$ and thus we have to make the change of variables $\bz''\mapsto\bz':=(\bz'')^2\cdot(\bz)^{-1}$. We notice that this map is not bijective, in fact it is surjective but not injective. More precisely, for any $\bz'\in\T$ we have exactly $2^d$ values $\bz''\in\T$ verifying the equation $\bz^\prime=(\bz'')^2\cdot(\bz)^{-1}$ and they can be written as $\bz^{\prime\prime}_\kappa=\p(\kappa/2)\bz_\circ$ with $\kappa\in\Sigma_1$ and $\s(\bz_\circ)=(1/2)[\s(\bz^\prime)-\s(\bz)]$. Thus, denoting by $\mathring{\bz}:=\bz_\circ^2$, Formula \eqref{F-OpT-Fcirc-vphi} becomes:
\begin{align}
	&\big(\Op_{\T}(\mathring{F})\,\varphi\big)(\bz)=2^{-d}\underset{\gamma^*\in\Gamma_{*}}{\sum}\,\underset{\kappa\in\Sigma_1}{\sum}\int_{\T}d\mathring{\bz}\,\mathring{F}_{\gamma^*}(\bz_\kappa\hspace*{-6pt}'')\,e^{-i<\gamma^*,\s(\bz_\kappa\hspace*{-3pt}''\hspace*{2pt}\cdot\bz^{-1})>}\,\varphi\big(\mathring{\bz}\cdot\bz^{-1}\big)\,.
\end{align}
Finally, we can change the integration variable: $\mathring{\bz}\mapsto\bz^\prime:=\mathring{\bz}\bz^{-1}$ and write:
\begin{align}
&e^{-i<\gamma^*,\s(\bz_\kappa\hspace*{-3pt}''\hspace*{2pt}\cdot\bz^{-1})>}=e^{-i<\gamma^*,[\s(\bz_\kappa\hspace*{-3pt}''\hspace*{2pt})-\s(\bz)]>}=e^{-i<\gamma^*,[(1/2)\s(\bz^\prime\bz)+(1/2)\kappa-\s(\bz)]>},\\ \nonumber
&\s(\bz^\prime\bz)=\s(\bz^\prime)+\s(\bz)+\mathfrak{n}(\bz^\prime,\bz),\quad\mathfrak{n}(\bz^\prime,\bz)\in\Sigma_1
\end{align}
and redefining the summation index $\kappa\in\Sigma_1$ to $\kappa^\prime:=\kappa+\mathfrak{n}(\bz^\prime,\bz)$ we finally get:
\begin{align}\label{F-KOpT}
	&\big(\Op_{\T}(\mathring{F})\,\varphi\big)(\bz)=:2^{-d}\int_{\T}d\bz'\,\mathfrak{K}_{\T}[\mathring{F}](\bz,\bz')\,\varphi(\bz')=\\ \nonumber
	&=\int_{\T}d\bz'\,\underset{\gamma^*\in\Gamma_{*}}{\sum}e^{i<(1/2)\gamma^*,\s(\bz)-\s(\bz')>}\Big[\underset{\kappa\in\Sigma_1}{\sum}e^{i<(1/2)\gamma^*,\kappa>}\,\mathring{F}_{\gamma^*}\big(\p[(1/2)(\s(\bz')+\s(\bz)+\kappa)\big)\Big]\varphi\big(\bz'\big).
\end{align}

\begin{remark}
From the above formula \eqref{F-KOpT} it is evident that for any $\mathring{F}\in\mathcal{o}\big(\Gamma_{*};C^\infty(\T)\big)$ we have that $$\Op_{\T}(\mathring{F})^*=\Op_{\T}(\overline{\mathring{F}})\,.$$
\end{remark}

\begin{theorem}
	For $F\in{S}^p_1(\Xi)_{\Gamma_*}$ the BFZ representation of $\Op(F)$ has a smooth decomposition, the fibre operator $\widetilde{\Op}(F)_\xi$ above any point $\xi\in\T_*$ being a symmetric Weyl operator in the sense of \eqref{DF-OpTF} and we have the equality: $\widetilde{\Op}(F)_\xi=\Op_{\T}\big(F_\xi\circ(\s\otimes\mathfrak{j}_{*})\big)$ with $\mathfrak{j}_*:(1/2)\Gamma_*\hookrightarrow\X^*$ the inclusion map (i.e. the symbol $F_\xi$ considered on the $d$-dimensional torus $\T$ and with the second variable restricted to $(1/2)\Gamma_*\subset\X^*$).
\end{theorem}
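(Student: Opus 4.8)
The ``smooth decomposition'' part of the statement is already supplied by Proposition \ref{P-60} (see also Theorem \ref{T-main}), so the plan is to identify, for each fixed $\xi\in\X^*$, the fibre $\widetilde{\Op}(F)_\xi$ with the symmetric toroidal Weyl operator $\Op_{\T}\big(F_\xi\circ(\s\otimes\mathfrak{j}_*)\big)$, and I would do this by comparing distribution kernels. First I would record that $\mathring{F}^\xi:=F_\xi\circ(\s\otimes\mathfrak{j}_*)$ --- i.e. the $\Gamma_*$-indexed sequence $\mathring{F}^\xi_{\gamma^*}(\bz)=F\big(\s(\bz),(1/2)\gamma^*+\xi\big)$ --- is an element of $\mathcal{o}\big(\Gamma_*;C^\infty(\T)\big)$ with H\"ormander-type difference estimates of order $p$: each term lies in $C^\infty(\T)$ by the smoothness and $\Gamma$-periodicity of $F$ in its first variable, and each step-$1$ difference $\dd^\alpha$ is controlled, through a Taylor expansion in the momentum variable, by the momentum derivatives $\partial_\xi^\alpha F\in{S}^{p-|\alpha|}_1(\Xi)$. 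By Remark \ref{R-ker-Op-Hsymb} the operator $\Op_{\T}(\mathring{F}^\xi)\in\mathcal{L}\big(C^\infty(\T);\mathscr{D}^\prime(\T)\big)$ is then well defined, with a kernel in $\mathscr{D}^\prime(\T\times\T)$.

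Next I would establish the identity for the regularized symbols $F^{(N)}$. Since $\theta_N$ has compact support, $\mathring{F}^{(N),\xi}:=F^{(N)}_\xi\circ(\s\otimes\mathfrak{j}_*)$ is a finitely supported $\Gamma_*$-sequence, hence belongs to $\mathcal{s}\big(\Gamma_*;C^\infty(\T)\big)$, so $\Op_{\T}(\mathring{F}^{(N),\xi})$ is the bounded operator \eqref{DF-OpTF} whose kernel is the one computed in \eqref{F-KOpT}. On the other side, by Proposition \ref{P-60} (formula \eqref{DF-OpA-L2T}) together with \eqref{DF-OpABFZ}, the kernel of $\widetilde{\Op}(F^{(N)})_\xi$ is obtained by expanding the Weyl kernel of $\Op(F^{(N)}_\xi)$, splitting the resulting $y$-integral over the $\Gamma$-translates of the cell $\E$, using the $\Gamma$-periodicity of $F^{(N)}$ in its first variable, and applying the Poisson summation formula \eqref{F-Poisson} --- which is exactly the computation carried out while proving point [B] of Theorem \ref{T-main}, with outcome \eqref{F-53}. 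The key step is then to check that, after substituting $\mathring{F}^{(N),\xi}_{\gamma^*}(\cdot)=F^{(N)}\big(\s(\cdot),(1/2)\gamma^*+\xi\big)$, the kernel formula \eqref{F-KOpT} of $\Op_{\T}(\mathring{F}^{(N),\xi})$ coincides with \eqref{F-53}: the oscillating factors $e^{i<(1/2)\gamma^*,\s(\bz)-\s(\bz')>}$ and $e^{i<(1/2)\gamma^*,\kappa>}$ ($\kappa\in\Sigma_1$) are the same in both, while the first argument of the symbol --- $\p\big[(1/2)\s(\bz'\bz)\big]\p(\kappa/2)$ in \eqref{F-KOpT} versus $\big(\s(\bz)+\s(\bz')+\kappa\big)/2$ in \eqref{F-53} --- is reconciled by the $\Gamma$-periodicity of $F^{(N)}$ together with a reindexing of the $\Sigma_1$-sum that absorbs the half-lattice discrepancy between $\s(\bz'\bz)$ and $\s(\bz')+\s(\bz)$. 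Once the several $2^d$-normalisations (from the Poisson summation, from the $\Sigma_1$-sums, and from the Haar-measure convention on $\T_2$) are matched, this gives $\widetilde{\Op}(F^{(N)})_\xi=\Op_{\T}(\mathring{F}^{(N),\xi})$ for all $N\in\Nb$.

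Finally I would pass to the limit $N\to\infty$. On the left, Corollary \ref{C-II-10} gives $\widetilde{\Op}(F^{(N)})_\xi\to\widetilde{\Op}(F)_\xi$ uniformly on bounded subsets, hence in the weak operator topology on $\mathcal{L}\big(C^\infty(\T);\mathscr{D}^\prime(\T)\big)$; on the right, $\mathring{F}^{(N),\xi}_{\gamma^*}=\theta_N\big(\xi+(1/2)\gamma^*\big)\,\mathring{F}^\xi_{\gamma^*}\to\mathring{F}^\xi_{\gamma^*}$ for each $\gamma^*$, and a dominated-convergence argument --- the oscillation of the exponentials in \eqref{F-KOpT} against a smooth test function on $\T\times\T$ producing rapid decay in $\gamma^*$ that dominates the polynomial growth of $\mathring{F}^\xi_{\gamma^*}$ --- shows that the kernels of $\Op_{\T}(\mathring{F}^{(N),\xi})$ converge in $\mathscr{D}^\prime(\T\times\T)$ to that of $\Op_{\T}(\mathring{F}^\xi)$. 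Comparing the two limits yields $\widetilde{\Op}(F)_\xi=\Op_{\T}(\mathring{F}^\xi)=\Op_{\T}\big(F_\xi\circ(\s\otimes\mathfrak{j}_*)\big)$, and the symmetric character of this operator is automatic from \eqref{DF-OpTF} (equivalently, from the identity $\Op_{\T}(\mathring{F})^*=\Op_{\T}(\overline{\mathring{F}})$ recorded right after \eqref{F-KOpT}). I expect the main obstacle to be precisely the bookkeeping in the two middle paragraphs: making every $2^d$-factor in \eqref{F-KOpT} cancel against those in \eqref{F-53}, reconciling the two symbol arguments through the $\Sigma_1$-reindexing, and justifying cleanly the interchange of the $N\to\infty$ limit with the series and integrals on the toroidal side; the underlying algebra is routine once these normalisations are pinned down.
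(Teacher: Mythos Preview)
Your proposal is correct and follows essentially the same route as the paper: verify that the restricted symbol lies in the right toroidal class, compare the two kernel formulas \eqref{F-53} and \eqref{F-KOpT} for the regularized symbols $F^{(N)}$, and then pass to the limit $N\to\infty$. The paper's own proof is terser---it packages the kernel comparison by observing that the $\Phi_{\alpha^*}[F^{(N)}_\xi]$ are toroidal amplitudes in the sense of \eqref{DF-PNf-1} and records the convergence $\Phi_{\cdot}[F^{(N)}_\xi]\to\Phi_{\cdot}[F_\xi]$ in $\mathcal{o}^p_1\big(\Gamma^*;C^\infty(\T\times\T)\big)$---but the substance is the same, and your more explicit treatment of the $\Sigma_1$-reindexing, the $2^d$-normalisations, and the two-sided limit argument is exactly the bookkeeping the paper leaves to the reader.
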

\begin{proof}
The functions $\Phi_{\alpha^*}[F^{(N)}_\xi]$ in \eqref{F-53-2} verify the conditions \eqref{DF-PNf-1} for a toroidal amplitude and thus \eqref{F-53} becomes equivalent to \eqref{F-3-6} and defines a pseudo-differential operator on the $d$-dimensional torus $\T$. Moreover, we may consider the formula \eqref{F-53} in the limit $N\nearrow\infty$ and compare it with \eqref{F-KOpT}. We notice that for any $N\in\Nb$, the formula \eqref{F-53} may be considered as a discrete Fourier transform ${\cal{s}}\big((1/2)\Gamma^*;C^\infty(\T\times\T)\big)$ with $(1/2)\Gamma^*\cong\widehat{\T}_2$. In fact we clearly have $\s(\bz)-\s(\bz^\prime)\in\T_2$. Moreover it is rather easy to verify that $\Phi_{\cdot}[F_\xi]\in{\cal{o}}^p_1\big((1/2)\Gamma^*;C^\infty(\T\times\T)\big)$ and that we have the convergence
\[
\underset{N\nearrow\infty}{\lim}\Phi_{\cdot}[F^{(N)}_\xi]\big(\s(\bz),\s(\bz^\prime)\big)\,=\,\Phi_{\cdot}[F_\xi]\big(\s(\bz),\s(\bz^\prime)\big)\qquad\text{in}\qquad\mathcal{o}^p_1\big((1/2)\Gamma^*;C^\infty(\T\times\T)\big).
\]
\end{proof}

\begin{remark}\label{R-Final}
Let us denote by $\mathfrak{S}^p_1\repi\T_*$ the vector bundle associated to the principal bundle $\X^*\repi\T_*$ and the natural representation of $\Gamma_*$ by translations on the space of sequences ${\cal{o}}^p_1\big((1/2)\Gamma_*;C^\infty(\T)\big)$, given by $\big(\mathring{U}(\gamma^*)\mathring{F}\big)_{\alpha^*}:=\mathring{F}_{\alpha^*+\gamma^*}\in{C}^\infty(\T)$. Then any $F\in{S}^p_1(\Xi)_{\Gamma_*}$ defines a smooth section in $\mathring{F}\in\mathfrak{S}^p_1\repi\T_*$ given by $[\mathring{F}_{\bz^*}]_{(1/2)\gamma^*}(\bz):=F\big(\s(\bz),(1/2)\gamma^*+\s_*(\bz^*)\big)$. We may thus view the BFZ representations of the $\Gamma$-periodic Weyl operators as a Weyl calculus associated to the principal bundle $\X\repi\T$ with symbols of class $\mathfrak{S}^p_1\repi\T_*$ and symmetric Weyl representatiion \eqref{DF-s-tor-W-syst}.
\end{remark}

\noindent{\bf Acknowledgements.} HC acknowledges support from Grant DFF 5281-00046B of the Independent Research Fund Denmark $|$ Natural Sciences. RP acknowledges support from a grant of the Romanian Ministry of Research, Innovation and Digitization, CNCS -UEFISCDI, project number PN-IV-P1-PCE-2023-0264, within PNCDI IV. BH and RP acknowledge support from the CNRS International Research Network ECO-Math. We also thank our home institutions for hosting our reciprocal visits.
\bigskip

\end{document}